\documentclass[11pt]{article}

\usepackage[letterpaper,margin=1in]{geometry}
\usepackage[T1]{fontenc}
\usepackage{amsmath,amssymb,amsthm,mathtools}
\usepackage{microtype}
\usepackage{booktabs,tabularx}

\usepackage[
  backend=biber,
  style=alphabetic,
  maxbibnames=99
]{biblatex}
\usepackage[
  colorlinks=true,
  citecolor=blue,
  linkcolor=blue,
  urlcolor=blue
]{hyperref}
\usepackage{booktabs}
\usepackage{tabularx}
\usepackage{array}
\usepackage{ragged2e}
\newcolumntype{Y}{>{\RaggedRight\arraybackslash}X}
\newcolumntype{P}[1]{>{\RaggedRight\arraybackslash}p{#1}}

\newtheorem{theorem}{Theorem}[section]
\newtheorem{lemma}[theorem]{Lemma}
\newtheorem{proposition}[theorem]{Proposition}
\newtheorem{corollary}[theorem]{Corollary}

\theoremstyle{remark}
\newtheorem{remark}[theorem]{Remark}

\theoremstyle{definition}

\newcommand{\R}{\mathbb{R}}
\newcommand{\F}{\mathcal{F}}

\newcommand{\eps}{\varepsilon}
\newcommand{\HD}{\operatorname{HD}}

\makeatletter
\renewcommand{\@fnsymbol}[1]{%
  \ifcase#1
  \or *
  \or **
  \or ***
  \else
    \@ctrerr
  \fi
}
\makeatother

\title{New Quantitative Bounds for the $(p,q)$-Theorem for Unions of Convex Sets}

\author{
  Chaya Keller%
  \thanks{School of Computer Science, Ariel University, Israel.
  \texttt{chayak@ariel.ac.il}.}
  \and
  Shakhar Smorodinsky%
  \thanks{Institute for the Theory of Computing,
  Faculty of Computer and Information Science,
  Ben-Gurion University of the Negev,
  Be'er Sheva 84105, Israel.
  \texttt{shakhar@bgu.ac.il}.}
}

\date{}

\begin{document}
\maketitle

\begin{abstract}
A set in $\R^d$ is called $s$-convex if it is the union of at most $s$ convex sets. 
A family of sets $\F$ is said to satisfy the $(p,q)$ property if among any $p$ sets in $\F$, some
$q$ have a non-empty intersection. Let $\HD_d^{(s)}(p,q)$ denote the minimum number of points needed to pierce a finite family of $s$-convex sets that satisfies the $(p,q)$-property. Alon and Kalai (1995) proved that $\HD_d^{(s)}(p,q)$ exists for any $p \geq q \geq d+1$ and any $s \geq 1$, but the quantitative bounds they obtained are very loose.

We present several improved upper and lower bounds, for a general $d$ and in the well-studied setting of families of $s$-intervals of the line (i.e., $\HD_1^{(s)}(p,q)$). In particular, we prove the following:

\noindent (i) For every $d\ge2$, $s \geq 1$ and $\delta>0$, if $p>q$ and $q\ge C_d\log(e sp)$, then
$\HD_d^{(s)}(p,q)
\le
p-q+1
+
O_{d,\delta}((s \cdot \tfrac{p}{q} \cdot \log \tfrac{esp}{q})^{\rho_d+\delta}),$
where $\rho_d<d$ is the exponent in the weak epsilon-net theorem of Rubin (2022). 

\noindent (ii) For $s \geq 1$, $p \geq q \geq 2$ and $q\ge C_0s\log(2s)\log(ep)$, $p-q+s \leq \HD_1^{(s)}(p,q) \leq p-q+2s+1$. This result, whose proof uses the bootstrapping technique of Keller, Smorodinsky and Tardos (2018), the improved bound of Zerbib (2019) on $\HD_1^{(s)}(p,p)$ and geometric methods, provides the first near-tight estimate for $\HD_1^{(s)}(p,q)$ for $q>2$. 

\noindent (iii) For any fixed $s$, there are an integer $\kappa_s\in\{s,\ldots,2s\}$ and constants $C_s,p_s>0$ such that, whenever $p\ge p_s$ and $q\ge C_s\log(ep)$,
$ 
\HD_1^{(s)}(p,q)\in\{p-q+\kappa_s,\;p-q+\kappa_s+1\}.
$
\noindent Interestingly, this two-value concentration result holds, although the exact value of the threshold remains unknown.

\noindent (iv) For any $s \geq 1$, $\HD_3^{(s)}(p,4) \geq sp^{2-o(1)}$. 
Already in the classical setting of families of convex sets, this significantly improves the best known lower bound on $\HD_d^{(1)}(p,d+1)$, for all $d \geq 3$.
\end{abstract}

\section{Introduction}
\subsection{Background}

\paragraph{Helly's theorem and the $(p,q)$ problem.}
Helly's theorem states that if $\mathcal F$ is a finite family of convex sets in $\R^d$ and every $d+1$ members of $\mathcal F$ have a nonempty intersection, then $\bigcap_{F\in\mathcal F}F\neq\emptyset$. More generally, the Helly number of a class of sets is the smallest integer $h$, if it exists, such that every finite family from the class, whose subfamilies of size at most $h$ have nonempty intersection, has a nonempty total intersection. Thus, the Helly number of convex sets in $\R^d$ is $d+1$.

Let $p\ge q$ be positive integers. A family $\mathcal F$ with at least $p$ members satisfies the $(p,q)$-property if among every $p$ members of $\mathcal F$, some $q$ have a nonempty intersection. A transversal, or a piercing set, for $\mathcal F$ is a set of points meeting every member of $\mathcal F$, and $\tau(\mathcal F)$, the transversal number of $\mathcal F$, denotes the minimum size of such a set.

Hadwiger and Debrunner~\cite{HadwigerDebrunner57} asked whether, for every $p\ge q\ge d+1$, the $(p,q)$-property forces a bounded transversal for families of compact convex sets in $\R^d$. In their celebrated $(p,q)$-theorem, Alon and Kleitman~\cite{AlonKleitman92} proved that it does: for every such $d,p,q$, there is an integer depending only on $d,p,q$ that bounds the minimal size of transversal for every family of compact\footnote{The theorem is often stated for compact convex sets. For finite families, compactness is inessential.} convex sets satisfying the $(p,q)$-property. We denote the least such integer by $\HD_d(p,q)$ and call it the Hadwiger--Debrunner number. A simple construction gives the universal lower bound $\HD_d(p,q)\ge p-q+1$. Hadwiger and Debrunner~\cite{HadwigerDebrunner57} proved that this lower bound is attained whenever $q>\frac{d-1}{d}p+1$, namely, $\HD_d(p,q)=p-q+1$ throughout this range. Determining the quantitative behavior of these numbers outside this range remains a central problem in discrete geometry; see~\cite{KST18,KS21,RocheNewton26} for the best known upper and lower bounds.


\paragraph{Unions of convex sets.}
Unions of a bounded number of convex sets form one of the basic extensions of convexity. A set in $\R^d$ is called $s$-convex if it is the union of at most $s$ convex sets. For $s>1$, Larman~\cite{Larman68} showed that the class of all $s$-convex sets has no finite Helly number in general. This failure motivated a substantial line of work identifying additional assumptions under which Helly-type conclusions can be obtained. In particular, positive results are known when one controls the structure or topology of all intersections; see the results of Amenta~\cite{Amenta96}, Matou\v{s}ek~\cite{Matousek97}, and Kalai and Meshulam~\cite{KalaiMeshulam08}.

Despite the absence of an ordinary Helly theorem, strong piercing results remain possible. Alon and Kalai~\cite{AlonKalai95} proved a qualitative $(p,q)$-theorem for unions of convex sets: for fixed $d,s,p,q$, with $p\ge q\ge d+1$, every finite family of unions of at most $s$ compact convex sets satisfying the $(p,q)$-property has a transversal of bounded size. We write $\HD_d^{(s)}(p,q)$ for the least integer that pierces every finite family of nonempty $s$-convex sets in $\R^d$ satisfying the $(p,q)$-property.

Unions of convex sets have recently also been studied from the Radon and Tverberg point of view, both in the context of abstract separable spaces \cite{AlonSmorodinsky26} and in the Euclidean setting \cite{AlonSmorodinsky26,GeShuXu26}.


\paragraph{Unions of convex sets in dimension 1.}
The one-dimensional case, where $s$-convex sets are unions of at most $s$ intervals, has a rich theory of its own. Kaiser~\cite{Kaiser97} proved that every family of $s$-intervals satisfying the $(p,2)$-property can be pierced by at most $(s^2-s+1)(p-1)$ points. Matou\v{s}ek~\cite{Matousek01} constructed separated $s$-interval families satisfying the $(p,2)$-property for which at least $c s^2(p-1)/(\log s)^2$ piercing points are needed. Thus the quadratic dependence on $s$ was known up to a polylogarithmic factor. Alon~\cite{Alon98} later gave a short elementary proof of the comparable upper bound $2s^2(p-1)$.

For $q>2$, no comparably sharp picture was known. Zerbib~\cite{Zerbib19} proved the general upper bound $O_{p,q}(s^{q/(q-1)})$, where the dependence on $p$ is polynomial. Her paper also contains an elegant topological theorem, based on Komiya's theorem, for separated $s$-intervals with the $(p,p)$-property, but no matching lower bound was known.

\paragraph{Fractional Helly theorem and weak $\eps$-nets.}
The proof of the Alon--Kleitman theorem \cite{AlonKleitman92}, as well as the extension of Alon and Kalai \cite{AlonKalai95}, is built around two geometric ingredients: the fractional Helly theorem and the weak $\eps$-net theorem.

The fractional Helly theorem of Katchalski and Liu~\cite{KatchalskiLiu79} is a density version of Helly's theorem. It states that for every $d$ and every $\alpha>0$, there is $\beta_d(\alpha)>0$ such that if at least $\alpha\binom{n}{d+1}$ of the $(d+1)$-tuples in a family of $n$ convex sets in $\R^d$ have nonempty intersection, then some point belongs to at least $\beta_d(\alpha)n$ members of the family. The elementary lexicographic proof already gives the useful linear estimate $\beta_d(\alpha)\ge \alpha/(d+1)$.

Let $P$ be a finite multiset of points in $\R^d$. A weak $\eps$-net for $P$ is a set $N\subseteq\R^d$, such that $N$ meets every convex set containing at least $\eps|P|$ points of $P$. We denote by $f_d(\eps)$ the smallest size that can always be guaranteed. Alon, B\'ar\'any, F\"uredi and Kleitman~\cite{AlonBFK92} proved that $f_d(\eps)$ is finite for every fixed $d$ and $\eps>0$. The classical general estimate is $f_d(\eps)=\widetilde O_d(\eps^{-d})$ for $d\ge2$; see~\cite{AlonBFK92,ChazelleEtAl95,MatousekWagner04}. Rubin~\cite{Rubin21,Rubin22} improved the exponent in every dimension $d\ge2$. Define
\[
\rho_d=
\begin{cases}
3/2, & d=2,\\
2.558, & d=3,\\
3.48, & d=4,\\
\bigl(d+\sqrt{d^2-2d}\bigr)/2, & d\ge5.
\end{cases}
\]
Then, for every $\delta>0$, one has $f_d(\eps)=O_{d,\delta}(\eps^{-(\rho_d+\delta)})$. We use these estimates throughout the paper. In dimension one, intervals admit weak $\eps$-nets of size $O(1/\eps)$, which is obviously optimal.

The Alon--Kleitman argument combines these two results as follows. A double-counting argument first produces many intersecting $(d+1)$-tuples. Fractional Helly then gives a point contained in a positive fraction of the sets. Linear-programming duality produces a finite multiset of points such that every member of the original family contains a fixed positive fraction of them. A weak $\eps$-net for this multiset is then a transversal for the original family. 

Keller, Smorodinsky and Tardos~\cite{KST18} sharpened the quantitative part of this scheme by counting intersecting $q$-tuples more efficiently and by applying a dichotomy method. In particular, for every fixed $d$ and every $\eps>0$, they proved that, for all sufficiently large $p$ and every $q\ge p^{(d-1)/d+\eps}$, one has $p-q+1\le \HD_d(p,q)\le p-q+2$. 

\subsection{Our results}

We obtain improved upper and lower bounds on the numbers $\HD_d^{(s)}(p,q)$ in several regimes, using a large variety of geometric and combinatorial techniques.

\subsubsection{Upper bounds on the $(p,q)$-theorem for a general dimension $d$}

Our first result is a general quantitative $(p,q)$-theorem for families of $s$-convex sets, which improves over direct reduction to all convex components by essentially a factor $s^{\rho_d}$.

\begin{theorem}\label{thm:q=d+1}
For every fixed $d\ge2$, every $\delta>0$, and all $s\ge1$ and $p\ge d+1$, one has $$\HD_d^{(s)}(p,d+1)=O_{d,\delta}((s^{d+1}p^d)^{\rho_d+\delta}).$$
\end{theorem}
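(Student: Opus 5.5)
We follow the Alon--Kleitman scheme, applied to the family of convex components while keeping the linear-programming step at the level of the $s$-convex sets themselves. This way the weak $\eps$-net is taken only once, for an $\eps$ that is polynomial in $1/(sp)$. Let $\F=\{A_1,\dots,A_n\}$ be a finite family of $s$-convex sets with the $(p,d+1)$-property, and write each $A_i=\bigcup_{j\le s}C_{i,j}$ with $C_{i,j}$ convex. Fix a weight function $w$ on $\F$, which we may take rational and hence realize as a multiset $\F_w$ of size $m$. We will show that some point lies in at least $\beta m$ members of $\F_w$ (counted with multiplicity), where $\beta=\Omega_d(s^{-(d+1)}p^{-d})$.

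\textbf{Fractional step.} Among any $p$ members of $\F_w$, some $d+1$ intersect. Since a multiset containing many copies of a single set trivially has such a point, we may assume the copies are handled by the usual argument. Standard double counting then gives at least $\binom{m}{d+1}/\binom{p}{d+1}\gtrsim p^{-(d+1)}\binom{m}{d+1}$ intersecting $(d+1)$-tuples of members of $\F_w$. Each intersecting tuple $(A_{i_0},\dots,A_{i_d})$ is witnessed by some choice of components $(C_{i_0,j_0},\dots,C_{i_d,j_d})$ with nonempty intersection.

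Pass to the family $\mathcal C$ of all $sm$ components, with multiplicity. The number of intersecting $(d+1)$-tuples of components from distinct sets is at least $p^{-(d+1)}\binom{m}{d+1}\approx s^{-(d+1)}p^{-(d+1)}\binom{sm}{d+1}$. By the fractional Helly theorem with the linear estimate $\beta_d(\alpha)\ge\alpha/(d+1)$, some point lies in at least $\frac{1}{d+1}s^{-(d+1)}p^{-(d+1)}\cdot sm$ components. Since the components of one set contribute at most $s$ to this count, that point lies in at least $\Omega_d(s^{-(d+1)}p^{-(d+1)})\,m$ sets.

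This is a factor of $p$ worse than the claimed bound. To recover it, I would count intersecting $(d+1)$-tuples as in \cite{KST18}. Any $p$ sets contain an intersecting $(d+1)$-tuple, so the $(d+1)$-uniform hypergraph of non-intersecting tuples has independence number less than $p$. A Tur\'an-type count (for instance a greedy or random-ordering argument) then gives density of intersecting tuples of order $p^{-d}$ rather than $p^{-(d+1)}$. The resulting $\beta$ is $\Omega_d(s^{-(d+1)}p^{-d})$ for general $m$ with $m\ge p$. I expect this counting step to be the main obstacle: the exponent of $p$ must be exactly $d$, and the bound must stay valid for multisets.

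\textbf{LP duality and net.} The fractional step gives $\tau^*(\F)\le 1/\beta$. By LP duality, there is a finite multiset $P$ of points such that every $A_i$ contains at least $\beta|P|/1$ of them; for the optimal fractional transversal this is at least a $\beta$-fraction. Since $A_i$ is a union of $s$ convex sets, some component $C_{i,j}$ contains at least $(\beta/s)|P|$ points of $P$. A weak $(\beta/s)$-net for $P$ (with respect to convex sets) therefore pierces every $A_i$. By Rubin's bound its size is $O_{d,\delta}((s/\beta)^{\rho_d+\delta})=O_{d,\delta}((s^{d+2}p^d)^{\rho_d+\delta})$.

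The leftover factor of $s$ must be absorbed. I would do this by running the fractional step directly on components, weighting each component by $w(A_i)/s$ and dualizing over components. Alternatively, $\beta$ can be measured per component, so that the point found lies in a $\beta'$-fraction of the component mass with $\beta'=\Omega(s^{-d}p^{-d})$, because fractional Helly on $sm$ components loses only $s^{-(d+1)}$ relative to the component count. Then $s/\beta'=O(s^{d+1}p^d)$, which gives the stated bound $O_{d,\delta}((s^{d+1}p^d)^{\rho_d+\delta})$.
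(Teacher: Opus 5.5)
Your argument works, but it takes a different route from the paper, and its decisive last step is only gestured at.

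\textbf{The paper's route.} The paper never passes to the family of components for the depth estimate. Its lexicographic certificate count (Lemma~\ref{lem:count}) charges every intersecting $(d+1)$-tuple of \emph{sets} to one of at most $s^d\binom{n}{d}$ certificates, each of which fixes a single point. Combined with de Caen's Tur\'an bound, this yields a point in an $\Omega_d(s^{-d}p^{-d})$ fraction of the \emph{sets}. De Caen's bound is exactly the $p^{-d}$ density you flagged; the paper applies it to the labelled multiset, which has the $((p-1)d+1,d+1)$-property, after scaling multiplicities so that $m\ge 2((p-1)d+1)$. The ordinary LP duality of Lemma~\ref{lem:lp} and one averaging over components then give $\eps=\Omega_d(s^{-(d+1)}p^{-d})$.

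\textbf{Your route.} You run the classical linear fractional Helly theorem on the multiset of components. This guarantees only $\Omega_d(s^{-(d+1)}p^{-d})m$ \emph{sets} through the point, which is why your first pass ends at $s^{d+2}$. It does, however, guarantee $X\ge c_ds^{-d}p^{-d}m$ \emph{components} through the point. You recover the lost factor $s$ by letting the duality see component counts.

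\textbf{Making the last step precise.} Let $c_i(x)$ be the number of components of $A_i$ containing $x$. You have shown that for every weighting $w$ of $\F$ some $x$ satisfies $\sum_i w_ic_i(x)\ge\beta'\sum_iw_i$, with $\beta'=c_ds^{-d}p^{-d}$. Apply the minimax theorem to the finite game whose rows are the $A_i$ and whose columns are one representative point for each realized membership pattern in the components. It yields a rational distribution, hence a finite multiset $P$, with $\sum_j|P\cap C_{i,j}|\ge\beta'|P|$ for every $i$. Some component of $A_i$ then holds $\beta'|P|/s$ points, and $\eps=\beta'/s$ gives the bound.

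Two points in your wording need correcting:
\begin{itemize}
\item $\beta'$ is a number of components per unit of \emph{set} mass. As a fraction of the component mass (with your weights $w(A_i)/s$), the point covers only $\beta'/s$.
\item ``Dualizing over components'' must not mean a fractional transversal of the component family, since arbitrary component weightings have no depth guarantee. For example, let every $C_{i,1}$ pass through one point and the $C_{i,2}$ be pairwise disjoint. Then $\F$ has every $(p,q)$-property, yet weight placed on the $C_{i,2}$ has depth $1$. The weights must stay tied to the sets, as in your $w(A_i)/s$.
\end{itemize}

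\textbf{Trade-off.} Your route needs only classical fractional Helly plus a mild payoff-weighted version of Lemma~\ref{lem:lp}. The paper's route proves the stronger set-level depth of Theorem~\ref{thm:fractional-helly}, which is of independent interest and is the form reused in Lemma~\ref{lem:depth} for general $q$.
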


For larger values of $q$, we prove a significantly stronger bound using the $(p,q)$ dichotomy method of~\cite{KST18}. Like in Theorem~\ref{thm:q=d+1}, the `error term' added to $p-q+1$ is smaller by essentially a factor $s^{\rho_d}$ than the error term obtained from  direct reduction to all convex components and application of the dichotomy method. 

\begin{theorem}\label{thm:large-q}
For every fixed $d\ge2$ and $\delta>0$, there is a constant $C_d>0$ such that, if $p>q\ge d+1$ and $q\ge C_d\log(e sp)$, then
\[
\HD_d^{(s)}(p,q)\le p-q+1+O_{d,\delta}\!\left(\left(s \cdot \frac{p}{q} \cdot \log \left(\frac{esp}{q}\right)\right)^{\rho_d+\delta}\right).
\]
\end{theorem}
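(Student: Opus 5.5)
The plan is to follow the dichotomy scheme of Keller, Smorodinsky and Tardos~\cite{KST18}, adapted so that only one convex component per set is used in the fractional-Helly and weak-net steps. The error term should then depend on $s$ only linearly inside the power $\rho_d+\delta$.

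Let $\F$ be a family of $s$-convex sets with the $(p,q)$-property. For each $F\in\F$ fix a decomposition $F=\bigcup_{i\le s}K_F^i$ into convex sets. We run the greedy dichotomy. Repeatedly, if some point lies in at least $r$ of the remaining sets, we add it to the transversal and delete the sets it pierces, where $r$ is a threshold of order $q/\log(esp/q)$ to be tuned. Let $k$ be the number of points chosen this way. If $k\ge p-q+1$, a standard argument gives a contradiction. Indeed, from $p-q+1$ heavy points, say $t$ of them, we select $p-q+1$ pierced sets that are pairwise non-co-pierced in a suitable sense, and we pad with $q-1$ further sets. Then among these $p$ sets no $q$ share a point, because a common point of $q$ sets would lie in a component region already heavily covered.

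To make this precise we need the first phase of KST. Among any $p$ sets, at most $q-1$ can be pierced by any single point "outside the heavy points". So, after removing the pierced sets, the remaining family $\F'$ has the property that every point lies in fewer than $r$ of its members. We must show that $\tau(\F')$ is small.

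For $\F'$, a double counting shows that it still satisfies a $(p',q')$-type property. The stronger usable statement is the density one: among any $p$ members of $\F\supseteq\F'$, some $q$ intersect. By Ramsey/supersaturation counting (choosing random $p$-subsets), a fraction of at least roughly $\binom{p}{q}^{-1}$ of the $q$-tuples of $\F'$ intersect. Since each intersecting $q$-tuple contains $\binom{q}{d+1}$ intersecting $(d+1)$-tuples, and each $(d+1)$-tuple of $s$-convex sets intersects through one of at most $s^{d+1}$ component choices, we obtain a fraction of about $(q/p)^{d+1}$ of intersecting $(d+1)$-tuples.

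A direct count is lossy, so instead, for each intersecting $q$-tuple, we fix a common point $x$ and the component of each set containing $x$. With the pigeonhole principle we then get, for a random choice of one component per set (probability $1/s$ each), many intersecting $(d+1)$-tuples of convex sets. Fractional Helly then gives a point in about $\frac{q}{sp}|\F'|$ sets. This works uniformly for every weighting of $\F'$, so LP duality yields a fractional transversal of size $O(sp/q)$.

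The final step uses a weak $\eps$-net with $\eps\approx q/(sp)$, of size $O((sp/q)^{\rho_d+\delta})$. This step needs care, since $s$-convex sets are not convex. The remedy is the random component choice: each set contains a $q/(sp)$ fraction of the multiset inside a single component (by averaging over its $s$ components, losing a factor $s$). Combining this with the threshold $r$ and the requirement $q\ge C_d\log(esp)$ (used in a Chernoff bound for the random-subset counting) produces the $\log(esp/q)$ factor.

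The main obstacle is the counting step. We must show that $\F'$, in which every point has depth less than $r$, still carries an $\Omega(q/(sp))$-fractional-Helly density for every weighting, while losing only one factor of $s$. The first thing to try is the KST argument: sample a random $p$-subset weighted by the LP weights, apply the $(p,q)$-property, and use the low depth to force the intersecting $q$ sets to be spread out. Then we take $\log$-many rounds to boost concentration, which is where the hypothesis $q\ge C_d\log(esp)$ enters.
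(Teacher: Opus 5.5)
Your proposal has a genuine gap at its central step: the claim that the residual family $\F'$ admits, for every weighting, a point of weighted depth $\Omega(q/(sp))$, and hence has fractional transversal number $\tau^*(\F')=O(sp/q)$. This is false.

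Take $q$ convex sets through a point $x_0$ together with $p-q$ further pairwise disjoint convex sets far away. This family has the $(p,q)$-property. Your greedy step removes the cluster at $x_0$, so $\F'$ consists of $p-q$ pairwise disjoint sets and $\tau^*(\F')=p-q$, which can be far larger than $sp/q$. A weak-net argument on $\F'$ would then cost about $(p-q)^{\rho_d+\delta}$, and the additive form $p-q+1+\dots$ is lost.

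The underlying obstruction is that a weighted family (a multiset) inherits only the $((p-1)(q-1)+1,q)$-property. So the guaranteed weighted depth is at best of order $1/p$, not $q/p$: the uniform weight on $p-q+1$ pairwise disjoint members has depth $1/(p-q+1)$. Sampling a $p$-subset according to the LP weights does not help, since the sample may repeat members, and the $(p,q)$-property says nothing about repeated members.

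Your counting also does not produce depth $q/(sp)$. The $(d+1)$-tuple density you obtain is only about $\binom{q}{d+1}/\binom{p}{d+1}\approx(q/p)^{d+1}$, and fractional Helly then gives depth of that order. Moreover, a random choice of one component per set keeps a given intersecting $(d+1)$-tuple only with probability $s^{-(d+1)}$, not $1/s$. In the paper, depth of order $qn/p$ with only an $s^{d/(q-d)}$ loss comes from a different source. It is the $q$-tuple lexicographic certificate count (Lemma~\ref{lem:count}), combined with de Caen's Tur\'an bound and a $(q-d)$-th root (Lemma~\ref{lem:depth}).

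The greedy phase does not supply the main term $p-q$ either. Your ``standard argument'' that $k\ge p-q+1$ heavy points yield a contradiction is not valid: nothing prevents the $q-1$ padding sets together with one selected set from having a common point. For arbitrary choices of heavy points, $k$ is in fact unbounded. Consider thin triangles with a common apex, grouped into bundles of $r$ sets that share a region far from the apex, with different bundles meeting only at the apex. This family has a one-point transversal, yet greedy may pick one point per bundle.

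The missing idea is the dichotomy of Lemma~\ref{lem:dichotomy}, iterated as in Proposition~\ref{prop:bootstrap}. Lower $(p,q)$ in steps of $(\lceil kp/q\rceil,k)$. Then one of two things happens:
\begin{itemize}
\item you reach a pair $(p_\ell,q_\ell)$ with $k<q_\ell\le 2k$ and $p_\ell\le 2kp/q$; or
\item you find a subfamily $S$ with $|S|\le p$ such that $\F\setminus S$ has the $(\lceil kp/q\rceil,k+1)$-property. The $(p,q)$-property of $\F$ lets you pierce $S$ by exactly $p-q+1$ points.
\end{itemize}
Only at this small scale is the LP-duality and weak-net bound (Proposition~\ref{prop:basic}) applied, and there the multiset loss costs merely a factor $k$. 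Taking $k\approx d+\log(esp/q)$ makes $s^{d/(k+1-d)}$ and $(kp/q)^{(d-1)/(k+1-d)}$ bounded. That choice of $k$, not a Chernoff bound, is the source of the factor $\log(esp/q)$ and of the hypothesis $q\ge C_d\log(esp)$.
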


As a consequence from the proof method of the theorems above, we obtain the following fractional Helly estimate, which is of independent interest. Its dependence on $s$ is one power better than the direct reduction to convex components.

\begin{theorem}[Fractional Helly for $s$-convex sets]\label{thm:fractional-helly}
Let $F_1,\ldots,F_n\subseteq\R^d$ be $s$-convex sets, where $n\ge d+1$. If at least $\alpha\binom{n}{d+1}$ of their $(d+1)$-tuples intersect, then some point belongs to at least $$\frac{\alpha n}{(d+1)s^d}$$ of the sets.
\end{theorem}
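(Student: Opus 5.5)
Write each $F_i=\bigcup_{j=1}^{s}C_i^j$ with $C_i^j$ convex (padding with repeated components if $F_i$ has fewer than $s$). The plan is to imitate the elementary lexicographic proof of fractional Helly, which gives $\beta_d(\alpha)\ge\alpha/(d+1)$. The only change is how intersecting $(d+1)$-tuples of the $F_i$ are charged to intersecting tuples of convex components.

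\textbf{Step 1: a witness choice.} For each intersecting $(d+1)$-tuple $I=\{i_1<\dots<i_{d+1}\}$, fix a point $x_I\in\bigcap_{k}F_{i_k}$. For each $k$, pick an index $j_k$ with $x_I\in C_{i_k}^{j_k}$. This gives an intersecting $(d+1)$-tuple of convex sets $C_{i_1}^{j_1},\dots,C_{i_{d+1}}^{j_{d+1}}$.

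Among all such component tuples, choose one whose intersection is lexicographically minimal, using a generic linear order, as in the classical proof. By Helly-type reasoning, the lexicographic minimum of an intersection of convex sets in $\R^d$ is determined by at most $d$ of them. So the minimum point $v_I$ of $\bigcap_k C_{i_k}^{j_k}$ equals the minimum point of the intersection of some $d$ of these components, say all except the one with index $i_m$. Moreover, $v_I$ lies in $C_{i_m}^{j_m}$, hence in $F_{i_m}$.

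\textbf{Step 2: charging.} Charge $I$ to the pair consisting of the $d$-tuple of indices $I\setminus\{i_m\}$ together with the chosen components $(j_k)_{k\ne m}$. The number of possible charge targets is at most $\binom{n}{d}s^d$. Each intersecting $(d+1)$-tuple is charged exactly once. Hence some target $(J,(j_k)_{k\in J})$ receives at least
\[
\frac{\alpha\binom{n}{d+1}}{s^d\binom{n}{d}}=\frac{\alpha(n-d)}{(d+1)s^d}
\]
tuples.

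All tuples charged to the same target share the same point $v:=\operatorname{lexmin}\bigcap_{k\in J}C_k^{j_k}$. Each charged tuple contributes a distinct extra index $i_m$ with $v\in F_{i_m}$. The point $v$ also lies in the $d$ sets $F_k$, $k\in J$.

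\textbf{Step 3: the count.} Therefore $v$ lies in at least $\frac{\alpha(n-d)}{(d+1)s^d}+d\ge\frac{\alpha n}{(d+1)s^d}$ sets. The last inequality uses $\alpha\le1$ and $s\ge1$, since $d\ge \alpha d/((d+1)s^d)$.

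\textbf{Main obstacle.} The subtle step is making "determined by $d$ of them" rigorous and consistent. The lexicographic minimum must exist, which needs compactness. For finite families one may replace each component by the convex hull of finitely many points, or intersect with a large box. One also has to check that removing the determining index still leaves a $d$-tuple whose lex-minimum equals $v_I$. This is the standard fact that a lex-minimum of an intersection of convex sets in $\R^d$ is attained by a subfamily of size $d$. If that fact only gives "at most $d$", pad the subfamily with arbitrary further members of the tuple.

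The charging must record the component indices $j_k$ only for the $d$ remaining sets. This is exactly where the factor is $s^d$ rather than $s^{d+1}$, and it is the source of the improvement over passing to all $ns$ components.
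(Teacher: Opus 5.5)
Your proposal is correct and follows the paper's route. The paper obtains the theorem from the certificate count of Lemma~\ref{lem:count} with $q=d+1$, labelling each intersecting tuple by the $d$ sets (and their chosen components) that determine the lexicographic minimum; this gives $\alpha\binom{n}{d+1}\le N_{d+1}(\F)\le s^d\binom{n}{d}(m-d)$, which is exactly your pigeonhole over $s^d\binom{n}{d}$ targets followed by your Step~3.

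Two minor remarks. The minimization over component tuples in Step~1 is superfluous, since any component tuple with nonempty intersection works ($v_I$ lies in all of its members). Of your two compactification options, only replacing each component by the convex hull of its assigned witness points $x_I$ is valid; cutting a non-closed component by a box does not make it compact.
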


A colorful tuple from families $\F_1,\ldots,\F_{d+1}$ consists of one member of each family. For ordinary convex sets, colorful fractional Helly theorems, including optimal bounds, were developed in~\cite{BulavkaGoodarziTancer21}. We also obtain the following colorful version of Theorem~\ref{thm:fractional-helly}.

\begin{theorem}[Colorful fractional Helly for $s$-convex sets]\label{thm:colorful-fh}
Let $\F_1,\ldots,\F_{d+1}$ be finite nonempty families of $s$-convex sets in $\R^d$, with $|\F_i|=n_i$. If at least $\alpha n_1\cdots n_{d+1}$ colorful tuples intersect, then for some $i\in[d+1]$ there is a point contained in at least $\alpha n_i/((d+1)s^d)$ members of $\F_i$.
\end{theorem}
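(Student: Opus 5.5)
Each $s$-convex set is written as a union of at most $s$ convex pieces, and the problem is moved to these pieces. For every intersecting colorful tuple $(F_1,\ldots,F_{d+1})$ with $F_i\in\F_i$, fix a common point $x$ and pick, for each $i$, a convex piece $C_i\subseteq F_i$ containing $x$. This gives an intersecting colorful $(d+1)$-tuple of convex pieces. A colorful $(d+1)$-tuple of pieces determines its tuple of sets, so the map is injective. Hence, in the families $\mathcal C_i$ of pieces of members of $\F_i$, with $|\mathcal C_i|\le sn_i$ (repeat pieces if needed so that $|\mathcal C_i|= sn_i$ exactly), we get at least $\alpha n_1\cdots n_{d+1}$ intersecting colorful tuples. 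This is a density of $\alpha/s^{d+1}$ among the $s^{d+1}n_1\cdots n_{d+1}$ colorful tuples of pieces.

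Next I use the lexicographic (Katchalski--Liu style) argument in its colorful form. Fix a generic linear functional, for example the lexicographic order, and assume by a standard perturbation that the pieces are compact and in general position. For each intersecting colorful tuple of pieces, take the lexicographically minimal point of its intersection. This point is the minimum of the intersection of some subfamily of at most $d$ pieces of the tuple, and these pieces carry at most $d$ distinct colors. So some color $i$ is omitted, and the $d$ pieces of the other colors (possibly fewer) already determine the minimum point. Charge the tuple to such an omitted color $i$ and to its $d$-tuple $\sigma$ of pieces of the other colors. By pigeonhole there are a color $i$ and a colorful $d$-tuple $\sigma$ of pieces avoiding color $i$ that are charged at least the average number of times. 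That average is
\[
\frac{\alpha n_1\cdots n_{d+1}}{(d+1)\prod_{j\neq i} (s n_j)} \;=\;\frac{\alpha n_i}{(d+1)s^d}.
\]
This pigeonhole step is the delicate part. One must count over pairs $(i,\sigma)$ so that the denominator is $\sum_i\prod_{j\ne i}sn_j$, and a weighted averaging is needed to land on the clean bound $\alpha n_i/((d+1)s^d)$ for some $i$. To do this, I would assign each tuple to color $i$ with weight and compare $\sum_i (\text{charges to } i)/\bigl(\alpha n_1\cdots n_{d+1}\bigr)\ge1$. Then some $i$ receives at least a $1/(d+1)$ fraction, and within that $i$ we average over the $s^d\prod_{j\ne i}n_j$ choices of $\sigma$.

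For the chosen $i$ and $\sigma$, every tuple charged to $(i,\sigma)$ has lexicographic minimum equal to the minimum point $x_\sigma$ of $\bigcap\sigma$. So the color-$i$ piece of each such tuple contains $x_\sigma$. Distinct charged tuples with the same $\sigma$ differ only in their color-$i$ piece, and these pieces belong to members of $\F_i$. Thus $x_\sigma$ lies in at least $\alpha n_i/((d+1)s^d)$ pieces of color $i$. Here the factor $s$ is lost a second time unless we note that the earlier injectivity can be chosen with one representative piece per set. I handle this by fixing, for each intersecting set-tuple, a single piece-tuple, so that each member of $\F_i$ contributes at most one piece to the tuples charged to a fixed $\sigma$ (given $\sigma$, the set tuple is determined by $F_i$). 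Therefore $x_\sigma$ lies in at least $\alpha n_i/((d+1)s^d)$ distinct members of $\F_i$. This proves Theorem~\ref{thm:colorful-fh}, and Theorem~\ref{thm:fractional-helly} follows by the same argument without colors.
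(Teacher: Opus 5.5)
Your argument is correct and is essentially the paper's proof: one labelled piece-tuple per intersecting set-tuple, a lexicographic-minimum certificate of $d$ pieces that omits one color, pigeonhole first over the $d+1$ colors and then over the at most $s^d\prod_{j\ne i}n_j$ certificates, and the observation that tuples sharing a certificate differ in their color-$i$ set. A few things should be tidied. The density-$\alpha/s^{d+1}$ detour and the ``weighted averaging'' are unnecessary, and general position is never used. Compactness should come, as in the paper, from replacing each piece by the convex hull of the witness points assigned to it, not from a perturbation, which can change which tuples intersect.
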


\subsubsection{Upper bounds on the $(p,q)$-theorem for families of $s$-intervals}

When $d=1$, $s$-convex sets are precisely unions of at most $s$ compact intervals, usually called $s$-intervals. For $q=2$, the aforementioned upper bounds of Kaiser~\cite{Kaiser97} and Alon~\cite{Alon98} and the lower bound of Matou\v{s}ek~\cite{Matousek01} determine the quadratic dependence on $s$ up to a factor $(\log s)^2$. 
For $q>2$, Zerbib~\cite{Zerbib19} proved that, for fixed $p\ge q>1$, every family of $s$-intervals with the $(p,q)$-property can be pierced by $O_{p,q}(s^{q/(q-1)})$ points, where the dependence on $p$ is polynomial. No lower bound of comparable order was known. Our first result in this setting recovers the same exponent of $s$, with explicit linear dependence on $p$, and obtains a significantly stronger bound in the range $q \geq C \log(esp)$. The proof is based on combination of the proofs of Theorems~\ref{thm:q=d+1} and~\ref{thm:large-q} with the fact that families of intervals on the line admit weak $\epsilon$-nets of size $O(1/\epsilon)$.   

\begin{theorem}[General bounds for $s$-intervals]\label{thm:one-dimensional-intro}
For all $p\ge q\ge2$ and $s\ge1$, one has $$\HD_1^{(s)}(p,q)=O\!\left(p s^{q/(q-1)}\right).$$ If $p>q$ and $q\ge C\log(e sp)$, then
\[
\HD_1^{(s)}(p,q)\le p-q+1+O\!\left(s \cdot \frac{p}{q} \cdot\log \frac{e sp}{q}\right).
\]
\end{theorem}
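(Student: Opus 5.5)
Both bounds follow the Alon--Kleitman scheme, adapted to $s$-intervals in the way described for Theorems~\ref{thm:q=d+1} and~\ref{thm:large-q}. We use three facts in dimension one: Theorem~\ref{thm:fractional-helly} with $d=1$, the LP-duality step, and weak $\eps$-nets for intervals of size $O(1/\eps)$.

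The duality step is the same in both parts. Suppose every subfamily $\mathcal G\subseteq\F$ (with multiplicities) has a point in at least $\beta|\mathcal G|$ of its members. Then $\tau^*(\F)\le 1/\beta$. Hence there is a multiset $P$ such that every $F\in\F$ contains at least a $\beta$-fraction of $P$. Since $F$ is a union of at most $s$ intervals, one of its intervals contains at least a $\beta/s$-fraction of $P$. So a weak $(\beta/s)$-net for intervals pierces $\F$, and such a net (for example every $\lceil\beta|P|/s\rceil$-th point of $P$ in sorted order) has size $O(s/\beta)$.

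\textbf{General bound $O(ps^{q/(q-1)})$.} Take a subfamily $\mathcal G$ with $|\mathcal G|=n$ (multiplicities allowed; these only help). By the $(p,q)$-property and double counting, at least $\binom{n}{q}/\binom{p}{q}$ of the $q$-tuples intersect, so some point lies in many sets. I would not pass through pairs via Theorem~\ref{thm:fractional-helly}, since that loses $s$ per step. Instead I would work with interval components. An intersecting $q$-tuple of $s$-intervals has an intersecting $q$-tuple of components, one component from each set. The total number of components is at most $sn$.

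The aim is a depth bound of the form $\beta\gtrsim 1/(p\,s^{1/(q-1)})$, which gives $\tau=O(s/\beta)=O(p\,s^{q/(q-1)})$. To get it, I would use the fact that intersecting $q$-tuples of intervals are counted by cliques in an interval graph. If the maximum depth among components of distinct sets is $D$, then every intersecting $q$-tuple contains the left endpoint of its rightmost-starting component. Charging each tuple to that endpoint shows there are at most $sn\binom{D}{q-1}$ intersecting $q$-tuples. Comparing with the lower bound $\binom{n}{q}/\binom{p}{q}$ gives
\[
D^{q-1}\gtrsim \frac{n^{q-1}}{s\,p^{q-1}},\qquad\text{that is,}\qquad D\gtrsim \frac{n}{p\,s^{1/(q-1)}}.
\]
Getting the constants right, so that $(p/q)$-type factors combine into a linear dependence on $p$, is where care is needed. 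I would use $\binom{n}{q}/\binom{p}{q}\ge (n/p)^q$-type estimates for $n\ge p$, and handle $n<p$ trivially.

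\textbf{Bound for $q\ge C\log(esp)$.} Here I would run the dichotomy/bootstrapping of~\cite{KST18}. If the family contains $p-q+2$ pairwise disjoint-from-each-other "far" sets, or more generally if some point lies in many sets, we remove a point. Concretely, greedily pick points of depth at least $\approx q/2$ of the remaining family while possible, and use the $(p,q)$-property to show that after removal the remaining family satisfies a $(p',q')$-property with $p'/q'=O(p/q)$ and $q'\ge 2$. The counting above with $q$ large gives a fractional density $\beta\gtrsim q/(p\cdot s^{1/(q-1)})$, and $s^{1/(q-1)}=O(1)$ once $q\ge C\log(esp)$. This yields $\tau=O(s(p/q)\log(esp/q))$ on the residual family, where the logarithm comes from the dichotomy iteration or the Chernoff-type sampling.

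The main obstacle is the KST-style reduction that produces the additive term $p-q+1$ exactly. One must show that each "bad" step, where no deep point exists, forces disjoint sets in a way that costs at most one point per unit decrease of $p-q$. The residual step then handles the rest with the $O(s\frac pq\log\frac{esp}{q})$ transversal. I would follow the KST18 argument verbatim, replacing its fractional Helly and weak-net inputs with the one-dimensional component counting and the $O(s/\beta)$ net from above.
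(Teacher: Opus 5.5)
Your charging bound $N_q\le sn\binom{D}{q-1}$ is fine; it is the paper's lexicographic certificate count for $d=1$. The problem in the first part is the lower bound you pair it with, which is too weak to give linear dependence on $p$. From $\binom nq/\binom pq\ge(n/p)^q$ and $\binom{D}{q-1}\le D^{q-1}/(q-1)!$ you only get $D^{q-1}\ge (q-1)!\,n^{q-1}/(s\,p^{q})$, not $D^{q-1}\gtrsim n^{q-1}/(sp^{q-1})$. The factor $(q-1)!/p$ that you silently replaced by $1$ is tiny when $q$ is small compared with $p$. For $q=2$ you get only $D\ge n/(sp^2)$, hence $\tau=O(s^2p^2)$. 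In general you land at roughly $(sp)^{q/(q-1)}$, which is Zerbib's order, not $O(ps^{q/(q-1)})$.

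The missing ingredient is a Tur\'an-type supersaturation bound. De Caen's theorem gives, for $n\ge2p$, $N_q\ge\frac12\binom nq/\binom{p-1}{q-1}\ge n^q/(2qp^{q-1})$, which is larger than your count by a factor of order $p/q$. There is a second issue: ``multiplicities only help'' is false. Multisets do not inherit the $(p,q)$-property. For example, $\{A,A,B,B\}$ with $A\cap B=\emptyset$ violates $(4,3)$ even when the family containing $A,B$ satisfies it. Multisets only satisfy the $((p-1)(q-1)+1,q)$-property. With de Caen's bound, the resulting loss $p'\le pq$ is exactly cancelled by $((q-1)!)^{1/(q-1)}\ge cq$. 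This is how the paper obtains $\beta\ge c/(ps^{1/(q-1)})$ for every weighting.

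The second part does not work as written. The fractional density $\beta\gtrsim q/(ps^{1/(q-1)})$ you invoke cannot hold for all weightings; again it presumes that multisets keep the $(p,q)$-property. Combined with LP duality and the interval net, it would give $\tau=O(s^{q/(q-1)}p/q)$. For fixed $s$ and $q=\lfloor p/2\rfloor$ this is $O(1)$, contradicting the trivial lower bound $p-q+1$. Your greedy removal of deep points does not help either. The remaining family is merely a subfamily, so it gains no new $(p',q')$-property, and the number of rounds grows with $|\F|$.

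In the paper, the term $p-q+1$ comes from a single application of Lemma~\ref{lem:dichotomy}. Take $k=\lceil\log(esp/q)\rceil$ and $k'=\lceil kp/q\rceil$, and lower $(p,q)$ repeatedly by $(k',k)$. Either you reach a pair $(p_\ell,q_\ell)$ with $k<q_\ell\le2k$ and $p_\ell\le2kp/q$, or at the first failure you obtain a subfamily $S$ with $|S|\le p$ such that $\F\setminus S$ has the $(k',k+1)$-property. The family $S$ is pierced by $p-q+1$ points: extend it to $p$ members and use the $(p,q)$-property once. Applying the first assertion at this small scale gives $O(s^{1+1/k}kp/q)$ with $s^{1/k}=O(1)$. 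The logarithm thus comes from the choice of $k$, not from sampling. At this scale even your weaker $(sp)^{q/(q-1)}$-type estimate would suffice, since $(skp/q)^{1/k}=O(1)$. So what is really missing in the second part is this dichotomy mechanism.
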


Our main one-dimensional theorem recovers $\HD_1^{(s)}(p,q)$ up to an additive term of $s+1$, when $q$ exceeds $s$ by a polylogarithmic factor. Its proof combines our geometric estimates with the $(p,q)$ dichotomy method of \cite{KST18} and an iterated bootstrapping argument, using Zerbib's aforementioned diagonal theorem as the terminal case. 

\begin{theorem}\label{thm:one-dimensional-polylog}
There is an absolute constant $C_0>0$ such that, for all integers $p\ge q\ge2$ and $s\ge2$, if $q\ge C_0s\log(2s)\log(ep)$, then
\[
p-q+s\le \HD_1^{(s)}(p,q)\le p-q+2s+1.
\]
\end{theorem}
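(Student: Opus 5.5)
The upper and lower bounds are handled separately. The upper bound follows the bootstrapping of \cite{KST18}, ending at Zerbib's diagonal theorem. The lower bound is an explicit construction.

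\textbf{Upper bound: one bootstrapping step.} Let $\F$ be a family of $s$-intervals with the $(p,q)$-property and $q\ge C_0 s\log(2s)\log(ep)$. The step I aim for is the dichotomy of \cite{KST18}, adapted to $s$-intervals:
\[
\HD_1^{(s)}(p,q)\le \max\bigl\{\HD_1^{(s)}(p-1,q-1)+1,\; p-q+O(1)\bigr\}.
\]
The dichotomy is over whether some point lies in many members of $\F$.
\begin{itemize}
\item If a point $x$ lies in many members, we remove the sets containing $x$. The remaining family satisfies the $(p-1,q-1)$-property: a $p-1$-tuple with no intersecting $(q-1)$-subset could be extended by a set through $x$ to violate $(p,q)$. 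This costs one extra point.
\item If no point is that heavy, we use the counting behind Theorem~\ref{thm:one-dimensional-intro} together with Theorem~\ref{thm:fractional-helly} for $d=1$. This gives a point in a $\Omega(q/(sp))$-fraction of every large subfamily. LP duality and the $O(1/\eps)$ weak $\eps$-nets for intervals then show the family is nearly pierced by few points, and a careful accounting brings the total to $p-q+O(1)$.
\end{itemize}

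\textbf{Upper bound: iteration and terminal case.} Iterating the first branch lowers $p$ and $q$ together, so the gap $p-q$ never changes. We stop when $q$ drops to about $s\log(2s)\log(ep)$. At that point the remaining problem is essentially a diagonal-type one, and Zerbib's diagonal theorem \cite{Zerbib19} applies.

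Each further unit of $p-q$ costs one point. The terminal piercing cost must come out as $2s+1$, which I expect to extract from Zerbib's $(p,p)$ bound after separating the $s$ components.

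The main obstacle is keeping the hypothesis $q\ge C_0s\log(2s)\log(ep)$ valid along the whole chain. The $\log(ep)$ comes from the failure probability in the sampling/counting step, and the $s\log(2s)$ from the fractional Helly loss $s^d=s$ together with the $\log(esp/q)$ term in Theorem~\ref{thm:one-dimensional-intro}. Concretely, I must check two things at every intermediate $(p',q')$:
\begin{itemize}
\item the second branch of the dichotomy still yields at most $p'-q'+1+2s$;
\item the error term $O\bigl(s\frac{p'}{q'}\log\frac{esp'}{q'}\bigr)$ is absorbed.
\end{itemize}
This is where I expect the choice of $C_0$ to be forced.

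\textbf{Lower bound.} I would take the union of two parts:
\begin{itemize}
\item $\mathcal A$: $p-q$ pairwise disjoint intervals, placed far apart;
\item $\mathcal B$: a family of $s$-intervals in which every $q$ members intersect, but which cannot be pierced by fewer than $s$ points.
\end{itemize}
Any $p$ members of $\mathcal A\cup\mathcal B$ include at least $q$ from $\mathcal B$, so these $q$ intersect and the $(p,q)$-property holds. Piercing requires one point per interval of $\mathcal A$ plus $\tau(\mathcal B)\ge s$ points, for a total of $p-q+s$.

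For $\mathcal B$, I would try $s$ distinct points $x_1,\dots,x_s$ in separate regions, with each member of $\mathcal B$ a union of $s$ tiny intervals, one in each region. The members are arranged so that every $q$-tuple shares a common coordinate-point, yet no $s-1$ points pierce all of them. Constructing such a $\mathcal B$ is the genuine difficulty of the lower bound, and I have not pinned it down.
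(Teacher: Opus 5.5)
There are genuine gaps in both halves of your proposal.

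\textbf{Upper bound.} The reduction you iterate is to delete the members through a point $x$ and pass to the $(p-1,q-1)$-property. This is valid for any point lying in some member; heaviness is never used. But it pays one point per step and leaves the gap $p-q$ unchanged. After $k$ steps you only have $\tau(\F)\le k+\HD_1^{(s)}(p-k,q-k)$. By the lower bound, the right-hand side is at least $k+p-q+s$, so this chain cannot reach $p-q+2s+1$ once $k>s+1$. Lowering $q$ to $q_0\approx s\log(2s)\log(ep)$ costs $q-q_0\gg s$ points. Moreover, the terminal pair $(p-q+q_0,q_0)$ still has gap $p-q$, so it is not diagonal and Zerbib's $(t,t)$ theorem does not apply to it.

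Your other branch is not an argument either. A deep point always exists (Lemma~\ref{lem:depth}), so the case split does no work. LP duality with interval weak nets gives transversals of size of order $1/\eps$, i.e.\ $O(ps^{q/(q-1)})$, or $p-q+1+O(s\frac{p}{q}\log\frac{esp}{q})$ after bootstrapping. It never gives $p-q+O(1)$, which would in any case contradict the lower bound $p-q+s$.

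The missing idea is to use the dichotomy of Lemma~\ref{lem:dichotomy} so that the first branch is \emph{free}. If $\F$ has the $(p-B-D,q-B)$-property, then $\tau(\F)\le\HD_1^{(s)}(p-B-D,q-B)$, a pair whose gap is smaller by $D$. Otherwise, an exceptional subfamily of size $p-B-D$ whose largest intersecting subfamily has size $q-B-u$ is pierced by $p-q-D+u+1$ points, and the rest has the $(B+D,B+u)$-property, with gap $D-u$. For the excess $\Delta_s(p,q)=\HD_1^{(s)}(p,q)-(p-q)$, these two $D-u$ terms cancel (Lemma~\ref{lem:one-dimensional-recurrence}). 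Hence the weak bound $\Delta_s\le 1+ap/q$, with $a=O(s\log(2s))$, is absorbed by taking $D$ of order $ap/(q-B)$. The near-diagonal second-branch pairs are handled by the previous recursion level, and ultimately by Zerbib via Theorem~\ref{thm:one-dimensional-near-diagonal}. About $\log(ep/s)$ recursion levels are needed, and this (not a sampling failure probability) is where $\log(ep)$ enters. The result is $p-q+O(s)$. A final cleanup with $B=t-1$, $D=1$, at scales $t_e$ satisfying $\HD_1^{(s)}(t_e,t_e)\le 2s+e-1$, brings the constant down to $2s+1$.

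\textbf{Lower bound.} Your decomposition $\mathcal A\cup\mathcal B$ matches the paper's, but $\mathcal B$ is the entire content, and the shape you suggest is impossible in this range. Suppose each member of $\mathcal B$ is one tiny interval in each of $s$ disjoint regions, and $\tau(\mathcal B)\ge 2$. Then in every region some two members have disjoint intervals there; otherwise Helly on the line gives one point in that region meeting all of $\mathcal B$. These at most $2s$ members have no common point. So ``every $q$ members intersect'' fails as soon as $q\ge 2s$ and $|\mathcal B|\ge q$, and the theorem's range forces both.

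The paper uses almost-full sets instead. By compactness of $[0,1]^{s-1}$, pick finitely many holes $U_i\subseteq[0,1]$, each a union of at most $s-1$ relatively open intervals of total length less than $1/q$, such that every $(s-1)$-tuple of points of $[0,1]$ lies in a single $U_i$. Set $G_i=[0,1]\setminus U_i$. Each $G_i$ is an $s$-interval. Any $q$ of them meet, because $q$ holes have total length less than $1$. Any $s-1$ points lie in one hole and hence miss the corresponding $G_i$, so $\tau\ge s$.
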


For a fixed number of components, the preceding estimate can be sharpened from an interval of $s+2$ possible values to two consecutive values.

\begin{theorem}[Two-value concentration for fixed $s$]\label{thm:one-dimensional-concentration}
For every fixed integer $s\ge2$, there exist an integer $\kappa_s\in\{s,\ldots,2s\}$, a constant $C_s>0$, and an integer $p_s$ such that, for all $p\ge p_s$ and $C_s\log(ep)\le q\le p$,
\[
\HD_1^{(s)}(p,q)\in\{p-q+\kappa_s,\;p-q+\kappa_s+1\}.
\]
Moreover, $\kappa_s$ is the eventual constant value of the nonincreasing sequence $\HD_1^{(s)}(t,t)$.
\end{theorem}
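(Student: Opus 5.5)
The proof rests on two monotonicity facts about the diagonal numbers $h(t):=\HD_1^{(s)}(t,t)$, followed by a sandwich argument.

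\textbf{Monotonicity and the definition of $\kappa_s$.} First, $h(t)$ is nonincreasing in $t$. A family with the $(t+1,t+1)$-property either has fewer than $t+1$ members, and then at most $t$ points pierce it, or it satisfies the $(t,t)$-property. The degenerate case of families of size exactly $t$ will need care. Second, $h(t)\le 2s$ for all large $t$: this is the case $p=q$ of the upper bound $p-q+2s+1$ in Theorem~\ref{thm:one-dimensional-polylog}, refined using Zerbib's diagonal theorem. Third, $h(t)\ge s$, from the lower-bound construction of Theorem~\ref{thm:one-dimensional-polylog} with $p=q$, which gives $p-q+s$. A nonincreasing integer sequence confined to $\{s,\dots,2s\}$ is eventually constant. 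I define $\kappa_s$ as that eventual value and $t_s$ as the index from which $h(t)=\kappa_s$.

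\textbf{Lower bound $\HD_1^{(s)}(p,q)\ge p-q+\kappa_s$.} Take a family $\mathcal G$ with the $(t,t)$-property and $\tau(\mathcal G)=\kappa_s$, where $t=q$ and $q\ge t_s$. Add $p-q$ pairwise disjoint singleton intervals placed far away from $\mathcal G$. Any $p$ members include at least $q$ members of $\mathcal G$. If $\mathcal G$ has at least $q$ members, its $(q,q)$-property gives $q$ intersecting sets, so the enlarged family has the $(p,q)$-property. Its piercing number is $\kappa_s+p-q$. The case where $\mathcal G$ has fewer than $q$ members must be excluded, which we can arrange by duplicating sets. Requiring $q\ge C_s\log(ep)$ with $p\ge p_s$ forces $q\ge t_s$.

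\textbf{Upper bound $\HD_1^{(s)}(p,q)\le p-q+\kappa_s+1$.} I use the bootstrapping/dichotomy scheme of Theorem~\ref{thm:one-dimensional-polylog}. Given $\mathcal F$ with the $(p,q)$-property, either $\mathcal F$ has the $(q',q')$-property for a suitable $q'$ with $t_s\le q'\le q$, or it does not. If it does, then $\tau\le h(q')=\kappa_s$. If it does not, there are $q'$ pairwise-non-$q'$-intersecting sets, and one peels off points. The KST dichotomy says: if some point pierces many sets, remove them with one point and pass to $(p-1,q-1)$; otherwise the family has few intersecting tuples, and the large-$q$ bound of Theorem~\ref{thm:one-dimensional-intro} reduces the remaining family. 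Iterating down to the diagonal, where $p'-q'$ is small and $q'$ is still at least $t_s$, costs exactly one point per unit decrease of $p-q$. The terminal diagonal stage then contributes $\kappa_s$, plus possibly one extra point from the last non-diagonal step, for a total of $p-q+\kappa_s+1$.

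\textbf{Main obstacle.} The hard step is the accounting in this descent: ensuring that the error terms from Theorem~\ref{thm:one-dimensional-intro} are absorbed, so that the bootstrapping loses only one additive point beyond $\kappa_s$ instead of $s+1$. I would first try to replicate the proof of Theorem~\ref{thm:one-dimensional-polylog} verbatim, with Zerbib's bound replaced by the exact value $h(q')=\kappa_s$ at the terminal case. That proof's $2s+1$ presumably arises as $(\text{Zerbib bound }2s)+1$, so substituting $\kappa_s$ for $2s$ should yield $\kappa_s+1$. The condition $q\ge C_s\log(ep)$, with constants depending on $s$, supplies the room needed to keep $q'\ge t_s$ throughout.
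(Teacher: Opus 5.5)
Your construction of $\kappa_s$ (monotonicity, $h(t)\ge s$, and Zerbib's bound forcing $h(t)\le 2s$ eventually) matches the paper, and so does your padding construction for the lower bound $p-q+\kappa_s$. The degenerate cases you flag do not arise, since a family with the $(t,t)$-property has at least $t$ members by definition.

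The gap is the upper bound $p-q+\kappa_s+1$, which is the real content of the theorem. Your descent, iterating down to the diagonal at one point per unit of gap, fails when $q$ is only logarithmic. A dichotomy step that lowers the gap by one, and whose failure branch lands where $h=\kappa_s$, passes to $(p-t,q-t+1)$ with $t\ge t_s$. So it consumes at least $t_s-1\ge 1$ from $q$, while the gap $p-q$ may be of order $p\gg q$. That full descent is exactly the near-diagonal argument of Theorem~\ref{thm:one-dimensional-near-diagonal}, and it needs $q\gtrsim t_s(p-q+1)$. Also, the step you attribute to the dichotomy (spend one point, pass to $(p-1,q-1)$) does not lower the gap at all, since the $(p,q)$-property always implies the $(p-1,q-1)$-property. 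Your fallback, rerunning the appendix proof with $\kappa_s$ in place of Zerbib's bound only at the terminal stage, would in fact work. You also guessed correctly that the extra $1$ comes from the exceptional subfamily. But as written this is a hope, not an argument.

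The missing idea is that you never have to reach the diagonal. For fixed $s$, Theorem~\ref{thm:one-dimensional-polylog} already gives $\HD_1^{(s)}(p,q)\le p-q+2s+1$ in a range of the form $q\ge A_s\log(ep)$. So the logarithmic range is handled by a black box, and you only need to lower the additive constant from $2s+1$ to $\kappa_s+1$, a bounded number of times.

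One step goes as follows. Assume $\HD_1^{(s)}(p,q)\le p-q+K$ in such a range, with $K\ge\kappa_s+2$, and let $\mathcal F$ have the $(p,q)$-property with $p>q\ge t_s$ (for $p=q$ you get $\kappa_s$ directly). Apply Lemma~\ref{lem:dichotomy} with $(p',q')=(p-t_s,\,q-t_s+1)$, so $p'-q'=p-q-1$.
\begin{itemize}
\item If $\mathcal F$ has the $(p',q')$-property, then $\tau(\mathcal F)\le p-q-1+K$.
\item Otherwise some $\mathcal S$ of size $p'$ has no intersecting $q'$-tuple, and $\mathcal F\setminus\mathcal S$ has the $(t_s,t_s)$-property, so it is pierced by $\kappa_s$ points. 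Adjoining any $t_s$ outside members to $\mathcal S$ gives $p$ sets, some $q$ of which meet, and at least $q'-1$ of those lie in $\mathcal S$. Hence $\tau(\mathcal S)\le 1+p'-(q'-1)=p-q+1$.
\end{itemize}
Thus $\tau(\mathcal F)\le p-q+\max\{K-1,\kappa_s+1\}=p-q+K-1$. Each step costs only the constant $t_s-1$ from $q$, and at most $2s-\kappa_s\le s$ steps are needed. After enlarging the logarithmic constant and the threshold on $p$, you reach $p-q+\kappa_s+1$ with $q\ge C_s\log(ep)$ still in force.
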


The exact value of $\kappa_s$ is not known to us; in particular, our argument does not determine whether $\kappa_s=s$. Thus, Theorem~\ref{thm:one-dimensional-concentration} gives a strong concentration phenomenon without explicitly identifying the two values.

The comparison of our results for $d=1$ with the previous ones, is demonstrated in the table at the beginning of Section \ref{sec:one-dimensional}.

\subsubsection{Lower bounds on the $(p,q)$-theorem}

Our lower-bound results address separately the settings of $q=d+1$ and of an arbitrarily large $q \leq p$. For $q=d+1$ and $s=1$ (i.e., for families of convex sets), lower bounds for the Hadwiger--Debrunner numbers are much less understood than upper bounds. For a long time, the best known lower bound was $\HD_d(p,q) \geq \Omega(\frac{p}{q}\log^{d-1}(\frac{p}{q}))$, obtained by Bukh, Matou\v{s}ek and Nivasch~\cite{BMN11}, and it wasn't clear at all that $\HD_d(p,d+1)$ is not quasi-linear in $p$. 
In~\cite{KS21}, Keller and Smorodinsky used a novel construction of sets of points in general position in the plane due to Balogh and Solymosi~\cite{BS18} to obtain the essentially first superlinear lower bounds $\HD_2(p,3) \geq \Omega(p^{6/5-\epsilon})$ and  $\HD_2(p,q)\ge p^{1+\Omega(1/q)}$ for all $q \geq 3$, which implies that for each $d \geq 2$ and $p \geq q \geq d+1$, 
$\HD_d(p,q)\ge p^{1+\Omega(1/q)}$. Very recently, Roche-Newton~\cite{RocheNewton26} obtained the improved bound $\HD_2(p,3) \geq \Omega(p^{5/4-\epsilon})$.

Using a higher-dimensional general-position construction of Suk and Zeng~\cite{SukZeng26}, we improve the lower bound in dimensions $d \geq 3$ to an almost quadratic one.

\begin{theorem}[The convex lower bound]\label{thm:convex-lower}
For every fixed $d\ge3$ and every $\gamma>0$, there are arbitrarily large $p$ such that
$\HD_d(p,d+1)\ge c_{d,\gamma}p^{2-\gamma}$.
Equivalently, $\HD_d(p,d+1)\ge p^{2-o(1)}$ along an unbounded sequence of values of $p$.
\end{theorem}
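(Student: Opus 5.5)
The plan is to follow the Keller--Smorodinsky approach of~\cite{KS21}, replacing the planar Balogh--Solymosi point set by the higher-dimensional general-position construction of Suk and Zeng~\cite{SukZeng26}. We build a family of convex sets that satisfies the $(p,d+1)$-property but needs many piercing points. Let $P\subset\R^d$ be a set of $N$ points in general position (no $d+1$ on a hyperplane) such that every subset of $P$ in convex position, or every subset avoiding some forbidden configuration, has size at most $N^{\gamma'}$. I expect~\cite{SukZeng26} to supply, for $d\ge3$, sets of size $N$ in general position in which every $d+2$ or $2d$ points contain a bad configuration, with a large-subset bound of the form $N^{o(1)}$ or $N^{1/2+o(1)}$.

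The family $\F$ consists of the convex hulls of all $d$-tuples of $P$, i.e.\ the $(d-1)$-simplices spanned by $d$-subsets, slightly fattened so that they remain convex. Equivalently, we can use a dual family in which each set is indexed by a $d$-tuple. Since $P$ is in general position, a point lies in few of these simplices unless they are related combinatorially. The aim is to show two things. First, $\tau(\F)$ is large: each point meets only $O(N^{d-1})$ or so of the $\binom{N}{d}$ simplices, so piercing requires roughly $N$ points (or more after counting). Second, $\F$ has the $(p,d+1)$-property with $p$ around $N^{1/2+o(1)}$. Taking $p\approx N^{1/2}$ would then give $\tau\ge N\approx p^{2-o(1)}$.

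To verify the $(p,d+1)$-property, take any $p$ simplices. The $d$-tuples involve at least about $p^{1/d}$ points. We need $d+1$ of the simplices to share a point, and we would get this from a Ramsey-type argument: among many simplices, either $d+1$ of them share a common vertex region or a common crossing point, or the vertex set contains a large subset in a configuration that the Suk--Zeng property forbids. A cleaner variant is to let each set be the union or hull of a point's "star". I would commit to the following version. The sets are $K_x=\operatorname{conv}(P\setminus\{x\})$ restricted appropriately, or dually, a family in which $d+1$ members fail to intersect exactly when the corresponding points form a specific forbidden pattern. The $(p,d+1)$-property then holds as soon as every $p$-subset contains a non-forbidden $(d+1)$-subset, which is exactly the Suk--Zeng bound: forbidden-pattern-free subsets have size less than $p\approx N^{\gamma}$. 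Meanwhile $\tau$ is large because any point lies in the sets indexed by at most $N^{\gamma}$ points. This yields $\tau\ge N^{1-\gamma}$, and the construction must be arranged so that the effective exponent comes out as $p^{2-\gamma}$.

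The main obstacle is designing the family so that the intersection pattern of $(d+1)$-tuples exactly matches the extremal property guaranteed by~\cite{SukZeng26}, while each piercing point hits only about $\sqrt{|\F|}$ members. The latter requirement is what produces the quadratic exponent. I would first try to imitate the~\cite{KS21} planar reduction line by line, lifting its triangles to $d$-simplices, and then check that the bound Suk and Zeng provide on the relevant subsets is $N^{1/2+o(1)}$ in every dimension $d\ge3$. If it is, choosing $p$ equal to that bound for each of the unboundedly many admissible $N$ gives the claimed sequence of values of $p$.
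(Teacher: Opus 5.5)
There is a genuine gap: you never produce a family that works, and the one concrete hypothesis you fix on $P$ is the reverse of what is needed. The Suk--Zeng input is a set that is \emph{not} in general position. Apply their theorem with $h=2d$; the numerical conditions read $2d^2+2>6d$ for odd $d\ge3$ and $2d^2+2>8d$ for even $d\ge4$, so every $d\ge3$ is covered. This gives arbitrarily large $N$-point sets $P\subset\R^d$ with no $3d$ points on a hyperplane, in which every subset of at least $N^{1/2+\eta}$ points contains $d+1$ points on a hyperplane.

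Your passing mention of a ``dual family'' is the right instinct. The missing step is to make it concrete via point--hyperplane duality. After a generic change of coordinates, $P$ becomes $N$ hyperplanes. Cohyperplanar $(d+1)$-tuples of points become $(d+1)$-tuples of hyperplanes through a common point, and ``no $3d$ points on a hyperplane'' becomes ``no point on $3d$ of the hyperplanes''. Intersect each hyperplane with a large ball containing a point of every hyperplane and the common point of every concurrent $(d+1)$-tuple. This gives $N$ compact convex sets with the $(m,d+1)$-property for $m=\lceil N^{1/2+\eta}\rceil$ and depth at most $3d-1$. Hence $\tau\ge N/(3d-1)\ge c_{d,\eta}m^{1/(1/2+\eta)}$. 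So the quadratic exponent comes from $p\approx N^{1/2}$ combined with \emph{bounded} depth. A family of $N$ sets with depth about $\sqrt N$, as in your heuristic, would only give $\tau\gtrsim p$.

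The concrete families you suggest fail. Slightly fattened $(d-1)$-simplices on $d$-tuples of $P$ contain $\lfloor N/d\rfloor$ pairwise disjoint members: split $P$ into blocks of $d$ consecutive points along a generic direction, and their hulls lie in disjoint slabs. So even the $(p,2)$-property fails for $p\approx N^{1/2}$. The sets $\operatorname{conv}(P\setminus\{x\})$ fail in the opposite way. Any $d+1$ of them, indexed by $S$, contain $\operatorname{conv}(P\setminus S)\neq\emptyset$, so by Helly the whole family has a common point and $\tau=1$. More generally, starting from a set in general position cannot work. What must be transferred to the convex family is the coexistence of two properties of $P$: every large subset \emph{contains} a cohyperplanar $(d+1)$-tuple, which gives the $(p,d+1)$-property, while no hyperplane contains many points, which gives bounded depth.
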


Thus, already for ordinary convex sets, the exponent improves from $1+\Omega(1/q)$ to $2-o(1)$. Simply viewing the same construction as a family of $s$-convex sets would give no further dependence on $s$. Thus we use an amplification lemma to convert any convex lower bound into a lower bound for unions, with an additional linear gain in the number of components. Combining it with Theorem~\ref{thm:convex-lower} gives:
\begin{theorem}[Lower bound for $s$-convex sets]\label{thm:s-lower}
For every fixed $d\ge3$ and every $\gamma>0$, there are arbitrarily large $p$ such that, for every $s\ge1$, one has $\HD_d^{(s)}(p,d+1)\ge c_{d,\gamma}s\,p^{2-\gamma}$. Thus $$\HD_d^{(s)}(p,d+1)\ge s\,p^{2-o(1)}.$$
\end{theorem}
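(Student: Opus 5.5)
Theorem~\ref{thm:s-lower} follows from Theorem~\ref{thm:convex-lower} together with an amplification lemma. The lemma should say: if $\F$ is a finite family of convex sets in $\R^d$ with the $(p,d+1)$-property and $\tau(\F)=m$, then for every $s\ge1$ there is a family of $s$-convex sets with the $(p,d+1)$-property whose transversal number is at least $c\,s\,m$. I would fix $p$ from the sequence in Theorem~\ref{thm:convex-lower} and apply the lemma to the extremal convex family.

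The construction I would try is to take $s$ copies of $\F$, say $\F^{(1)},\dots,\F^{(s)}$, translated far apart. Each copy is placed in its own region $R_j$, with the regions pairwise far apart, so that no convex set meets two regions. I index $\F=\{F_1,\dots,F_n\}$ and define the $s$-convex sets $G_i=\bigcup_{j=1}^s F_i^{(j)}$, where $F_i^{(j)}$ is the translate of $F_i$ into $R_j$. This only gives $\tau=m$, since $G_i\cap R_1$ determines everything: piercing copy $1$ already pierces all $G_i$. So the copies must be combined in a ``product'' fashion.

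Instead, I would use $s$ independent copies with different index maps. Take the index set $[n]^s$, or a suitable subset of it. For a tuple $\mathbf{i}=(i_1,\dots,i_s)$, let $G_{\mathbf i}=\bigcup_j F^{(j)}_{i_j}$. A point set $P$ with $|P|<s\,m/2$... here the argument fails, because piercing needs only one component per set. Indeed, $\tau$ of this product family is $\min_j \tau(\F)$, so I would abandon the product idea.

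The construction I would actually use is the disjoint union of $s$ far-apart copies $\F^{(1)}\cup\dots\cup\F^{(s)}$, with each member taken as a $1$-convex, hence $s$-convex, set. Then $\tau=s\,m$, since each region needs $m$ points. The cost is that the $(p,q)$-property degrades: any $p$ sets lie in at most $s$ copies, so some copy contains at least $p/s$ of them. The family therefore has the $(s(p-1)+1,d+1)$-property, and this gives only $\tau\ge s\,m$ with $p$ replaced by about $sp$, i.e.\ $\HD^{(s)}_d(sp,d+1)\ge s\,\HD_d(p,d+1)$. That loses the $s$-gain.

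To recover it, I would merge copies. Partition each $\F^{(j)}$ into groups and glue one set from each of the $s$ copies into a single $s$-convex set, in such a way that any $p$ glued sets contain $p$ distinct members of one single copy. The aim is a gluing under which $p$ glued sets force $p$ distinct members in some copy, while piercing still requires about $m$ points per copy. Whether such a gluing exists is the main obstacle, and I expect it to be the crux of the amplification lemma.

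Concretely, I would try the following. Choose $\F$ to be extremal and nearly ``vertex-transitive'', so that deleting a $1/s$ fraction of its sets leaves $\tau\ge m/2$. Glue via random bijections. Then a point set meeting few glued sets in a copy misses many of them, and a union bound over random matchings should show that every set of fewer than $sm/4$ points leaves some glued set unpierced. The $(p,d+1)$-property is checked inside any copy where the $p$ sets have distinct components, which holds automatically since the matchings are bijections. Finally, I would absorb constants into $c_{d,\gamma}$.
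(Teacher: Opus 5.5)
Your overall reduction is the right one: amplify the extremal convex family of Theorem~\ref{thm:convex-lower}. However, the construction you finally settle on does not amplify the transversal number at all, for exactly the reason you identified in your product attempt. With bijective gluing $G_i=\bigcup_{j=1}^s F^{(j)}_{\pi_j(i)}$, the first components $F^{(1)}_{\pi_1(i)}$, $i\in[n]$, run over all of $\F^{(1)}$. So any $m$ points piercing $\F^{(1)}$ pierce every $G_i$, and $\tau(\{G_i\})\le m$.

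Your claimed bound, that fewer than $sm/4$ points always miss some glued set, is therefore false for $s\ge5$. No choice of random matchings or vertex-transitivity can rescue it. Indeed, your glued family is a subfamily of the product family you had already discarded, and every such family has $\tau\le m$. The missing idea is that unions must be allowed to \emph{skip} copies. Then a point set that fully pierces a few copies still leaves an unpierced union assembled from copies it has not fully pierced.

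The paper does this as follows. Take $\mathcal B$ with the $(u,q)$-property and $\tau(\mathcal B)=t=\HD_d(u,q)$, and place $2s$ copies of it in disjoint balls. Form all unions of one member from each of $s$ chosen copies.

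\textbf{Piercing.} If some point set fails to fully pierce $s$ of the copies, pick an unpierced member in each of those copies. Their union is an unpierced set. Hence at least $s+1$ copies are fully pierced, which costs at least $(s+1)t$ points.

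\textbf{The $(p,q)$-property.} This is verified by pigeonhole, not by bijections. Put $R=(u-1)(q-1)+1$ and take any $2R-1$ unions. They have $s(2R-1)>2s(R-1)$ ball occurrences, so some ball is used by at least $R$ of them. That ball then carries a labelled multiset of $R$ members of $\mathcal B$. Either it contains $u$ distinct members, so $q$ of them intersect, or one member appears $q$ times. In both cases $q$ of the unions intersect.

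This gives $\HD_d^{(s)}(2(u-1)(q-1)+1,q)\ge(s+1)\HD_d(u,q)$. With $q=d+1$, the parameter grows only by the factor $2d$, so $u=\Theta_d(p)$ and the loss is absorbed into $c_{d,\gamma}$. In particular, you never needed to preserve $p$ exactly. That self-imposed constraint is what pushed you toward bijective gluing.
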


For large $q$, the phenomenon is different. Keller, Smorodinsky and Tardos~\cite{KST18} proved the almost exact upper bound $\HD_d(p,q)\le p-q+2$ for ordinary convex sets in a broad large-$q$ range. For unions of convex sets, the additive term must grow both with the number of components and with the dimension. The following construction, that uses the Steiner convex partition theorem of Dumitrescu, Har-Peled and T\'{o}th~\cite{DumitrescuHarPeledToth14}, generalizes the one-dimensional lower bound $p-q+s$ for $d=1$.

\begin{theorem}[A dimension-dependent lower bound]\label{thm:large-q-lower}
Let $d\ge1$, $s\ge1$, and $p\ge q\ge d+1$. Then
\[
\HD_d^{(s)}(p,q)\ge p-q+d(s-1)+1.
\]
In particular, for $d\ge2$ and $s\ge2$, the convex bound $p-q+2$ does not extend to $s$-convex sets.
\end{theorem}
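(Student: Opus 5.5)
The lower bound will come from an explicit family built in two layers. A classical block handles the term $p-q+1$. A second block, obtained from a Steiner convex partition, forces $d(s-1)$ additional piercing points. The basic classical construction takes $p-q$ pairwise disjoint convex sets (say small balls $B_1,\dots,B_{p-q}$ far apart) together with many copies of one further convex set $K$ placed far away from them. Among any $p$ members at least $q$ are copies of $K$, so the family has the $(p,q)$-property, and it needs $p-q+1$ points. My plan is to replace the copies of $K$ by a richer subfamily $\mathcal G$ of $s$-convex sets with the following two properties. First, every $q$ members of $\mathcal G$ (in fact every $d+1$, or even all of them, in a suitable sense) pairwise-share a point when enough of them are taken, i.e.\ any $q$ members of $\mathcal G$ have a common point. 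Second, $\tau(\mathcal G)\ge d(s-1)+1$. Then $\F=\{B_1,\dots,B_{p-q}\}\cup\mathcal G$ satisfies the $(p,q)$-property, since any $p$ members contain at least $q$ members of $\mathcal G$. A piercing set must use a separate point for each $B_i$, because the $B_i$ are far from $\mathcal G$ and from each other, and it needs $d(s-1)+1$ further points for $\mathcal G$. This gives the bound.

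The core is building $\mathcal G$. Take a finite point set $X=\{x_1,\dots,x_m\}$ in $\R^d$ in general position. The Steiner convex partition theorem of Dumitrescu, Har-Peled and T\'oth will tell us how many convex pieces are needed to cover a point set with specified holes. The key idea I would use is this: for each subset $Y\subset X$ of size $|X|-k$, with $k=d(s-1)+1$ (the complement being $k$ points), let $G_Y$ be a union of at most $s$ convex sets that contains $Y$ and avoids $X\setminus Y$. That is, the $s$ convex pieces cover $Y$ while missing the $k$ excluded points. Since any $k=d(s-1)+1$ points in general position can be avoided by $s$ convex pieces covering the rest, this is the content I expect to extract from the partition theorem, or from a direct hyperplane-cut argument. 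For instance, $d$ excluded points can be cut off by one hyperplane through them perturbed, and each additional convex piece allows removing $d$ more points. Taking all $\binom{m}{k}$ such sets, any $q$ of them intersect at a point of $X$ provided $m>qk$. A point of $X$ not excluded by any of the $q$ chosen sets exists as soon as $m\ge qk+1$, and sets of this form meet only near $X$ if the pieces are shrunk to small neighborhoods of the convex hulls. To force $\tau(\mathcal G)\ge k$ I would restrict each $G_Y$ to lie in a small neighborhood $N_\eta$ of $X$ intersected with the convex pieces. Then any point outside $N_\eta$ pierces nothing, and any set of $k-1$ points is near at most $k-1$ points of $X$, which are all avoided by some $G_Y$ (choose $Y$ excluding those points and extra ones).

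The main obstacle is the geometric claim that any $k=d(s-1)+1$ points excluded from a general-position set can be realized by an $s$-convex set covering exactly the rest (within the neighborhood). Actually we need the dual direction in a clean form, and I should double-check which side the counting works on. Convex pieces avoiding one point can be half-space-restricted, but covering $X\setminus Z$ with $s$ convex sets missing $Z$ is exactly the Steiner-partition-type statement. I would first try the following. Order the points by a generic direction, and note that for the lower bound we may choose $X$ freely, for example on the moment curve. On the moment curve, convex position means any subset's hull avoids the other points, so $X\setminus Z$ splits into at most $|Z|+1$ contiguous runs whose hulls miss $Z$. Grouping runs into $s$ convex pieces requires that each piece's hull miss $Z$; on the moment curve a hull of a union of runs contains no other curve points. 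This would give an $s$-convex set even for arbitrary $|Z|$, which is too strong and signals that a one-point-cover argument is insufficient. Hence the real content must be the $(p,q)$ intersection requirement combined with $s$-convexity more subtly. If this naive route trivializes, I would fall back on the approach the authors indicate: use Dumitrescu--Har-Peled--T\'oth's lower bound on Steiner convex partitions to show that the complement structure forces exactly $d(s-1)+1$ piercing points, and verify the $q\ge d+1$ intersection property by Helly-type reasoning on the pieces.
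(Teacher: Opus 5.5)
Your outer layer is right and matches the paper: you adjoin $p-q$ far-apart disjoint convex sets to a family $\mathcal G$ of $s$-convex sets in which every $q$ members meet and $\tau(\mathcal G)\ge d(s-1)+1$. The gap is in the construction of $\mathcal G$.

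A transversal may use arbitrary points of $\R^d$, but indexing your sets by subsets $Y\subseteq X$ of a fixed finite set only controls which points of $X$ are avoided. Your stated requirements can in fact be met by a family with $\tau(\mathcal G)=1$. Take $d\ge2$, $X$ in convex position, and $G_Y=\operatorname{conv}(Y)$. Since $m>qk\ge(d+1)k$, a centerpoint $c$ of $X$ lies in every $\operatorname{conv}(Y)$. Otherwise some closed half-space would contain $c$ and miss $\operatorname{conv}(Y)$; it would then contain at least $m/(d+1)>k$ points of $X$, all outside $Y$, which is impossible.

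Your remedy, intersecting with a small neighborhood $N_\eta$ of $X$, destroys $s$-convexity. A convex piece containing many points of $Y$ meets $N_\eta$ in many disjoint small convex sets, so $G_Y\cap N_\eta$ has roughly $|Y|$ components rather than $s$. Also, you only need to dodge $d(s-1)$ points, not $d(s-1)+1$. Dodging $d(s-1)+1$ points with $s$ pieces already fails for $d=1$, where removing $s$ interior points from a sorted set leaves $s+1$ runs.

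The missing idea is to let each member dodge \emph{arbitrary} $d(s-1)$ points of a region while having a tiny complement. The paper fixes a simplex $B$ and, for any $T\subseteq B$ with $|T|\le d(s-1)$, applies the \emph{upper} bound of Dumitrescu--Har-Peled--T\'oth to $T$ together with the $d+1$ vertices of $B$. These at most $ds+1$ points admit a Steiner convex partition of $B$ into at most $\lceil ds/d\rceil=s$ tiles whose interiors avoid $T$. Shrinking each tile by a homothety about an interior point gives a compact $s$-convex $G\subseteq B$ missing $T$ with $\operatorname{vol}(B\setminus G)<\eta$.

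With $\eta<\operatorname{vol}(B)/q$, any $q$ such sets meet, since $q$ holes cannot cover $B$. This is a volume count, the continuous analogue of your counting on $X$, not Helly-type reasoning. Compactness of $B^{d(s-1)}$, covered by the open sets $(B\setminus G_{\mathbf x})^{d(s-1)}$, reduces the construction to finitely many sets while every $d(s-1)$ points still miss some member. Points outside $B$ pierce nothing, since every $G_i\subseteq B$. Your fallback, via a lower bound for Steiner partitions and Helly-type reasoning, points in the wrong direction on both counts.
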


\paragraph{Organization of the paper.} In Section~\ref{sec:upper-d}
 we present the upper bounds for the $(p,q)$-theorem for a general dimension $d$. The results in dimension $1$ (i.e., $s$-intervals) are presented in Section~\ref{sec:one-dimensional}. The lower bound results are presented in Section~\ref{sec:lower}. The proof of the upper bound in Theorem~\ref{thm:one-dimensional-polylog} is presented in Appendix~\ref{app:one-dimensional-bootstrapping} and the proof of Theorem~\ref{thm:colorful-fh} is presented in Appendix~\ref{app:colorful-fh}.

\section{Upper bounds in dimensions \texorpdfstring{$d\ge2$}{d >= 2}}\label{sec:upper-d}

\subsection{The basic bound and the case \texorpdfstring{$q=d+1$}{q = d + 1}}

The certificate argument below is inspired by the lexicographic arguments of~\cite{KatchalskiLiu79}. We use the lexicographic order on $\R^d$. For points
$x=(x_1,\ldots,x_d)$ and $y=(y_1,\ldots,y_d)$, we write
$x<_{\mathrm{lex}}y$ if, at the first coordinate in which $x$ and $y$
differ, the coordinate of $x$ is smaller. If $K\subseteq\R^d$ is
nonempty and compact, its lexicographic minimum is the unique point
$v\in K$ such that no point $x\in K$ satisfies $x<_{\mathrm{lex}}v$.

We shall use the following standard consequence of Helly's theorem;
see~\cite[Section~8.1]{Matousek02}.

\begin{lemma}[Lexicographic minimum]\label{lem:lex}
Let $m\geq d \geq 1$ and let $K_1,\ldots,K_m\subseteq\R^d$ be compact convex sets with nonempty
intersection, and let $v$ be the lexicographic minimum of
$K_1\cap\cdots\cap K_m$. Then there is a set $I\subseteq[m]$ with
$|I|\le d$ such that $v$ is also the lexicographic minimum of
$\bigcap_{i\in I}K_i$.
\end{lemma}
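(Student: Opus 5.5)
The plan is the standard Helly-type argument via open halfspaces ``below'' $v$. For each $i\in[m]$ let
\[
U_i = K_i \cap \{x\in\R^d : x<_{\mathrm{lex}} v\}.
\]
Since $v$ is the lexicographic minimum of $K_1\cap\cdots\cap K_m$, we have $\bigcap_{i\in[m]}U_i=\emptyset$. The aim is to find $I$ with $|I|\le d$ and $\bigcap_{i\in I}U_i=\emptyset$. Once we have such an $I$, no point of $\bigcap_{i\in I}K_i$ is lexicographically smaller than $v$. Since $v$ itself lies in $\bigcap_{i\in I}K_i$, and this set is compact so its lexicographic minimum exists, that minimum is exactly $v$. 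If $m=d$ we may simply take $I=[m]$, so assume $m>d$.

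The sets $\{x<_{\mathrm{lex}}v\}$ are convex but neither open nor closed, so Helly's theorem cannot be applied directly. To get around this, I would use the usual perturbation/compactness device from \cite[Section~8.1]{Matousek02}. Suppose, for contradiction, that every $d$ of the sets $U_i$ have a common point. Add the convex set $H=\{x: x<_{\mathrm{lex}} v\}$ as an extra member, and consider the sets $K_i\cap H$, which are just the $U_i$. Helly's theorem in $\R^d$ needs $(d+1)$-wise intersections, but we only know $d$-wise intersections of the $U_i$. The missing dimension will come from $H$.

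First I would decompose $H$ into ``layers''. We have
\[
H=\bigcup_{j=1}^{d} H_j,\qquad H_j=\{x: x_1=v_1,\ldots,x_{j-1}=v_{j-1},\ x_j<v_j\},
\]
and each $H_j$ is convex and contained in an affine flat of dimension $d-j+1$. The argument then proceeds by induction on $d$, working on the flat $L=\{x_1=v_1\}$ and on the open halfspace $\{x_1<v_1\}$.

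Suppose that $\bigcap_{i\in I}K_i$ meets $\{x_1<v_1\}$ for every $I$ with $|I|\le d$. I would apply Helly's theorem to the convex sets $K_i\cap\{x_1\le v_1-\eta\}$ for small $\eta>0$. Finiteness of the family lets one choose a single $\eta$ that works for all $d$-subsets simultaneously. The remaining issue is the $(d+1)$-wise condition, which is where the $d$-wise hypothesis needs extra care.

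A cleaner route is to use the separation formulation. Consider the point $v$ and the convex sets $K_i$. The lexicographic minimum $v$ of the intersection is characterized by the condition that the cone $\{x<_{\mathrm{lex}}v\}$ misses the intersection. I would instead induct as follows. Let $a=\min\{x_1: x\in\bigcap_{i} K_i\}=v_1$. By the LP/Helly-type fact that the minimum of a linear function over an intersection of $m$ compact convex sets is attained by some $d$ of them, there is $J$ with $|J|\le d$ and $\min_{\bigcap_J K_i}x_1=v_1$. I would prove this fact by applying Helly to the sets $K_i$ together with the halfspace $\{x_1\le v_1-\eta\}$, and then use compactness as $\eta\to 0$.

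Then I would restrict to the flat $\{x_1=v_1\}$, which has dimension $d-1$. There I apply induction to the sets $K_i\cap\{x_1=v_1\}$, possibly combined with the $J$ sets. The difficulty is that the counts add, giving $d+(d-1)+\cdots$ instead of $d$.

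The main obstacle is exactly obtaining the sharp bound $|I|\le d$ rather than something quadratic. The fix I would try first is the standard Helly argument with one extra ``virtual'' set. Assume, for contradiction, that every subfamily of size $d$ has intersection containing a point $<_{\mathrm{lex}}v$. Then every $d$ of the sets $K_i$, together with the convex set $H$, have a common point. Helly's theorem for the $m+1$ convex sets $K_1,\ldots,K_m,H$ would then give a point of $\bigcap_i K_i\cap H$, a contradiction, provided Helly's theorem holds for arbitrary (not necessarily closed) convex sets in a finite family. It does, because Radon's proof needs no topology. So the real content is checking that every $(d+1)$-subfamily of $\{K_1,\ldots,K_m,H\}$ intersects:
\begin{itemize}
\item Subfamilies containing $H$ intersect by the assumption.
\item Subfamilies consisting of $d+1$ of the $K_i$ intersect because all the $K_i$ share $v$.
\end{itemize}
Hence no induction is needed, and the expected obstacle dissolves once one notes that Helly applies to the non-closed convex set $H$.
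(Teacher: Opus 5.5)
Your final argument is correct and is the standard proof from \cite[Section~8.1]{Matousek02}, which the paper cites instead of proving the lemma. If every $d$ of the $K_i$ met $H=\{x: x<_{\mathrm{lex}}v\}$, then every $d+1$ members of $\{K_1,\ldots,K_m,H\}$ would intersect (those without $H$ share $v$), and Helly's theorem for finite families of arbitrary convex sets would place a point of $H$ in $\bigcap_i K_i$, a contradiction. The earlier detours (layers, induction, $\eta$-perturbation) can be deleted. You should add the one-line check that $H$ is convex: a convex combination of the lexicographically negative vectors $x-v$ and $y-v$ is lexicographically negative.
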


From this point we follow the proof steps of \cite{AlonKleitman92} mentioned above, and improve the first two steps, inspired by \cite{KST18}. For a finite family $\F$, let $N_q(\F)$ be the number of intersecting $q$-tuples, and let $m(\F)$ be the maximum number of members containing one point. The following certificate count is the main geometric input.

\begin{lemma}\label{lem:count}
Let $\F=\{F_1,\ldots,F_n\}$ be a family of $s$-convex sets in $\R^d$, and let $q\ge d+1$. If $m=m(\F)$, then $N_q(\F)\le s^d\binom nd\binom{m-d}{q-d}$. The same statement holds for labelled multisets, where $m(\F)$ counts members with multiplicity.
\end{lemma}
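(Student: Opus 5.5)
We argue by a certificate (charging) count modelled on the lexicographic proof of fractional Helly. Write each $F_i=\bigcup_{j=1}^{s}K_{i,j}$ with $K_{i,j}$ compact convex (pad with repeated components if $F_i$ has fewer than $s$). Fix an intersecting $q$-tuple $T=\{F_{i_1},\ldots,F_{i_q}\}$ and a point $x\in\bigcap_{t}F_{i_t}$. For each $t$, choose a component $K_{i_t,j_t}$ containing $x$; the convex sets $K_{i_1,j_1},\ldots,K_{i_q,j_q}$ then have nonempty intersection. Among all such choices of components, pick one canonically, say the one whose common intersection has the lexicographically smallest lexicographic minimum, ties broken by a fixed order on component-selections. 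Let $v_T$ be that lexicographic minimum. By Lemma~\ref{lem:lex}, there is a set $I\subseteq\{i_1,\ldots,i_q\}$ with $|I|\le d$ (pad to exactly $d$ if needed, using $q\ge d+1>d$) such that $v_T$ is already the lexicographic minimum of $\bigcap_{i\in I}K_{i,j_i}$.

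We charge $T$ to the \emph{certificate} consisting of the $d$-set $I$ together with the component indices $(j_i)_{i\in I}$. There are at most $\binom nd s^d$ certificates. A certificate determines the point $v=v_T$, namely the lexminimum of the $d$ chosen components. The key claim is that every member of $T$ contains $v$: indeed $v_T\in\bigcap_t K_{i_t,j_t}\subseteq\bigcap_t F_{i_t}$. Hence the remaining $q-d$ members of $T$ lie among the members of $\F\setminus\{F_i:i\in I\}$ containing $v$. Since at most $m$ members contain $v$ and the $d$ members in $I$ are among them, there are at most $m-d$ candidates, giving at most $\binom{m-d}{q-d}$ completions per certificate. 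The map $T\mapsto$ certificate is a function, because of the canonical choice; the completion count does not even need injectivity beyond this. Summing over certificates gives $N_q(\F)\le s^d\binom nd\binom{m-d}{q-d}$.

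The main point to get right is the well-definedness and the padding. If $|I|<d$, we add arbitrary further indices from $T$ to reach size $d$; we may use their component indices $j_i$ from the chosen selection. The lexminimum of the enlarged intersection is still $v_T$, since $v_T$ lies in all those components and enlarging the family only shrinks the intersection. Thus the point associated with the certificate is still $v$, and the count above stands.

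For labelled multisets the same argument goes through verbatim. Members are indexed by $[n]$ with possible repeats, the certificate is a $d$-subset of indices, and $m$ counts indices whose set contains $v$ with multiplicity, so the completion bound $\binom{m-d}{q-d}$ is unchanged.
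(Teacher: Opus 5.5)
Your certificate argument is the paper's own. Choosing the component selection with the lexicographically smallest minimum just makes $v_T$ the lexicographic minimum of $\bigcap_t F_{i_t}$, which is the point the paper uses, and your padding step and the $\binom{m-d}{q-d}$ completion count match it. The one genuine gap is your first sentence, where you write each $F_i$ as a union of \emph{compact} convex sets. Here an $s$-convex set is a union of at most $s$ arbitrary convex sets. For open or unbounded components (an open ball, a closed half-space) the lexicographic minimum need not exist, so neither $v_T$ nor Lemma~\ref{lem:lex} is available. As written, your proof covers only the compact case.

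The paper closes this with a reduction made before the count. For each intersecting $q$-tuple $T$, fix a point $x_T\in\bigcap_{F\in T}F$ and, in each member of $T$, one component containing $x_T$. Then replace every component of every $F_i$ by the convex hull of the finitely many points $x_T$ assigned to it (possibly empty). The new components are polytopes contained in the old ones. Hence every intersecting $q$-tuple stays intersecting, $n$ and $s$ are unchanged, and $m$ cannot increase, so the inequality for the new family implies it for the original. This replacement must be made once and globally, not separately for each tuple. Your count needs a certificate to determine a single point $v$ regardless of which tuple produced it, and that requires the components to be fixed in advance. With this step inserted, the rest of your argument goes through verbatim.
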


\begin{proof}
Write every $F_i$ as a union of at most $s$ convex components. We may assume that the components are compact. Indeed, for every intersecting $q$-tuple $I$, choose a point $x_I\in\bigcap_{i\in I}F_i$ and, in each $F_i$ with $i\in I$, one component containing $x_I$. Replace every component by the convex hull of the finitely many points assigned to it. All intersecting $q$-tuples remain intersecting, and $m$ does not increase.

For every intersecting $q$-tuple $I$, let $v_I$ be the lexicographic minimum of its intersection. Choose one component of each member of $I$ containing $v_I$. Lemma~\ref{lem:lex} shows that $d$ of these components already determine $v_I$; if fewer are needed, add arbitrary indices from $I$. Thus $I$ receives one of at most $s^d\binom nd$ certificates.

A fixed certificate determines one point $v$. Every tuple carrying it contains the $d$ recorded sets, and its remaining $q-d$ sets must also contain $v$. At most $m-d$ further members contain $v$, so the certificate extends to at most $\binom{m-d}{q-d}$ tuples.
For a labelled multiset, regard all labelled copies as distinct indexed members and count $m$ with multiplicity. The argument above then applies verbatim.
\end{proof}

As an immediate consequence of the certificate count, we obtain Theorem~\ref{thm:fractional-helly}, which is not used in the proofs of the $(p,q)$ results but is of independent interest.

\begin{proof}[Proof of Theorem~\ref{thm:fractional-helly}]
Put $\F=\{F_1,\ldots,F_n\}$ and let $m=m(\F)$. By the hypothesis and Lemma~\ref{lem:count}, applied with $q=d+1$, we have $\alpha\binom n{d+1}\le N_{d+1}(\F)\le s^d\binom nd(m-d)$. Hence $m\ge d+\alpha(n-d)/((d+1)s^d)\ge\alpha n/((d+1)s^d)$, as required.
\end{proof}

\begin{remark}\label{rem:combined-fh}
If $\beta_d$ is any fractional Helly function for ordinary convex sets in $\R^d$, the direct component reduction and Theorem~\ref{thm:fractional-helly} together give $\beta_{d,s}(\alpha)\ge\max\{\beta_d(\alpha/s^{d+1}),\alpha/((d+1)s^d)\}$. The first term can be better for small $s$; the second has the better order in $s$.
\end{remark}

Theorem~\ref{thm:colorful-fh} gives a colorful analogue of Theorem~\ref{thm:fractional-helly}. It is not used in the $(p,q)$ arguments; its proof is deferred to Appendix~\ref{app:colorful-fh}.

\medskip 

 We use the following Tur\'an-type theorem of de Caen~\cite{deCaen83}: If $\mathcal H$ is a $q$-uniform hypergraph on $n$ vertices with no independent set of size $p$, where $n\ge p\ge q$, then
$|E(\mathcal H)|\ge \frac{n-p+1}{n-q+1}\frac{\binom{n}{q}}{\binom{p-1}{q-1}}$.
Here, an independent set is a set of vertices containing no hyperedge. Combining this result with Lemma~\ref{lem:count} gives the following depth estimate.

\begin{lemma}[A deep point]\label{lem:depth}
Let $p\ge q\ge d+1$, and let $\F$ be a family of $n\ge2p$ $s$-convex sets satisfying the $(p,q)$-property. Then some point belongs to at least $c_d qn/(s^{d/(q-d)}p^{(q-1)/(q-d)})$ members of $\F$.
\end{lemma}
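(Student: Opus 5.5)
The plan is to sandwich $N_q(\F)$ between the lower bound from de Caen's theorem and the upper bound from Lemma~\ref{lem:count}, and then solve for $m=m(\F)$.

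\textbf{Lower bound.} Consider the $q$-uniform hypergraph on $\F$ whose edges are the intersecting $q$-tuples. The $(p,q)$-property says exactly that this hypergraph has no independent set of size $p$. Since $n\ge 2p$, we have $\frac{n-p+1}{n-q+1}\ge \frac12$. De Caen's bound therefore gives
\[
N_q(\F)\ \ge\ \tfrac12\binom nq\Big/\binom{p-1}{q-1}.
\]

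\textbf{Combining with the upper bound.} Lemma~\ref{lem:count} gives $N_q(\F)\le s^d\binom nd\binom{m-d}{q-d}$. Comparing the two bounds yields
\[
\binom{m-d}{q-d}\ \ge\ \frac{\binom nq}{2s^d\binom nd\binom{p-1}{q-1}}.
\]
We may assume $m\ge q$, since otherwise $N_q(\F)=0$, contradicting the lower bound.

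\textbf{Estimating the binomials.} We use the standard estimates $(a/b)^b\le\binom ab\le(ea/b)^b$, together with $\binom nq/\binom nd=\binom{n-d}{q-d}/\binom qd$ (from $\binom nq\binom qd=\binom nd\binom{n-d}{q-d}$). Writing $k=q-d\ge1$, this gives:
\begin{itemize}
\item left-hand side: at most $(em/k)^k$;
\item $\binom{n-d}{k}\ge((n-d)/k)^k\ge (n/(2k))^k$, using $n\ge 2p\ge 2q$ so that $n-d\ge n/2$;
\item $\binom qd\le (eq/d)^d$ and $\binom{p-1}{q-1}\le (ep/q)^{q-1}$.
\end{itemize}
Substituting and taking $k$-th roots gives
\[
m\ \ge\ \frac{n}{2e}\cdot\Big(2s^d(eq/d)^d(ep/q)^{q-1}\Big)^{-1/k}.
\]
The factor $2^{-1/k}$ is at least $1/2$. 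The factor $(ep/q)^{-(q-1)/k}$ equals $p^{-(q-1)/k}\cdot (q/e)^{(q-1)/k}$, and $s^{-d/k}$ is already in the desired form. So it remains to show that
\[
(q/e)^{(q-1)/k}\,(eq/d)^{-d/k}\ \ge\ c_d\, q.
\]

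\textbf{The main obstacle: bookkeeping of the powers of $q$.} This is the delicate step. We have
\[
\frac{q-1}{k}=1+\frac{d-1}{k},
\]
so $q^{(q-1)/k}\cdot q^{-d/k}=q^{1-1/k}$. This falls short of the needed factor $q$ by $q^{1/k}$. For $k$ large this loss is bounded, since $q^{1/k}=O(1)$ once $k\ge q/2$, i.e.\ when $q\ge 2d$. For small $k$, i.e.\ $q$ close to $d$, we have $q\le 2d$, so $q$ is $O_d(1)$ and any power of it can be absorbed into $c_d$. The constants $e^{\pm\cdot/k}$ and $d^{d/k}$ are likewise bounded in terms of $d$, and are absorbed into $c_d$.

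So the only genuine care needed is to handle the exponents uniformly by splitting into the two cases $q\le 2d$ and $q>2d$. Alternatively, one can use the sharper estimate $\binom{p-1}{q-1}\le (ep/q)^{q-1}$, which already incorporates $q$ correctly. Either way we obtain $m\ge c_d\,qn/(s^{d/(q-d)}p^{(q-1)/(q-d)})$, as claimed.
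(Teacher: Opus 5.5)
Your proof is correct and follows the paper's argument: compare de Caen's lower bound (using $(n-p+1)/(n-q+1)\ge 1/2$) with the certificate count of Lemma~\ref{lem:count} and solve for $m$. The only difference is bookkeeping. The paper bounds $\binom nq/\binom{p-1}{q-1}\ge \frac nq (n/p)^{q-1}$ and recovers the factor $q$ from $((q-d)!)^{1/(q-d)}\ge c(q-d)\ge cq/(d+1)$, whereas you use $\binom nq\binom qd=\binom nd\binom{n-d}{q-d}$ with the standard $(a/b)^b\le\binom ab\le(ea/b)^b$ estimates, and your $q^{1/(q-d)}\le d+1$ loss is indeed absorbed into $c_d$. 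Two small remarks. Your bound $\binom{p-1}{q-1}\le(ep/q)^{q-1}$ is not literally the standard estimate, which gives $(e(p-1)/(q-1))^{q-1}$. It does hold for $q\ge2$, and a constant-factor loss there would disappear into $c_d$ after taking $(q-d)$-th roots anyway. Your closing ``alternatively'' sentence is redundant, since you had already used that estimate.
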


\begin{proof}
Let $\mathcal H$ be the $q$-uniform hypergraph whose vertex set is $[n]$, and in which a set $I\in\binom{[n]}{q}$ is a hyperedge if and only if $\bigcap_{i\in I}F_i\ne\emptyset$. Thus $|E(\mathcal H)|=N_q(\F)$. Since $\F$ satisfies the $(p,q)$-property, every set of $p$ vertices of $\mathcal H$ contains a hyperedge. Equivalently, $\mathcal H$ has no independent set of size $p$.

De Caen's theorem therefore gives
$N_q(\F)\ge \frac{n-p+1}{n-q+1}\frac{\binom{n}{q}}{\binom{p-1}{q-1}}$.
Since $n\ge2p$, we have $(n-p+1)/(n-q+1)\ge1/2$. Moreover,
$\frac{\binom{n}{q}}{\binom{p-1}{q-1}}=\frac{n}{q}\prod_{j=1}^{q-1}\frac{n-j}{p-j}\ge\frac{n}{q}\left(\frac{n}{p}\right)^{q-1}$,
where the inequality follows from $n\ge p$. Consequently,
$N_q(\F)\ge n^q/(2qp^{q-1})$.

Let $m=m(\F)$ be the maximum number of members of $\F$ containing one point. By Lemma~\ref{lem:count},
$$N_q(\F)\le s^d\binom{n}{d}\binom{m-d}{q-d}.$$
Using $\binom{n}{d}\le n^d/d!$ and $\binom{m-d}{q-d}\le m^{q-d}/(q-d)!$, we obtain
$N_q(\F)\le s^dn^dm^{q-d}/(d!(q-d)!)$.
Comparing the lower and upper bounds for $N_q(\F)$ implies
$$m^{q-d}\ge\frac{d!(q-d)!}{2q}\frac{n^{q-d}}{s^dp^{q-1}}.$$

It remains to estimate the $(q-d)$-th root of the first factor. Put $r=q-d$. By Stirling's estimate, $(r!)^{1/r}\ge c r$ for an absolute constant $c>0$. For fixed $d$, the factor $(d!/(2q))^{1/r}$ is bounded below by a positive constant depending only on $d$, and $r=q-d\ge q/(d+1)$. Therefore
$\left(\frac{d!(q-d)!}{2q}\right)^{1/(q-d)}\ge c_dq$.
It follows that
$m\ge c_d qn/(s^{d/(q-d)}p^{(q-1)/(q-d)})$,
and a point contained in $m$ members of $\F$ has the required depth.
\end{proof}

We use the standard duality lemma from the Alon--Kleitman proof; see~\cite[Lemma~2.4]{KST18}.

\begin{lemma}[LP duality]\label{lem:lp}
Let $0<\gamma\le1$, and let $\F$ be a finite family of sets. Suppose that for every finite multiset $\F'$ of members of $\F$, some point belongs to at least $\gamma|\F'|$ members of $\F'$, counted with multiplicity. Then there is a finite multiset $P$ of points such that every $F\in\F$ contains at least $\gamma|P|$ points of $P$, counted with multiplicity.
\end{lemma}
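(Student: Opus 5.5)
This is the standard LP-duality (minimax) argument, applied to a zero-sum game between a ``point player'' and a ``set player''. I will argue by contradiction via a separating hyperplane, or equivalently via von Neumann's minimax theorem.

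\emph{Reduction to finitely many points.} Write $\F=\{F_1,\ldots,F_n\}$. Partition $\R^d$ (or whatever ground set) into the atoms of the Venn diagram of $F_1,\ldots,F_n$. There are at most $2^n$ nonempty atoms, and two points in the same atom lie in exactly the same members of $\F$. Choose one representative point from each atom, obtaining a finite set $X=\{x_1,\ldots,x_k\}$. Replacing any point by the representative of its atom changes no incidences. Hence the hypothesis still holds when the deep point is required to lie in $X$. Form the $0/1$ incidence matrix $A\in\{0,1\}^{n\times k}$ with $A_{ij}=1$ iff $x_j\in F_i$.

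\emph{Minimax.} Consider the game in which the set player chooses a probability vector $y\in\Delta_n$ and the point player chooses $z\in\Delta_k$, with payoff $y^{T}Az$. I claim that for every $y\in\Delta_n$ there is $j$ with $(y^{T}A)_j\ge\gamma$.
\begin{itemize}
\item For rational $y$, write $y=(m_1,\ldots,m_n)/M$ with integers $m_i\ge0$. Let $\F'$ be the multiset containing $F_i$ with multiplicity $m_i$. The hypothesis gives a point, which we may take to be some $x_j$, lying in at least $\gamma M$ members of $\F'$ counted with multiplicity. That is exactly $(y^{T}A)_j\ge\gamma$.
\item For general $y$, the function $y\mapsto\max_j (y^{T}A)_j$ is continuous and rationals are dense in $\Delta_n$. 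So the inequality $\max_j(y^TA)_j\ge\gamma$ extends to all of $\Delta_n$.
\end{itemize}
Therefore $\min_y\max_z y^{T}Az\ge\gamma$. By von Neumann's minimax theorem (LP duality), $\max_z\min_i (Az)_i\ge\gamma$. So there is $z\in\Delta_k$ with $\sum_{j:x_j\in F_i}z_j\ge\gamma$ for every $i$.

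\emph{Rationalizing $z$.} This is the only mildly delicate step. The set of optimal $z$ is a nonempty polytope defined by linear inequalities with rational (integer) coefficients, $\{z\in\Delta_k: Az\ge\gamma\mathbf 1\}$, provided $\gamma$ is rational. It therefore has a rational vertex. Write that vertex as $z=(t_1,\ldots,t_k)/T$ with integers $t_j$, and let $P$ be the multiset containing $x_j$ with multiplicity $t_j$. Then $|P|=T$, and each $F_i$ contains $\sum_{x_j\in F_i}t_j\ge\gamma T$ points of $P$, as required.

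If $\gamma$ is irrational, run the same argument with the exact optimum value $v^*\ge\gamma$ of the game. Because $A$ is an integer matrix, $v^*$ is rational, and so is an optimal vertex. Then each $F_i$ contains at least $v^*|P|\ge\gamma|P|$ points of $P$.
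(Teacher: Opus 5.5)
Your proof is correct and is the standard LP-duality (minimax) argument, which the paper does not reprove but cites from~\cite[Lemma~2.4]{KST18}. The details you supply are the ones that argument needs: the reduction to finitely many representative points, and extracting a rational optimal strategy, including for irrational $\gamma$ (which does occur in the paper's applications). The usual Alon--Kleitman write-up phrases the same duality as $\tau^*(\F)=\nu^*(\F)$ and applies the hypothesis only once, to a rational optimal fractional matching; this avoids your density/continuity step but is only a cosmetic difference.
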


\begin{proposition}\label{prop:basic}
Let $d\ge2$, $p\ge q\ge d+1$, and $\delta>0$. Then
\[
\HD_d^{(s)}(p,q)=O_{d,\delta}\!\left(\left(s^{q/(q-d)}p^{(q-1)/(q-d)}\right)^{\rho_d+\delta}\right).
\]
\end{proposition}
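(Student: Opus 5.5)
The plan follows the Alon--Kleitman scheme: depth from Lemma~\ref{lem:depth}, then LP duality (Lemma~\ref{lem:lp}), then a weak $\eps$-net. The one adjustment is that the net must be taken for the convex \emph{components}, not for the $s$-convex sets themselves.

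Let $\F$ be a finite family of $s$-convex sets with the $(p,q)$-property, and put
\[
\gamma = c_d\, q\big/\big(s^{d/(q-d)}p^{(q-1)/(q-d)}\big),
\]
the depth fraction from Lemma~\ref{lem:depth}. To apply Lemma~\ref{lem:lp}, I must show that every finite multiset $\F'$ of members of $\F$ has a point of depth at least $\gamma'|\F'|$ for some $\gamma'$ comparable to $\gamma$.

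If $\F'$ has at least $2p$ elements, I treat it as a labelled multiset. It still satisfies the $(p,q)$-property: among $p$ labelled copies, either some set repeats $q$ times, which is trivially an intersecting $q$-tuple, or the copies come from distinct members, where the hypothesis applies. I will check that the proofs of Lemma~\ref{lem:depth} and Lemma~\ref{lem:count} go through verbatim for labelled multisets; Lemma~\ref{lem:count} already states this. That case gives depth $\gamma|\F'|$.

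If $|\F'|<2p$, the $(p,q)$-property with $|\F'|\ge p$, or the observation that repeated copies intersect, gives a point in at least $q$ members of $\F'$. Hence the depth is at least $q/(2p)$. This beats $\gamma$ up to a constant, since $s^{d/(q-d)}p^{(q-1)/(q-d)}\ge p$. The multisets with $|\F'|<p$ are a degenerate case. I will handle it by the standard device of first applying $(p,q)$ to the underlying distinct sets: if $\F$ has at most $p-1$ members it is trivially pierced by $p-1$ points, a bound dominated by the target. Alternatively, I will note that it suffices to verify the hypothesis of Lemma~\ref{lem:lp} for multisets of size at least $p$, by scaling multiplicities.

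Lemma~\ref{lem:lp} then yields a multiset $P$ of points such that every $F\in\F$ contains at least $\gamma'|P|$ points of $P$. Since $F$ is a union of at most $s$ convex components, one component $K$ of $F$ contains at least $\gamma'|P|/s$ points of $P$.

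Take a weak $\eps$-net $N$ for $P$ with $\eps=\gamma'/s$. Then $N$ meets $K\subseteq F$, so $N$ is a transversal of $\F$. By Rubin's bound,
\[
|N| = O_{d,\delta}\big((s/\gamma')^{\rho_d+\delta}\big) = O_{d,\delta}\big((s^{1+d/(q-d)}p^{(q-1)/(q-d)}/q)^{\rho_d+\delta}\big).
\]
Since $1+d/(q-d)=q/(q-d)$ and the factor $1/q$ only helps, this gives the claimed bound.

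The main obstacle is this bookkeeping: making sure the depth lemma holds for multisets of every size with a uniform fraction. The key ingredient is the $1/s$ loss when passing to components, which is exactly what turns $s^{d/(q-d)}$ into $s^{q/(q-d)}$.
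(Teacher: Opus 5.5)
Your overall scheme (depth via Lemma~\ref{lem:depth}, LP duality, a $1/s$ loss when passing to a component, then a weak $\eps$-net) matches the paper's. The step that feeds the multiset into Lemma~\ref{lem:depth}, however, is false.

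A labelled multiset $\F'$ of members of $\F$ need \emph{not} satisfy the $(p,q)$-property. Your dichotomy ``either some set repeats $q$ times, or the copies come from distinct members'' is not exhaustive: $p$ copies can come from fewer than $p$ distinct sets, each repeated fewer than $q$ times.

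Here is a concrete counterexample. Let $\F$ consist of $p-q$ pairwise disjoint sets together with $q$ sets through a common point. This family has the $(p,q)$-property. Take $q=p/2\ge3$ and $s=1$, and form the multiset in which each disjoint set has multiplicity $M$. Its maximum depth fraction is $1/(p-q)=2/p$. Your $\gamma=c_dq/(s^{d/(q-d)}p^{(q-1)/(q-d)})$ tends to $c_d/2$ as $p\to\infty$. So the depth fraction you claim, including the factor $q$, genuinely fails for multisets.

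The same error affects your small case. When fewer than $p$ distinct sets occur, a depth fraction of $q/(2p)$ is not available; pigeonhole only gives $1/p$.

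The fix is the one the paper uses. Among $(p-1)(q-1)+1$ copies there are either $p$ distinct sets or $q$ copies of one set. Hence $\F'$ has the $((p-1)(q-1)+1,q)$-property. After scaling multiplicities so that Lemma~\ref{lem:depth} applies with $p'=(p-1)(q-1)+1\le pq$, you get a fraction at least
$c_dq/(s^{d/(q-d)}(pq)^{(q-1)/(q-d)})=c_dq^{-(d-1)/(q-d)}s^{-d/(q-d)}p^{-(q-1)/(q-d)}$.
For $q\ge d+1$ one has $q^{(d-1)/(q-d)}\le(d+1)^{d-1}$. So $\gamma=c_ds^{-d/(q-d)}p^{-(q-1)/(q-d)}$ is a valid depth fraction for every multiset.

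The factor $1/q$ that you said ``only helps'' is therefore not there. Instead you lose a factor bounded in terms of $d$, and the stated bound still follows after the component and weak-net steps exactly as you wrote them.
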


\begin{proof}
Let $\F$ satisfy the $(p,q)$-property, and let $\F'$ be any labelled multiset of its members. The multiset $\F'$ satisfies the $((p-1)(q-1)+1,q)$-property: among that many copies there are either $p$ distinct original sets or $q$ copies of one set. Multiplying all multiplicities by a sufficiently large common integer, if necessary, allows us to apply Lemma~\ref{lem:depth} without changing any of the relevant fractions. It gives a point contained in a fraction at least
$c_dq/\bigl(s^{d/(q-d)}((p-1)(q-1)+1)^{(q-1)/(q-d)}\bigr)$
of $\F'$. Since $((p-1)(q-1)+1)\le pq$, this fraction is at least
$c_dq^{-(d-1)/(q-d)}s^{-d/(q-d)}p^{-(q-1)/(q-d)}$.
As $q\ge d+1$, the factor $q^{-(d-1)/(q-d)}$ is bounded below by a positive constant depending only on $d$. Thus, after adjusting $c_d$, the resulting fraction is at least
$\gamma:=c_ds^{-d/(q-d)}p^{-(q-1)/(q-d)}$.

Lemma~\ref{lem:lp} now gives a finite multiset $P$ of points such that every $F\in\F$ contains at least $\gamma|P|$ points of $P$, counted with multiplicity. Write $F$ as the union of at most $s$ convex components. By averaging, one of these components contains at least $\gamma|P|/s$ points of $P$. Hence every $F\in\F$ contains a convex component containing at least $\eps|P|$ points, where
$\eps=\gamma/s=c_ds^{-q/(q-d)}p^{-(q-1)/(q-d)}$.

A weak $\eps$-net for $P$ with respect to convex sets meets this component, and therefore meets $F$. Rubin's weak-net theorem supplies such a net of size
$$
O_{d,\delta}(\eps^{-(\rho_d+\delta)}) = O_{d,\delta}\!\left(\left(s^{q/(q-d)}p^{(q-1)/(q-d)}\right)^{\rho_d+\delta}\right).
$$
This net is a transversal for $\F$, proving the proposition.
\end{proof}

\begin{proof}[Proof of Theorem~\ref{thm:q=d+1}]
Set $q=d+1$ in Proposition~\ref{prop:basic}.
\end{proof}

\begin{remark}\label{rem:components}
Lemmas \ref{lem:count} and \ref{lem:depth} generalize the setting of convex sets from \cite{KST18}.
If one first replaces the $n$ original sets by all $sn$ convex components, the same count carries a factor $s^q$ rather than $s^d$, and the final component step loses another factor $s$. Thus the direct route replaces $s$ by $s^2$ in the large-$q$ bound. The lexicographic certificate therefore improves the final estimates by essentially a factor $s^{\rho_d}$. For $s=1$, Theorems~\ref{thm:q=d+1} and~\ref{thm:large-q} are the bounds obtained by combining the framework of~\cite{KST18} with Rubin's weak-net estimates.
\end{remark}

\subsection{The upper bound for large \texorpdfstring{$q$}{q}}

The improvement for large $q$ uses the following elementary dichotomy from~\cite{KST18}.

\begin{lemma}\label{lem:dichotomy}
Suppose that $\F$ satisfies the $(p,q)$-property. For $p'<p$ and $q'<q$, either $\F$ satisfies the $(p',q')$-property, or there is a subfamily $S\subseteq\F$ of size $p'$ with no $q'$ intersecting members, and then $\F\setminus S$ satisfies the $(p-p',q-q'+1)$-property.
\end{lemma}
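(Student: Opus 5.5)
The argument is a direct case split on whether $\F$ has the $(p',q')$-property. If it does, the first alternative holds and there is nothing to prove. Otherwise some $p'$ members of $\F$ contain no intersecting $q'$-tuple. Fix such a subfamily $S$ with $|S|=p'$. It remains to show that $\F\setminus S$ satisfies the $(p-p',q-q'+1)$-property.

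For this, take any $p-p'$ members $T\subseteq\F\setminus S$. Then $T\cup S$ consists of exactly $p$ distinct members of $\F$, since $T$ and $S$ are disjoint. By the $(p,q)$-property, $T\cup S$ contains an intersecting subfamily $Q$ with $|Q|=q$. Every subfamily of an intersecting family is intersecting, so $Q\cap S$ is an intersecting subfamily of $S$. By the choice of $S$ this forces $|Q\cap S|\le q'-1$. Consequently
\[
|Q\cap T|\ge q-(q'-1)=q-q'+1 .
\]
Any $q-q'+1$ members of $Q\cap T$ have nonempty intersection, because they lie in $Q$. Hence $T$ contains $q-q'+1$ intersecting members, which is the required property.

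The only points needing care are bookkeeping ones. The hypotheses $p'<p$ and $q'<q$ guarantee $p-p'\ge1$ and $q-q'+1\ge2$, so the new parameters make sense. If $|\F\setminus S|<p-p'$, the property holds vacuously, or is simply not needed in applications. I expect no genuine obstacle: the whole content is the counting step bounding $|Q\cap S|$ by $q'-1$, which follows directly from the choice of $S$.
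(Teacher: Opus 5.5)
Your proof is correct and matches the argument behind this lemma: the paper cites it from \cite{KST18} without writing out a proof, but it uses exactly your counting inline, for instance in the proofs of Lemmas~\ref{lem:concentration-cleanup} and~\ref{lem:one-dimensional-recurrence}. One small tidy-up: the case $|\F\setminus S|<p-p'$ cannot arise, because the $(p,q)$-property as defined in the paper presupposes $|\F|\ge p$, so $|\F\setminus S|=|\F|-p'\ge p-p'$ and no vacuous-case hedge is needed.
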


In the bootstrapping argument below, we lower $q$ in steps of $k$, and $p$ in proportional steps $k'=\lceil kp/q\rceil$, so that the ratio of the new parameters never exceeds $p/q$. Either we reach a pair with $q$ of order $k$, or the first step that fails, together with the dichotomy, leaves a family with the $(k',k+1)$-property and an exceptional subfamily that costs at most $p-q+1$ piercing points. Thus the geometric bound of Proposition~\ref{prop:basic} is applied only at the smaller scale $k$; Theorem~\ref{thm:large-q} then follows by choosing $k$ logarithmically so as to optimize the resulting bound.

\begin{proposition}[One bootstrapping step]\label{prop:bootstrap}
Let $d\ge2$, $p>q\ge d+1$, $\delta>0$, and choose an integer $k$ with $d\le k<q$. Put $t=k+1-d$. Then
\[
\HD_d^{(s)}(p,q)\le p-q+1+O_{d,\delta}\!\left(\left(s^{1+d/t}(kp/q)^{k/t}\right)^{\rho_d+\delta}\right).
\]
\end{proposition}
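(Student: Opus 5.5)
We iterate the dichotomy of Lemma~\ref{lem:dichotomy} along the sequence of pairs $(p_j,q_j)=(p-jk',\,q-jk)$, where $k'=\lceil kp/q\rceil$. Here $j=0,1,\ldots,J$ and $J$ is the largest index with $q_J\ge k+1$ (say). The choice of $k'$ keeps $p_j/q_j\le p/q$ roughly, since we remove at least $p/q$ times as many from $p$ as from $q$. Also $p_j>q_j$ is maintained, because $p-q\ge 0$ and $k'\ge k$.

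At step $j$ we ask whether $\F$ satisfies the $(p_{j+1},q_{j+1})$-property. Lemma~\ref{lem:dichotomy}, applied with $(p',q')=(p_{j+1},q_{j+1})$, gives one of two outcomes.
\begin{itemize}
\item If the property holds, we pass to step $j+1$.
\item If it fails, there is a subfamily $S$ of size $p_{j+1}$ with no $q_{j+1}$ intersecting members. Moreover, $\F\setminus S$ satisfies the $(p_j-p_{j+1},\,q_j-q_{j+1}+1)=(k',k+1)$-property.
\end{itemize}

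In the failing case, we pierce $S$ trivially with $|S|=p_{j+1}$ points, or better, we note the structure. Trivially $p_{j+1}\le p-q+1$ fails, so more care is needed. The intended accounting is that $S$ with $(p_{j+1},q_{j+1})$-type structure inherits $p_j$ vs.\ $q_j$. More precisely, $\F$ itself satisfies $(p_j,q_j)$ at this stage, and the standard trick from \cite{KST18} is that a family with the $(p_j,q_j)$-property in which $S$ is a set of $p_{j+1}$ members without $q_{j+1}$ intersecting ones is handled as follows. We pick $p_{j+1}-q_{j+1}+1$ members of $S$ and pierce each by its own point. The rest we argue are absorbed. Concretely, I would use the bound that any $p_{j+1}$ sets can be pierced by $p_{j+1}-q_{j+1}+1$ points plus those needed for the remaining $q_{j+1}-1$ sets. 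Those remaining sets are added to $\F\setminus S$, and I check that adding $q_{j+1}-1$ sets to a $(k',k+1)$-family preserves a $(k'+q_{j+1}-1,k+1)$-type property. Alternatively, and I expect this to be the main obstacle, one verifies $p_{j+1}-q_{j+1}\le p-q$ directly: $p_{j+1}-q_{j+1}=p-q-(j+1)(k'-k)\le p-q$. This suggests piercing $S$ with $p-q+1$ points is affordable if we instead pierce all but $q_{j+1}-1$ members of $S$ individually. The leftover $q_{j+1}-1$ members together with $\F\setminus S$ then need to satisfy a property of the form $(k'',k+1)$ with $k''=O(kp/q)$. This is what I'd try to establish, possibly by choosing the leftovers as a maximal intersecting-free structure.

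If the property never fails, then $\F$ satisfies $(p_J,q_J)$ with $q_J\in[k+1,2k]$ and $p_J\le (p/q)q_J+k'=O(kp/q)$. Weakening via the dichotomy once more yields a $(O(kp/q),k+1)$-property on all of $\F$ minus nothing.

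In all cases we reduce to a family with the $(P,k+1)$-property, where $P=O(kp/q)$, plus at most $p-q+1$ extra points. Applying Proposition~\ref{prop:basic} with $(P,k+1)$ gives exponents $s^{(k+1)/(k+1-d)}P^{k/(k+1-d)}=s^{1+d/t}(kp/q)^{k/t}$, since $t=k+1-d$. Constants raised to the power $k/t\le d+1$ are absorbed into $O_{d,\delta}$. This matches the claimed bound. The delicate step is the exact accounting that keeps the additive term at $p-q+1$ rather than $p-q+1+O(q)$.
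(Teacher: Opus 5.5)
Your skeleton matches the paper's: the pairs $(p-jk',q-jk)$ with $k'=\lceil kp/q\rceil$, the dichotomy producing a subfamily $S$ of size $p_{j+1}$ such that $\F\setminus S$ has the $(k',k+1)$-property, and Proposition~\ref{prop:basic} applied at $(O(kp/q),k+1)$. The gap is the step you flag yourself, piercing $S$ with $p-q+1$ points, which you leave open. Neither route you sketch closes it.

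Merging the $q_{j+1}-1$ leftover members of $S$ into $\F\setminus S$ does give a $(k'+q_{j+1}-1,k+1)$-property. However, $q_{j+1}$ can be of order $q$, so Proposition~\ref{prop:basic} would see $q$ instead of $kp/q$, and the bound is lost. The hoped-for $(O(kp/q),k+1)$-property of the merged family fails for an arbitrary choice of leftovers. Take $\F$ to be $p-q$ pairwise disjoint sets plus many sets through one common point. Then $S$ must contain almost all of the disjoint sets, so the leftovers can include many pairwise disjoint members, and $p-q$ can be much larger than $kp/q$. If instead you choose the leftovers to be intersecting, you do not need to merge them at all. In that case your count $p_{j+1}-q_{j+1}+1$ is the wrong one, since $S$ need not contain $q_{j+1}-1$ members with a common point.

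The missing idea is that no structure of $S$ is needed. Any subfamily of $\F$ with at most $p$ members can be pierced by $p-q+1$ points, using the $(p,q)$-property of $\F$ itself:
\begin{itemize}
\item If $|S|\le p-q$, pierce its members one by one.
\item Otherwise, add $p-|S|$ arbitrary members of $\F\setminus S$ to obtain $p$ sets. Some $q$ of them share a point, and at least $q-(p-|S|)=|S|-(p-q)$ of these lie in $S$.
\item One point pierces this intersecting part, and at most $p-q$ further points pierce the rest of $S$.
\end{itemize}
Meanwhile $\F\setminus S$ is left untouched with its $(k',k+1)$-property. The total is therefore $p-q+1$ plus the bound from Proposition~\ref{prop:basic}, as required.

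A smaller slip: because $k'\ge k$, the gap $p_j-q_j=p-q-j(k'-k)$ shrinks rather than being maintained. The degenerate pairs therefore need a brief separate remark.
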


\begin{proof}
Put $k'=\lceil kp/q\rceil$, $p_\ell=p-\ell k'$, and $q_\ell=q-\ell k$. Choose the largest $\ell$ such that $q_\ell>k$ and $\F$ satisfies the $(p_\ell,q_\ell)$-property.

If $q_{\ell+1}\le k$, then $k<q_\ell\le2k$ and $p_\ell\le(p/q)q_\ell\le2kp/q$. Proposition~\ref{prop:basic}, applied to $(p_\ell,q_\ell)$, gives the displayed bound.

Otherwise $\F$ satisfies $(p_\ell,q_\ell)$ but not $(p_{\ell+1},q_{\ell+1})$. Applying Lemma~\ref{lem:dichotomy} inside the pair $(p_\ell,q_\ell)$ produces a subfamily $S$ of size $p_{\ell+1}$ such that $\F\setminus S$ satisfies the $(k',k+1)$-property. Proposition~\ref{prop:basic} gives the displayed bound for $\F\setminus S$.

Finally, any subfamily $S$ of size at most $p$ can be pierced by $p-q+1$ points. If $|S|\le p-q$, pierce its members separately. Otherwise extend $S$ to $p$ members. Among the guaranteed $q$ intersecting sets, at least $|S|-(p-q)$ belong to $S$; pierce these by one point and the remaining at most $p-q$ members separately.
\end{proof}

\begin{proof}[Proof of Theorem~\ref{thm:large-q}]
Write $A=p/q$ and choose $k$ of order $d+\log(e sA)$. If $q\ge C_d\log(e sp)$, then $k<q$. With $t=k+1-d$, one has $s^{1+d/t}(Ak)^{k/t}=O_d(sA\log(e sA))$. Proposition~\ref{prop:bootstrap} now gives the theorem.
\end{proof}

\begin{remark}
The almost exact bound $p-q+2$ proved in~\cite{KST18} for very large $q$ uses the sharp Hadwiger--Debrunner theorem for convex sets. For general $s$-convex sets, not only does that final step have no direct analogue: Theorem~\ref{thm:large-q-lower} gives the lower bound $p-q+d(s-1)+1$. In dimension one, Theorem~\ref{thm:one-dimensional-polylog} shows that the correct additive order is linear in $s$ and gives the explicit upper bound $p-q+2s+1$ in its stated range.
\end{remark}

\section{The one-dimensional case}\label{sec:one-dimensional}

In $\R$, an $s$-convex set is an $s$-interval, that is, a union of at most $s$ compact intervals. 
An important difference between this setting and the setting of $d \geq 2$ is that families of intervals on the line have weak $\eps$-nets of size $O(1/\eps)$. Indeed, let $x_1\le\cdots\le x_n$ be the order statistics of a finite multiset of points, and put $m=\lceil\eps n\rceil$. The set $\{x_m,x_{2m},\ldots,x_{\lfloor n/m\rfloor m}\}$ clearly meets every interval containing at least $\eps n$ points.

The table below summarizes our results for $d=1$ in the main parameter ranges and their relation to earlier results.

\subsection{A general upper bound}

As we saw in the introduction, for $q=2$, the dependence on the number of components is nearly understood. 
For $q>2$, Zerbib~\cite{Zerbib19} proved that, for fixed $p\ge q>1$, every family of $s$-intervals with the $(p,q)$-property can be pierced by $O_{p,q}(s^{q/(q-1)})$ points, where the dependence on $p$ is polynomial. Theorem~\ref{thm:one-dimensional-intro} recovers the same exponent of $s$, with explicit linear dependence on $p$. Let us recall its statement.

\medskip \noindent \textbf{Theorem~\ref{thm:one-dimensional-intro}.}
For all $p\ge q\ge2$ and $s\ge1$, one has $\HD_1^{(s)}(p,q)=O\!\left(p s^{q/(q-1)}\right)$. If $p>q$ and $q\ge C\log(e sp)$, then
\[
\HD_1^{(s)}(p,q)\le p-q+1+O\!\left(s \cdot \frac{p}{q} \cdot \log \frac{e sp}{q}\right).
\]

\begin{center}
\small
\setlength{\tabcolsep}{5pt}
\renewcommand{\arraystretch}{1.15}

\begin{tabularx}{\textwidth}{@{}P{0.17\textwidth} Y Y P{0.14\textwidth}@{}}
\toprule
Parameter range & Earlier results & Bounds in this paper & Reference \\
\midrule

$q=2$
&
\cite{Kaiser97}: every family with the $(p,2)$-property can be pierced by at most $(s^2-s+1)(p-1)$ points; \cite{Matousek01}: some separated families with the $(p,2)$-property require $\Omega\!\left(s^2(p-1)/(\log s)^2\right)$ points.
&
The general theorem recovers $H_s(p,2)=O(ps^2)$.
&
Theorem~\ref{thm:one-dimensional-intro}
\\
\addlinespace[0.65em]

Fixed $p\ge q>2$, $s\to\infty$
&
\cite{Zerbib19}: $H_s(p,q)=O_{p,q}\!\left(s^{q/(q-1)}\right)$, polynomial dependence on $p$.
&
$H_s(p,q)=O\!\left(p\,s^{q/(q-1)}\right)$, with explicit linear dependence on $p$.
&
Theorem~\ref{thm:one-dimensional-intro}
\\
\addlinespace[0.65em]

$q\ge (p-q+1)\lceil\log_2 s\rceil+1$
&
\cite{Zerbib19}: $H_s(t,t)< t^{1/(t-1)}s^{t/(t-1)}+s$ for every $t\ge2$.
&
$p-q+s\le H_s(p,q)\le p-q+1+5s$.
&
Theorem~\ref{thm:one-dimensional-near-diagonal}
\\
\addlinespace[0.65em]

$q\ge C_0 s\log(2s)\log(ep)$
&
The earlier general bounds do not give matching dependence on $s$ in this range.
&
$p-q+s\le H_s(p,q)\le p-q+2s+1$.
&
Theorem~\ref{thm:one-dimensional-polylog}; upper bound in Appendix~\ref{app:one-dimensional-bootstrapping}
\\
\addlinespace[0.65em]

Fixed $s$, $q\ge C_s\log(ep)$
&
The earlier bounds above do not yield a concentration statement.
&
$H_s(p,q)$ is one of two consecutive values.
&
Theorem~\ref{thm:one-dimensional-concentration}
\\

\bottomrule
\end{tabularx}

\medskip
\textit{Summary of the one-dimensional bounds. Here $H_s(p,q)=\HD_1^{(s)}(p,q)$.}
\end{center}

\begin{proof}
For the first assertion, we repeat the proof of Proposition~\ref{prop:basic} with $d=1$, replacing Rubin's weak-net theorem by the linear weak-net bound for intervals. Lemmas~\ref{lem:count} and~\ref{lem:depth} remain valid in this case. Let $\F$ be a family of $s$-intervals satisfying the $(p,q)$-property, and let $\F'$ be any labelled multiset of members of $\F$. Put $p'=(p-1)(q-1)+1$. Then $\F'$ satisfies the $(p',q)$-property as before. After multiplying all multiplicities by a common integer if necessary, which does not change the relevant fractions, Lemma~\ref{lem:depth}, applied with parameters $p'$ and $q$, gives a point contained in a fraction at least $cq/(s^{1/(q-1)}p')$ of $\F'$. Since $p'\le pq$, this fraction is at least $c/(p s^{1/(q-1)})$, for an absolute constant $c>0$.

By Lemma~\ref{lem:lp}, there is a finite multiset $P$ of points such that every member of $\F$ contains at least this fraction of $P$. Each member is the union of at most $s$ intervals, so one of its interval components contains at least a fraction
$c/(p s^{1+1/(q-1)})=c/(p s^{q/(q-1)})$
of $P$. Taking $\eps=c/(p s^{q/(q-1)})$, a weak $\eps$-net for intervals has size $O(1/\eps)=O(p s^{q/(q-1)})$ and meets every member of $\F$. This proves the first assertion.

For the second assertion, repeat the proof of Proposition~\ref{prop:bootstrap} with $d=1$, using the first assertion above for the smaller parameter pairs. For every integer $1\le k<q$, the same dichotomy argument gives
$\HD_1^{(s)}(p,q)\le p-q+1+O(s^{1+1/k}kp/q)$:
when the process stops, the smaller pair has first parameter $O(kp/q)$ and second parameter at least $k+1$, so its contribution is at most $O(s^{1+1/k}kp/q)$, while the exceptional subfamily, if present, can be pierced by at most $p-q+1$ points.

Choose $k=\lceil\log(e sp/q)\rceil$. If $q\ge C\log(e sp)$ and $C$ is sufficiently large, then $k<q$. Moreover, $s^{1/k}=O(1)$ and $k=O(\log(e sp/q))$. Substituting this choice in the preceding estimate gives
$\HD_1^{(s)}(p,q)\le p-q+1+O(sp\log(e sp/q)/q)$,
as required.
\end{proof}

For $q=2$, the results of~\cite{Kaiser97} and~\cite{Alon98} give the same order $O(ps^2)$ as Theorem~\ref{thm:one-dimensional-intro}. For fixed $q>2$, Zerbib's~\cite{Zerbib19} explicit general estimate has leading dependence $p^{q/(q-1)}s^{q/(q-1)}$, together with an additional term of order $p^2s$; our estimate improves the leading dependence on $p$ to linear.

\subsection{The near-diagonal range}

The paper \cite{Zerbib19} also contains  
an improved upper bound for families of
$s$-intervals satisfying a diagonal $(t,t)$-condition. 
\begin{theorem}[Zerbib]\label{thm:zerbib-tt}
If a finite family $\mathcal H$ of $s$-intervals satisfies the $(t,t)$-property for an integer $t\ge2$, then $\tau(\mathcal H)<t^{1/(t-1)}s^{t/(t-1)}+s$.
\end{theorem}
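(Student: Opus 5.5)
The plan is a Kaiser-type topological argument that produces many members of $\mathcal H$, each confined to few gaps between piercing points, followed by a counting step in which the $(t,t)$-property forces a contradiction once the number of gaps is large. After a harmless compactness reduction we may assume all members of $\mathcal H$ lie in $[0,1]$ and every component is a closed interval. Fix an integer $k$ and assume, for contradiction, that $\mathcal H$ cannot be pierced by $k$ points. A configuration $0\le y_1\le\cdots\le y_k\le 1$ is parametrized by the gap vector $(y_1,y_2-y_1,\ldots,1-y_k)\in\Delta^{k}$. These points cut $[0,1]$ into $k+1$ gaps $g_0,\ldots,g_k$. For every $H\in\mathcal H$ missed by all $y_i$, each component of $H$ lies inside a single gap.

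The first step is to set up a Komiya-type covering of $\Delta^k$. I would use the version of Komiya's theorem in which each face $\sigma$ of $\Delta^k$ is assigned a polytope $P_\sigma\subseteq\sigma$; for every point $x\in\sigma$ we must name a face of $\sigma$ whose polytope contains $x$. To each $y\in\Delta^k$ we attach the set of gaps containing a component of some member missed by $y$, and choose the polytopes and labelling as in Kaiser's and Zerbib's arguments. The conclusion of Komiya's theorem is then a single configuration $y^\ast$ together with members $H_0,\ldots,H_k$ of $\mathcal H$ such that:
\begin{itemize}
\item every $H_j$ avoids all points of $y^\ast$;
\item every gap $g_j$ of $y^\ast$ is charged to $H_j$, in the sense that a component of $H_j$ lies in $g_j$;
\item the charges come with nonnegative weights summing to the gap lengths.
\end{itemize}
Degenerate gaps of length $0$ must be handled, and I expect the additive term $+s$ in the statement to absorb the loss here: a member with all of its at most $s$ components in degenerate gaps is useless, so effectively only $k+1-s$ gaps are reliably charged.

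The second step is the counting, and it uses only that each $H_j$ meets at most $s$ of the $k+1$ gaps. Since all $H_j$ avoid $y^\ast$, any $t$ of them have a common point only if they share a common gap. Choose $t$ indices $j_1,\ldots,j_t$ independently and uniformly from $\{0,\ldots,k\}$. For a fixed gap $g$, the probability that all chosen $H_{j_i}$ meet $g$ is $(c_g/(k+1))^t$, where $c_g$ is the number of $H_j$ meeting $g$. Since $\sum_g c_g\le s(k+1)$ and the function $c\mapsto c^t$ is convex with the $c_g$ also constrained, I would bound
\[
\sum_g\Bigl(\frac{c_g}{k+1}\Bigr)^t\le \frac{(k+1)\,s^t\cdot t}{(k+1)^t},
\]
possibly using the Komiya weights rather than uniform selection to obtain the factor $t$ cleanly. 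This quantity is $<1$ exactly when $k+1>t^{1/(t-1)}s^{t/(t-1)}$. Hence some $t$ members, distinct after also discarding coincident choices, have no common gap and therefore empty intersection, which contradicts the $(t,t)$-property. Together with the $s$ correction from the first step, this gives $\tau(\mathcal H)<t^{1/(t-1)}s^{t/(t-1)}+s$.

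I expect the main obstacle to be the first step: choosing the Komiya polytopes and labels so that the output simultaneously gives one member per gap and the uniform degree bound needed in the counting, while handling degenerate gaps and repeated members. That is where the precise constant $t^{1/(t-1)}$ and the $+s$ are decided. My first attempt would be to copy Kaiser's labelling verbatim, namely labelling $y$ by gaps that contain an entire component of some unpierced member, and then check that repetitions among the $H_j$ cost at most the $+s$ slack.
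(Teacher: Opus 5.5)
The paper does not prove this statement; it quotes it from Zerbib's paper, so your proposal has to stand on its own. As written it has a genuine gap.

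You never specify the covering or the labelling, and defer to ``Kaiser's and Zerbib's arguments''. The output you posit, one member $H_j$ with a component in each gap $g_j$, is too weak for your second step. It gives $\sum_g c_g\le s(k+1)$ but no bound on any individual $c_g$. All the $H_j$ may also meet a common gap $g_0$; then $c_{g_0}=k+1$ and $\sum_g (c_g/(k+1))^t\ge 1$. Under the sole constraint $\sum_g c_g\le s(k+1)$, convexity drives $\sum_g c_g^t$ toward exactly such concentrated configurations. So your displayed inequality, with its factor $t$, is not derived, and neither is the ``$+s$'' you attribute to degenerate gaps. In fact nothing is lost at degenerate gaps: a member avoiding the points cannot have a component in an empty gap.

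The missing ingredient is a covering theorem whose output has the same weighted degree at every gap. This is Shapley's KKMS theorem, i.e.\ Komiya's theorem with barycenters. The argument runs as follows.
\begin{itemize}
\item For $\emptyset\ne S\subseteq\{0,\dots,k\}$, let $O_S$ be the set of gap vectors $x$ for which some $H\in\mathcal H$ avoids all points of $y(x)$ and has components in exactly the gaps indexed by $S$. Thus $O_S=\emptyset$ when $|S|>s$.
\item Members are compact, so avoidance is an open condition and the gap set is locally constant; hence $O_S$ is open. If $\tau(\mathcal H)>k$, every $x$ in a face $\Delta^T$ (all gaps outside $T$ of length $0$) lies in some $O_S$ with $S\subseteq T$.
\item Work with open sets; the open KKMS theorem follows from the closed one by a Lebesgue-number shrinking. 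If you pass to closures instead, limiting witnesses may touch the points $y_i$, and two members in different closed gaps can then meet at such a point.
\item KKMS yields $x^*$ and a balanced collection $\mathcal B$ with $x^*\in\bigcap_{S\in\mathcal B}O_S$. That is, there are weights $\lambda_S>0$ with $\sum_{S\ni g}\lambda_S=1$ for every gap $g$. In particular every gap at $x^*$ is nondegenerate, so no $+s$ correction is needed.
\item Since $\sum_S\lambda_S|S|=k+1$ and $|S|\le s$, the total weight $\Lambda=\sum_S\lambda_S$ satisfies $\Lambda\ge(k+1)/s$.
\item Draw $t$ witnesses independently, choosing $S$ with probability $\lambda_S/\Lambda$. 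A fixed gap lies in all $t$ gap sets with probability $\Lambda^{-t}$. So a common gap exists with probability at most $(k+1)\Lambda^{-t}\le s^t/(k+1)^{t-1}$.
\item Witnesses that avoid $y(x^*)$ and share no gap have no common point. Repeated draws are harmless for the $(t,t)$-property, since at most $t$ members with empty intersection extend to $t$ distinct such members.
\end{itemize}
Hence $\tau(\mathcal H)>k$ forces $(k+1)^{t-1}\le s^t$. Taking $k=\tau(\mathcal H)-1$ gives $\tau(\mathcal H)\le s^{t/(t-1)}$, which already implies the stated bound $t^{1/(t-1)}s^{t/(t-1)}+s$ with room to spare.
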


The near-diagonal upper bound is obtained by inserting this diagonal estimate into the combinatorial dichotomy. Taken together, the one-dimensional argument uses all three inputs emphasized above: the lexicographic certificate count gives the general bound, the dichotomy reduces the near-diagonal problem to a diagonal one, and Zerbib's theorem closes the argument.

\begin{theorem}\label{thm:one-dimensional-near-diagonal}
Let $p\ge q\ge2$ and $s\ge2$, put $t=\lceil q/(p-q+1)\rceil$. If $t\ge2$, then
\[
p-q+s\le \HD_1^{(s)}(p,q)\le p-q+1+t^{1/(t-1)}s^{t/(t-1)}+s.
\]
Consequently, if $q\ge(p-q+1)\lceil\log_2s\rceil+1$, then $p-q+s\le \HD_1^{(s)}(p,q)\le p-q+1+5s$.
\end{theorem}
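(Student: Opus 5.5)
For the lower bound I will not build anything new. Theorem~\ref{thm:large-q-lower} with $d=1$ gives $\HD_1^{(s)}(p,q)\ge p-q+1\cdot(s-1)+1=p-q+s$ for all $p\ge q\ge 2$. So the whole task is the upper bound, which I plan to get from a greedy form of the dichotomy in Lemma~\ref{lem:dichotomy} together with Zerbib's Theorem~\ref{thm:zerbib-tt}.

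\textbf{Greedy removal.} Let $\F$ satisfy the $(p,q)$-property. Call a $t$-subfamily \emph{bad} if its $t$ members have empty intersection. Repeatedly remove pairwise disjoint bad $t$-subfamilies $S_1,\dots,S_j$ as long as this is possible and $jt\le p$ still holds. The key counting observation is this. Let $W$ be a set of $p$ members of $\F$ containing $S_1\cup\dots\cup S_j$. Any intersecting subfamily of $W$ misses at least one member of each $S_i$, so it has at most $p-j$ members. The $(p,q)$-property then forces $j\le p-q$.

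\textbf{Stopping and piercing.} Suppose the process stops because no bad $t$-set remains in $\F\setminus\bigcup S_i$. Then that remainder has the $(t,t)$-property, and Theorem~\ref{thm:zerbib-tt} pierces it with fewer than $t^{1/(t-1)}s^{t/(t-1)}+s$ points. The removed part $U=\bigcup S_i$ has at most $p$ members. The last paragraph of the proof of Proposition~\ref{prop:bootstrap} shows that any subfamily of size at most $p$ of a $(p,q)$-family can be pierced by $p-q+1$ points. Adding the two bounds gives the claimed upper bound.

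\textbf{Main obstacle: the size cap.} The process might instead stop because one more bad set would push the union above $p$, i.e.\ $(j+1)t>p$ with $j\le p-q$. Since $t=\lceil q/(p-q+1)\rceil$, we have $q\le t(p-q+1)\le p+1$. So the only problematic case is $j=p-q$ with $t(p-q+1)=p+1$. In that case I would take a further bad set $S_{j+1}$, drop one of its members $x$, and use $W=S_1\cup\dots\cup S_j\cup(S_{j+1}\setminus\{x\})$, which has exactly $p$ members. An intersecting subfamily of $W$ misses one member from each of $S_1,\dots,S_j$, so it has at most $p-j=q$ members. If it has exactly $q$ members, it contains all of $S_{j+1}\setminus\{x\}$. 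The step I still need to secure is a contradiction here; for instance, by choosing the dropped member $x$ suitably, or by noting that $S_{j+1}$ may be chosen minimal non-intersecting.

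If this cannot be arranged, the fallback is to pierce $U$ directly. A $(p,q)$-family on at most $p+1$ members needs at most $p-q+2$ points. I would then absorb the extra point into the slack in Zerbib's strict inequality, using integrality, since $\tau$ is an integer strictly less than a possibly non-integer bound.

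\textbf{The consequence.} Assume $q\ge(p-q+1)\lceil\log_2 s\rceil+1$. Then $t\ge \lceil\log_2 s\rceil+1$, so $t-1\ge\log_2 s$. Hence $s^{1/(t-1)}\le 2$ and $t^{1/(t-1)}\le 2$. This gives $t^{1/(t-1)}s^{t/(t-1)}\le 4s$, and the total bound becomes $p-q+1+5s$. The same hypothesis also gives $t\ge2$, since $s\ge2$.
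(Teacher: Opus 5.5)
Your upper-bound argument is correct and takes a different route from the paper. However, the ``main obstacle'' you isolate does not exist; it comes from an arithmetic slip. Since $\lceil q/(p-q+1)\rceil=\lfloor (q+p-q)/(p-q+1)\rfloor=\lfloor p/(p-q+1)\rfloor$, you have $t(p-q+1)\le p$, not merely $\le p+1$.

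Hence $(j+1)t\le p$ whenever $j\le p-q$, so the size cap never binds. The greedy process can only stop because $\F\setminus U$ contains no bad $t$-set. Then $|\F\setminus U|\ge p-(p-q)t\ge t$, so the $(t,t)$-property of the remainder is not vacuous. The paper needs the same inequality, in the form $u=p-(p-q)t\ge t$.

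Drop the fallback paragraph entirely, because it would not have worked. From $\tau<X$, integrality only gives $\tau\le\lceil X\rceil-1$. This absorbs an extra point when $X$ is an integer, but not when it is non-integer, which is the opposite of what you claimed.

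The paper instead runs a chain of $p-q$ applications of Lemma~\ref{lem:dichotomy} along the pairs $(p-it,\,q-i(t-1))$. At the first failure it obtains a single exceptional subfamily of size $p-(i+1)t\le p$ whose complement has the $(t,t)$-property. If no failure occurs, the whole family has the $(u,u)$-property with $u\ge t$.

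You replace this iteration by one maximal packing of pairwise disjoint non-intersecting $t$-subfamilies, whose number is bounded by $p-q$ via a single count inside a $p$-set. Both routes then use the same two ingredients and give identical bounds: the $p-q+1$ bound for a subfamily of size at most $p$, and Theorem~\ref{thm:zerbib-tt} for the remainder. Your version is one-shot and self-contained. It also makes it transparent that $t$ is exactly the largest size for which $p-q+1$ disjoint bad $t$-sets would fit into a $p$-subfamily. The paper's phrasing through Lemma~\ref{lem:dichotomy} fits the machinery it reuses in Proposition~\ref{prop:bootstrap} and in the appendix recurrence.

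A minor point on the lower bound: in the paper, the case $d=1$ of Theorem~\ref{thm:large-q-lower} is proved by referring back to the construction in Section~\ref{subsec:lb}. That construction is the lower bound of the very statement you are proving, so your citation is circular as written. It is easily repaired: the $d\ge2$ argument there (Lemma~\ref{lem:small-holes} plus compactness) works verbatim for $d=1$, because at most $s-1$ points cut a segment into at most $s$ intervals. Equivalently, take complements in $[0,1]$ of open holes of total length $<1/q$ covering every $(s-1)$-tuple, pass to a finite subcover, and add $p-q$ disjoint intervals.
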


\begin{proof}[Proof of the upper bounds in Theorem~\ref{thm:one-dimensional-near-diagonal}]
Put $r=p-q$ and $t=\lceil q/(r+1)\rceil$. For $i=0,\ldots,r$, define $p_i=p-it$ and $q_i=q-i(t-1)$. Then $p_i-q_i=r-i$, while $p_r=q_r=:u$ and $u\ge t$.

Starting from the $(p_0,q_0)$-property, suppose that for some $i<r$ the family satisfies the $(p_i,q_i)$-property. If it also satisfies the $(p_{i+1},q_{i+1})$-property, continue. Otherwise, Lemma~\ref{lem:dichotomy} gives a subfamily $S$ of size $p_{i+1}$ such that the remaining family satisfies the $(t,t)$-property, since $p_i-p_{i+1}=t$ and $q_i-q_{i+1}+1=t$. To pierce $S$, if $|S|\le r$, pierce its members separately. Otherwise, extend $S$ to a subfamily of $p$ members. Among the guaranteed $q$ members with a common point, at least $|S|-(p-q)=|S|-r$ belong to $S$; piercing these by one point and the remaining at most $r$ members of $S$ separately uses at most $r+1=p-q+1$ points. Theorem~\ref{thm:zerbib-tt} then pierces the remaining family by fewer than $t^{1/(t-1)}s^{t/(t-1)}+s$ points.

If all $r$ reductions succeed, the original family satisfies the $(u,u)$-property. Since $u\ge t$, Theorem~\ref{thm:zerbib-tt} again gives at most $t^{1/(t-1)}s^{t/(t-1)}+s$ points. This proves the upper bound.

Finally, if $q\ge(r+1)\lceil\log_2s\rceil+1$, then $t-1\ge\lceil\log_2s\rceil$. Hence $t^{1/(t-1)}\le2$ and $s^{1/(t-1)}\le2$, so the term from Theorem \ref{thm:zerbib-tt} is at most $5s$.

We postpone the proof of the lower bound to Section \ref{subsec:lb}.
\end{proof}

\paragraph{The polylogarithmic range.}
The upper bound in Theorem~\ref{thm:one-dimensional-polylog} is obtained by iterating the dichotomy until it yields an $O(s)$ additive estimate and then applying Zerbib's diagonal bound at a sequence of scales to sharpen the additive term to $2s+1$. Since the iteration and its parameter bookkeeping are more technical than the other arguments in the paper, the proof is deferred to Appendix~\ref{app:one-dimensional-bootstrapping}.

\subsection{A matching lower bound}\label{subsec:lb}

\begin{proof}[Proof of the lower bound in Theorems~\ref{thm:one-dimensional-near-diagonal} and~\ref{thm:one-dimensional-polylog}]
The idea is to take complements of very small ``holes'' in $[0,1]$. We choose the holes so that any $q$ of them are too small to cover $[0,1]$, while every set of at most $s-1$ points is contained in one of them.

For every $\mathbf{x}=(x_1,\ldots,x_{s-1})\in[0,1]^{s-1}$, choose a relatively open set $U_{\mathbf{x}}\subseteq[0,1]$ that contains all the points $x_1,\ldots,x_{s-1}$, is a union of at most $s-1$ intervals, and has total length less than $1/q$. The Cartesian powers $U_{\mathbf{x}}^{\,s-1}$ form an open cover of the compact space $[0,1]^{s-1}$. Hence there is a finite subcover $U_1^{\,s-1},\ldots,U_N^{\,s-1}$. Adding further holes of the same type if necessary, we may assume that $N\ge q$. Put $G_i=[0,1]\setminus U_i$ and $\mathcal G=\{G_1,\ldots,G_N\}$.

Each $G_i$ is a union of at most $s$ compact intervals. Any $q$ members of $\mathcal G$ intersect: the corresponding $q$ holes have total length less than $1$, and therefore cannot cover $[0,1]$. On the other hand, any set of at most $s-1$ points is contained in some $U_i$, after repeating points to obtain an $(s-1)$-tuple. Those points consequently miss $G_i$. Thus every $q$ members of $\mathcal G$ intersect, but $\mathcal G$ cannot be pierced by $s-1$ points, so $\tau(\mathcal G)\ge s$.

Finally, add $p-q$ pairwise disjoint compact intervals outside $[0,1]$, all disjoint from every member of $\mathcal G$, and let $\mathcal F$ be the resulting family. Among any $p$ members of $\mathcal F$, at most $p-q$ are added intervals, so at least $q$ belong to $\mathcal G$ and therefore have a common point. Hence $\mathcal F$ satisfies the $(p,q)$-property. Any transversal needs a separate point for each of the $p-q$ added intervals and at least $s$ further points to pierce $\mathcal G$. Therefore $\tau(\mathcal F)\ge p-q+s$.

Embedding the same construction in an affine line in $\R^d$ gives the identical lower bound in every higher dimension in the usual range $q\ge d+1$.
\end{proof}

For $q=2$, the classical upper and lower bounds leave a factor of order $(\log s)^2$. In contrast, Theorem~\ref{thm:one-dimensional-polylog} gives, to the best of our knowledge, the first broad range with matching upper and lower dependence on $s$ for $q>2$: whenever $q\ge C_0s\log(2s)\log(ep)$, the additive term lies between $s$ and $2s+1$. Theorem~\ref{thm:one-dimensional-near-diagonal} gives a parameter-sensitive estimate when $q$ is especially close to $p$.

\begin{corollary}[A fixed number of components]\label{cor:one-dimensional-fixed-s}
For every fixed $s\ge2$, there is a constant $C_s>0$ such that $q\ge C_s\log(ep)$ implies
\[
p-q+s\le \HD_1^{(s)}(p,q)\le p-q+2s+1.
\]
\end{corollary}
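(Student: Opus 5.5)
The plan is to derive the corollary directly from Theorem~\ref{thm:one-dimensional-polylog}. We split according to how large $q$ is relative to $\log(ep)$, and handle the small-$p$ case separately.

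The lower bound $p-q+s\le \HD_1^{(s)}(p,q)$ needs no work. It holds for every $p\ge q\ge 2$ by the hole construction in Section~\ref{subsec:lb}, which uses nothing beyond $p\ge q$.

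For the upper bound, fix $s\ge2$ and put $C_s=C_0s\log(2s)$, where $C_0$ is the absolute constant of Theorem~\ref{thm:one-dimensional-polylog}. If $q\ge C_s\log(ep)$, then the hypothesis $q\ge C_0s\log(2s)\log(ep)$ of that theorem holds verbatim. The remaining conditions also hold: $s\ge2$ by assumption, and $q\ge2$ as soon as $C_s\log(ep)\ge 2$, which is automatic since $\log(ep)\ge1$ and $C_0s\log(2s)$ can be assumed at least $2$ (enlarge $C_0$ if necessary). Hence $\HD_1^{(s)}(p,q)\le p-q+2s+1$.

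The only thing to check is that the constant can be made to depend on $s$ alone, with no lower threshold on $p$. This is immediate because Theorem~\ref{thm:one-dimensional-polylog} has no condition $p\ge p_0$. So there is essentially no obstacle; the main care point is verifying that the absolute constant absorbs the requirement $q\ge2$. If one wants the corollary to be meaningful when $q$ slightly exceeds the threshold, one just takes $C_s$ to be $\max\{C_0s\log(2s),2\}$.
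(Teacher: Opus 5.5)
Your proposal is correct and is essentially the paper's own proof. For fixed $s$ you absorb the factor $s\log(2s)$ from Theorem~\ref{thm:one-dimensional-polylog} into $C_s$, and the lower bound comes from the hole construction of Section~\ref{subsec:lb}.
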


\begin{proof}
For fixed $s$, the factor $s\log(2s)$ in Theorem~\ref{thm:one-dimensional-polylog} is a constant.
\end{proof}

\subsection{Two-value concentration for fixed \texorpdfstring{$s$}{s}}
\label{subsec:one-dimensional-concentration}

We now fix $s\ge2$ and write $H_s(p,q)=\HD_1^{(s)}(p,q)$. The first observation is that the diagonal Hadwiger--Debrunner numbers eventually stabilize.

\begin{lemma}[Diagonal stabilization]
\label{lem:diagonal-stabilization}
There exist an integer $T_s\ge2$ and an integer $\kappa_s\in\{s,\ldots,2s\}$ such that $H_s(t,t)=\kappa_s$ for every $t\ge T_s$.
\end{lemma}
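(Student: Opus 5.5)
The plan is to prove two facts: the sequence $t\mapsto H_s(t,t)$ is nonincreasing, and it is eventually confined to $\{s,\ldots,2s\}$. A nonincreasing integer sequence that is bounded below is eventually constant. Its eventual value is then some $\kappa_s$, and the confinement places $\kappa_s$ in $\{s,\ldots,2s\}$.

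\emph{Monotonicity.} Take $t\ge2$ and a family $\F$ with the $(t+1,t+1)$-property, meaning every $t+1$ members intersect. If $|\F|\ge t+1$, then every $t$ members also intersect, because any $t$ members extend to $t+1$ members of $\F$. So $\F$ has the $(t,t)$-property, and $\tau(\F)\le H_s(t,t)$. Hence $H_s(t+1,t+1)\le H_s(t,t)$. The convention that a $(p,q)$-family has at least $p$ members removes the degenerate small-family case; I will check the paper's convention on this point here.

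\emph{Lower bound.} The construction of Section~\ref{subsec:lb}, with $p=q=t$, gives a family with the $(t,t)$-property and transversal number at least $s$. So $H_s(t,t)\ge s$ for all $t\ge2$.

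\emph{Upper bound.} Theorem~\ref{thm:zerbib-tt} gives $H_s(t,t)<t^{1/(t-1)}s^{t/(t-1)}+s$. As $t\to\infty$ we have $t^{1/(t-1)}\to1$ and $s^{1/(t-1)}\to1$, so the first term tends to $s$ and the right-hand side tends to $2s$. We need a threshold $T_s$ with $t^{1/(t-1)}s^{t/(t-1)}\le s+1$ for $t\ge T_s$. The strict inequality then gives $H_s(t,t)<2s+1$, i.e.\ $H_s(t,t)\le 2s$. This threshold exists because the left side converges to $s$, which is strictly less than $s+1$; one can also make it explicit, e.g.\ $(ts)^{1/(t-1)}\le 1+1/s$ holds once $t\gtrsim s\log(ts)$.

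\emph{Conclusion.} For $t\ge T_s$, the values $H_s(t,t)$ form a nonincreasing integer sequence in $\{s,\ldots,2s\}$. Such a sequence can strictly decrease at most $s$ times, so it stabilizes at some value $\kappa_s$. Enlarging $T_s$ to the point where stabilization occurs gives the lemma.

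The only delicate step is the monotonicity, specifically the size convention. If families with fewer than $t$ members were allowed, they would have to be handled separately; but such a family can be pierced by at most $t-1$ points, so this case is harmless in any event. Everything else is routine.
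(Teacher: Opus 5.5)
Your proof is correct and follows the paper's argument essentially step for step: monotonicity of $H_s(t,t)$, the lower bound $s$ from the hole construction with $p=q=t$, Zerbib's bound tending to $2s$ so that eventually $H_s(t,t)\le 2s$, and stabilization of a nonincreasing integer sequence confined to $\{s,\ldots,2s\}$. One small correction to your closing aside: the convention that a $(p,q)$-family has at least $p$ members, which the paper does adopt, cannot be dropped harmlessly, since otherwise $t-1$ pairwise disjoint intervals would vacuously have the $(t,t)$-property and force $H_s(t,t)\ge t-1$, so no finite $\kappa_s$ could exist.
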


\begin{proof}
The sequence $H_s(t,t)$ is nonincreasing. Indeed, a family with the $(t+1,t+1)$-property also has the $(t,t)$-property.
The lower bound in Theorem~\ref{thm:one-dimensional-near-diagonal}, applied with $p=q=t$, gives $H_s(t,t)\ge s$. On the other hand, Theorem~\ref{thm:zerbib-tt} gives
$H_s(t,t)<t^{1/(t-1)}s^{t/(t-1)}+s$. The right-hand side tends to $2s$, so for all sufficiently large $t$ it is smaller than $2s+1$. Since $H_s(t,t)$ is an integer, it is then at most $2s$. Thus $H_s(t,t)$ is a nonincreasing sequence of integers in $\{s,\ldots,2s\}$ for all sufficiently large $t$, and hence it eventually stabilizes.
\end{proof}

The next lemma is a cleanup step. Once a constant-additive estimate is available in the logarithmic range, one application lowers its additive constant by one, until the diagonal obstruction is reached.

\begin{lemma}[One cleanup step]
\label{lem:concentration-cleanup}
Fix $s$, and let $T_s$ and $\kappa_s$ be as in Lemma~\ref{lem:diagonal-stabilization}. Suppose that $K\ge\kappa_s+2$ is an integer and that there are constants $A>0$ and $p_0$ such that
$H_s(p,q)\le p-q+K$ whenever $p\ge p_0$ and $q\ge A\log(ep)$. Then there are constants $A'>0$ and $p_0'$ such that
$H_s(p,q)\le p-q+K-1$ whenever $p\ge p_0'$ and $q\ge A'\log(ep)$.
\end{lemma}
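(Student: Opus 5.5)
The plan is to apply the dichotomy of Lemma~\ref{lem:dichotomy} once, with a step size equal to the fixed stabilization threshold $T_s$. In one branch the inductive hypothesis applies to a pair whose gap is smaller by one. In the other branch, Lemma~\ref{lem:diagonal-stabilization} bounds the cost by $\kappa_s$, plus the usual $p-q+1$ for an exceptional subfamily. The slack $K\ge\kappa_s+2$ is exactly what makes the second branch fit.

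\textbf{Case $p=q$.} Here we only need $p\ge T_s$. Then $H_s(p,p)=\kappa_s\le K-2<K-1$ by Lemma~\ref{lem:diagonal-stabilization}.

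\textbf{Case $p>q$.} Put $t=T_s$, $p'=p-t$ and $q'=q-t+1$. Then $p-p'=t$ and $q-q'+1=t$, and since $t\ge2$ we have $p'<p$ and $q'<q$. Let $\F$ be a family of $s$-intervals with the $(p,q)$-property. Lemma~\ref{lem:dichotomy} gives two alternatives.
\begin{itemize}
\item[(a)] $\F$ satisfies the $(p',q')$-property. Note that $p'-q'=p-q-1\ge0$. We want to apply the hypothesis to $(p',q')$, which gives $\tau(\F)\le p'-q'+K=p-q+K-1$. For this we need $p'\ge p_0$ and $q'\ge A\log(ep')$. It suffices that $p\ge p_0+T_s$ and $q-T_s+1\ge A\log(ep)$. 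The latter holds if $q\ge A'\log(ep)$ with, say, $A'=A+T_s$, because $\log(ep)\ge1$.
\item[(b)] There is $S\subseteq\F$ with $|S|=p'$ such that $\F\setminus S$ has the $(t,t)$-property.
\begin{itemize}
\item[$\circ$] If $\F\setminus S$ has at least $t$ members, it is pierced by $H_s(t,t)=\kappa_s$ points. (If it has fewer members, we adjust trivially, or note that the $(t,t)$-property is vacuous there and pierce those fewer than $T_s$ sets individually; I expect this boundary case to need a line of care.)
\item[$\circ$] $S$ has at most $p$ members, so it is pierced by $p-q+1$ points exactly as in the proof of Proposition~\ref{prop:bootstrap}: either pierce its members separately, or extend it to $p$ members and use one point for the guaranteed common intersection.
\end{itemize}
The total is $p-q+1+\kappa_s\le p-q+K-1$, because $K\ge\kappa_s+2$.
\end{itemize}

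\textbf{The small-family subtlety.} The main point requiring attention is the case where $\F\setminus S$ has fewer than $t$ members. One fix is to take $p_0'$ large and observe that we can always pierce fewer than $T_s$ sets with $T_s$ points, but this costs $T_s$ rather than $\kappa_s$. The cleaner fix is to note that the $(t,t)$-property is only meaningful for families of size at least $t$. If $|\F\setminus S|<t$, then $|\F|<p$, and the definition of $\HD$ over families with at least $p$ members lets us enlarge $S$ instead. Concretely, we replace $S$ by a $p$-subset containing all of $\F$ and pierce it with $p-q+1$ points. Either way, the constants $A'=A+T_s$ and $p_0'=\max(p_0+T_s,T_s)$ suffice, which proves the lemma.
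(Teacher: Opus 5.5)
Your proof is correct and follows the paper's argument essentially verbatim. You use the same dichotomy step with $p'=p-T_s$ and $q'=q-T_s+1$, apply the hypothesis in the first branch, and in the second branch pay $\kappa_s$ for $\F\setminus S$ plus $p-q+1$ for $S$ by extending it to a $p$-subfamily. Your ``small-family subtlety'' is vacuous rather than something that needs fixing: the $(p,q)$-property is defined only for families with at least $p$ members, so $|\F\setminus S|\ge p-p'=T_s$ automatically. Your proposed fix of replacing $S$ by a $p$-subset containing all of $\F$ would not make sense when $|\F|<p$ anyway, so you can simply delete that paragraph.
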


\begin{proof}
Choose $A'$ and $p_0'$ sufficiently large that, whenever $p\ge p_0'$ and $q\ge A'\log(ep)$, one has $q\ge T_s$, $p-T_s\ge p_0$, and
$q-T_s+1\ge A\log(e(p-T_s))$. This is possible because $T_s$ is fixed.

Let $\mathcal F$ satisfy the $(p,q)$-property. If $p=q$, then $q\ge T_s$ and $\tau(\mathcal F)\le\kappa_s\le K-1$. We may therefore assume that $p>q$. Put
$p'=p-T_s$ and $q'=q-T_s+1$, so that $p'-q'=p-q-1$.

Apply Lemma~\ref{lem:dichotomy} to the pair $(p',q')$. If $\mathcal F$ satisfies the $(p',q')$-property, then the assumed bound gives
$\tau(\mathcal F)\le p'-q'+K=p-q+K-1$.

Otherwise, there is a subfamily $\mathcal S\subseteq\mathcal F$ of size $p'$ with no intersecting $q'$-tuple, and $\mathcal F\setminus\mathcal S$ satisfies the $(T_s,T_s)$-property. Hence $\tau(\mathcal F\setminus\mathcal S)\le\kappa_s$.

It remains to pierce $\mathcal S$. Choose any $T_s$ members of $\mathcal F\setminus\mathcal S$. Together with $\mathcal S$ they form a $p$-subfamily, so some $q$ of these sets intersect. At most $T_s$ of them lie outside $\mathcal S$, and therefore at least $q-T_s=q'-1$ intersecting members lie in $\mathcal S$. Piercing these by one point and every remaining member of $\mathcal S$ separately gives
$\tau(\mathcal S)\le 1+p'-(q'-1)=p-q+1$.
Consequently,
$\tau(\mathcal F)\le p-q+\kappa_s+1\le p-q+K-1$.
\end{proof}

\begin{proof}[Proof of Theorem~\ref{thm:one-dimensional-concentration}]
Corollary~\ref{cor:one-dimensional-fixed-s} gives constants $A_s,B_s>0$ such that
$H_s(p,q)\le p-q+B_s$ whenever $q\ge A_s\log(ep)$. Choose an integer $K_0\ge\max\{B_s,\kappa_s+1\}$. If $K_0>\kappa_s+1$, apply Lemma~\ref{lem:concentration-cleanup} repeatedly. The number of applications is finite and depends only on $s$. After enlarging the logarithmic constant and the lower threshold for $p$ at each step, we obtain
$H_s(p,q)\le p-q+\kappa_s+1$
for all sufficiently large $p$ and all $q\ge C_s\log(ep)$.

For the matching lower bound, assume also that $q\ge T_s$. By the definition of $\kappa_s$, there is a family $\mathcal G$ of $s$-intervals with the $(q,q)$-property and $\tau(\mathcal G)=\kappa_s$. Place $\mathcal G$ in one bounded interval and add $p-q$ compact intervals that are pairwise disjoint and disjoint from every member of $\mathcal G$. Every $p$ members of the resulting family contain at least $q$ members of $\mathcal G$, and these intersect. Thus the new family has the $(p,q)$-property. Its piercing number is $p-q+\kappa_s$, because each added interval requires its own point and $\mathcal G$ requires $\kappa_s$ further points.

Finally, enlarge $p_s$ so that
$C_s\log(e p_s)\ge T_s$. Then, whenever $p\ge p_s$ and
$q\ge C_s\log(ep)$, we automatically have $q\ge T_s$.
Thus the lower-bound construction applies throughout the stated range. We conclude that
$H_s(p,q)\in\{p-q+\kappa_s,p-q+\kappa_s+1\}$ throughout the stated range.
\end{proof}

\begin{remark}
The argument determines only $s\le\kappa_s\le2s$. The exact value of $\kappa_s$ is not known to us. In particular, it is natural to ask whether $\kappa_s=s$ for every $s$.
\end{remark}

\section{Lower bounds in dimensions \texorpdfstring{$d\ge2$}{d >= 2}}\label{sec:lower}

\subsection{A lower bound on $\HD_d(p,d+1)$ in the convex case}

We begin by improving the previously known lower bound on the Hadwiger--Debrunner numbers already for ordinary convex sets. The extension from convex sets to unions will be carried out in the next subsection.

A finite point set in $\R^d$ is in general position if no $d+1$ of its points lie on a hyperplane. For fixed integers $d,h\ge3$, let $\alpha_{d,h}(N)$ be the largest integer such that every set of $N$ points in $\R^d$ with no $d+h$ points on a hyperplane contains a subset of $\alpha_{d,h}(N)$ points in general position. Suk and Zeng~\cite[Theorem~1.2]{SukZeng26} proved the following.

\begin{theorem}[Suk--Zeng]\label{thm:suk-zeng-general-position}
Let $d,h\ge3$ be fixed integers. If $d$ is odd and $dh+2>2d+2h$, then $\alpha_{d,h}(N)\le N^{1/2+o(1)}$. If $d$ is even and $dh+2>2d+3h$, then $\alpha_{d,h}(N)\le N^{1/2+o(1)}$.
\end{theorem}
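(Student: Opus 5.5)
This is the theorem of Suk and Zeng~\cite[Theorem~1.2]{SukZeng26}, which we quote and use as a black box. If I had to reprove it, I would follow the Balogh--Solymosi strategy of~\cite{BS18}, lifted to higher dimensions: combine the hypergraph container method with a random sparsification of a grid.

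\emph{Setup.} Start from the integer grid $G=[n]^d$. Form the $(d+1)$-uniform hypergraph $\mathcal H$ whose hyperedges are the $(d+1)$-tuples of grid points lying on a common hyperplane. A point subset is in general position exactly when it is independent in $\mathcal H$.

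\emph{Containers.} Grid hyperplanes are well understood: one can count the hyperplanes spanned by grid points and the number of grid points on each. From these counts, bound the $j$-codegrees of $\mathcal H$ for every $j\le d$. Feeding these bounds into the hypergraph container lemma, applied iteratively, gives a family $\mathcal C$ of containers with two properties. Every independent set of $\mathcal H$ lies in some $C\in\mathcal C$, each container satisfies $|C|\le n^{d-\theta}$ for some $\theta>0$, and $\log|\mathcal C|$ is small (roughly $n^{d/2+o(1)}$ up to the appropriate scaling).

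\emph{Random sparsification.} Keep each grid point independently with probability $\pi$, and let $P$ be the resulting set, so $|P|\approx\pi n^d=:N$. There are two requirements.
\begin{itemize}
\item The set $P$ must have no $d+h$ points on a hyperplane. To arrange this, estimate the expected number of $(d+h)$-subsets of $P$ lying on a grid hyperplane, which is at most $\sum_H |H\cap G|^{d+h}\pi^{d+h}$, and delete one point from each such subset. This deletion must remove only $o(N)$ points.
\item For $m=N^{1/2+o(1)}$, a union bound over containers should show that with high probability no container contains $m$ points of $P$. The failure probability for a single container is about $\binom{|C|}{m}\pi^m$, and this must beat $|\mathcal C|$.
\end{itemize}

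\emph{Main obstacle.} The hard step is balancing the two requirements at a common choice of $\pi$. Rich hyperplanes of the grid contain about $n^{d-1}$ points, and there are many of them. Making the deletion step cost only $o(N)$ therefore forces $\pi$ to be small as a function of $h$. On the other hand, the container step needs $\pi$ large enough that the $|\mathcal C|$ factor is absorbed. The exponent bookkeeping here is where the numerical conditions should appear: $dh+2>2d+2h$ for odd $d$, and $dh+2>2d+3h$ for even $d$.

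The parity distinction most likely comes from the distribution of hyperplane richness in the grid. For even $d$, the count of lower-dimensional rich flats differs, which changes how the codegrees behave. To control this I would try first a refined count of $k$-flats with many grid points, and possibly replace $[n]^d$ with a slightly perturbed or algebraic variant so that the rich hyperplanes are tamer.
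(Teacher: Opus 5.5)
Your proposal matches the paper: the paper does not prove this statement either, but quotes it from \cite[Theorem~1.2]{SukZeng26} as a black box and uses it only through Proposition~\ref{prop:suk-zeng}, with $h=2d$. The reconstruction you sketch is not needed, and it would fail as written. The $n$ hyperplanes $x_1=c$ cover $[n]^d$, so the no-$(d+h)$-on-a-hyperplane condition forces $N\le(d+h-1)n$, i.e.\ $\pi=O(n^{-(d-1)})$. At such densities the random set spans only $O(N\log n)$ cohyperplanar $(d+1)$-tuples, and so it still contains general-position subsets of size $\Omega(N/(\log n)^{1/d})$. A base configuration with much sparser hyperplanes would therefore be needed, for instance $(d-1)$-flats in a higher-dimensional grid followed by a generic projection to $\R^d$.
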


We use the following consequence of Theorem~\ref{thm:suk-zeng-general-position}.

\begin{proposition}\label{prop:suk-zeng}
For every fixed $d\ge3$ and every $\eta>0$, there are arbitrarily large $N$ and an $N$-point set $P\subset\R^d$ such that no $3d$ points of $P$ lie on a hyperplane, while every subset of at least $N^{1/2+\eta}$ points contains $d+1$ points on a hyperplane.
\end{proposition}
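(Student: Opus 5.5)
We derive Proposition~\ref{prop:suk-zeng} directly from Theorem~\ref{thm:suk-zeng-general-position} by choosing $h$ suitably. Set $h=2d$, so that $d+h=3d$. We check the numerical hypothesis of Theorem~\ref{thm:suk-zeng-general-position} for every $d\ge3$.
\begin{itemize}
\item If $d$ is odd, we need $2d^2+2>2d+4d=6d$. This holds for $d\ge3$, since $18+2>18$ and the left side grows faster.
\item If $d$ is even (so $d\ge4$), we need $2d^2+2>2d+6d=8d$. At $d=4$ this reads $34>32$, and for larger even $d$ the left side grows faster.
\end{itemize}
Also $h=2d\ge3$. Hence $\alpha_{d,2d}(N)\le N^{1/2+o(1)}$.

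Now fix $\eta>0$. By the definition of $\alpha_{d,h}(N)$, the bound $\alpha_{d,2d}(N)\le N^{1/2+o(1)}$ means that for every sufficiently large $N$ there is an $N$-point set $P\subset\R^d$ with two properties. First, no $d+h=3d$ points of $P$ lie on a hyperplane. Second, the largest subset of $P$ in general position has size at most $N^{1/2+o(1)}$, which is $<N^{1/2+\eta}$ once $N$ is large enough that the $o(1)$ term is below $\eta$. Consequently every subset of at least $N^{1/2+\eta}$ points fails to be in general position, i.e.\ it contains $d+1$ points on a hyperplane. This gives the required point sets for all sufficiently large $N$, in particular for arbitrarily large $N$.

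The only potential obstacle is the interpretation of the asymptotic statement ``$\alpha_{d,h}(N)\le N^{1/2+o(1)}$''. It might be read as holding only along a sequence rather than for all $N$. Either reading suffices, since the proposition only asks for arbitrarily large $N$. One should also note that $\alpha_{d,h}(N)$ is a minimum over configurations, so its upper bound is witnessed by an actual configuration $P$. The inequality checks above are routine.
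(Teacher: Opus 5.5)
Your proposal is correct and follows essentially the same route as the paper: take $h=2d$ so that $d+h=3d$, verify $2d^2+2>6d$ for odd $d\ge3$ and $2d^2+2>8d$ for even $d\ge4$, and unpack the definition of $\alpha_{d,2d}(N)$ to obtain a witnessing configuration whose largest general-position subset has fewer than $N^{1/2+\eta}$ points. You also check explicitly that $h=2d\ge3$ and note that either reading of the $o(1)$ bound suffices; the paper leaves both points implicit.
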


\begin{proof}
Apply Theorem~\ref{thm:suk-zeng-general-position} with $h=2d$. If $d$ is odd, its numerical condition becomes $2d^2+2>6d$, which holds for every odd $d\ge3$. If $d$ is even, it becomes $2d^2+2>8d$, which holds for every even $d\ge4$. Thus $\alpha_{d,2d}(N)\le N^{1/2+o(1)}$.

By the definition of $\alpha_{d,2d}(N)$, for arbitrarily large $N$ there is an $N$-point set $P\subset\R^d$ with no $d+2d=3d$ points on a hyperplane whose largest subset in general position has size at most $N^{1/2+o(1)}$. For sufficiently large $N$, this is smaller than $N^{1/2+\eta}$. Hence every subset of $P$ of size at least $N^{1/2+\eta}$ fails to be in general position, and therefore contains $d+1$ points on a hyperplane.
\end{proof}

The next elementary duality observation, that is similar to the one used in \cite{KS21}, converts such a point set into a piercing example.

\begin{lemma}[Point--hyperplane duality]\label{lem:duality-lower}
Suppose that $P\subset\R^d$ has $N$ points, no $M+1$ of them lie on a hyperplane, and every $m$-point subset contains $q$ points on a hyperplane. Then there is a family of $N$ nonempty compact convex sets with the $(m,q)$-property and piercing number at least $N/M$.
\end{lemma}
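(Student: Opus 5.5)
We plan to use the standard point--hyperplane duality, where each point of $P$ becomes a convex set of hyperplanes. For every $x\in P$ we want a compact convex set $K_x$ whose points encode hyperplanes through $x$. Parametrize non-vertical hyperplanes as $\{y: y_d=a_1y_1+\dots+a_{d-1}y_{d-1}+b\}$ with parameter $(a,b)\in\R^d$. After a generic rotation of $P$, no vertical hyperplane contains $q\ge d$ points of $P$ that matter. For simplicity, we will rotate so that every hyperplane spanned by points of $P$ is non-vertical; this is possible because there are finitely many such hyperplanes and each forbids only a measure-zero set of directions. The hyperplanes through $x$ then correspond to the affine hyperplane $x^*=\{(a,b): x_d=\langle a,x'\rangle+b\}$ in parameter space, which is convex but not compact.

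We then take $B$ to be a closed ball in parameter space containing the (finitely many) parameter points of all hyperplanes spanned by at least $d$ points of $P$ that lie on a common non-vertical hyperplane. The sets are $K_x=x^*\cap B$, which are compact and convex. Nonemptiness is harmless: we enlarge $B$ so that it meets every $x^*$, for instance by including the parameters of one hyperplane through each $x$.

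Next we check the $(m,q)$-property. Given any $m$ of the sets, the corresponding $m$-point subset contains $q$ points on a common hyperplane $h$. That hyperplane is non-vertical and its parameter lies in $B$; we ensure this by putting into $B$ the parameters of every hyperplane containing at least $q$ points of $P$, where we may take such $h$ spanned by its points. Its parameter point lies in all $q$ of the $K_x$, so these $q$ sets intersect. The one subtlety is when the $q$ points span only a lower-dimensional flat. In that case we pick any hyperplane containing them that is spanned by points of $P$ together with extra generic points. More cleanly, we choose $B$ to contain, for each $q$-subset lying on some hyperplane, the parameter of one chosen non-vertical hyperplane containing it; there are finitely many such subsets, so $B$ stays bounded.

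Finally, for the piercing bound, any point $(a,b)$ of parameter space is a hyperplane, and it lies in $K_x$ only if $x$ is on that hyperplane. By hypothesis at most $M$ points of $P$ lie on it, so each piercing point hits at most $M$ sets, and at least $N/M$ points are needed. The main obstacle is the bookkeeping for vertical and degenerate hyperplanes, which the generic rotation and the finite choice of representatives handle.
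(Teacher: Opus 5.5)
Your proposal is correct and follows the paper's argument: a generic change of coordinates, point--hyperplane duality, and intersection of the dual hyperplanes with a large ball. The ball contains one point of each dual hyperplane and one common point for each co-hyperplanar $q$-subset, and since every point of parameter space lies in at most $M$ of the sets, any transversal has at least $N/M$ points. It would help to state the genericity you actually use, namely that no flat spanned by points of $P$ contains the vertical direction; this is what guarantees a non-vertical hyperplane through every co-hyperplanar $q$-subset.
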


\begin{proof}
After a generic change of coordinates, apply the standard point--hyperplane duality. We obtain $N$ hyperplanes such that every $m$ of them contain $q$ through a common point, while no point lies on more than $M$ hyperplanes.

Choose a closed ball containing one point of every hyperplane and one common point for every intersecting $q$-tuple, and replace each hyperplane by its intersection with the ball. The resulting sets are nonempty, compact, and convex. They still satisfy the $(m,q)$-property, and every point belongs to at most $M$ of them. Hence every transversal has at least $N/M$ points.
\end{proof}

\begin{proof}[Proof of Theorem~\ref{thm:convex-lower}]
Fix $\eta>0$ and take the point set from Proposition~\ref{prop:suk-zeng}. Lemma~\ref{lem:duality-lower}, with $m=\lceil N^{1/2+\eta}\rceil$, $q=d+1$, and $M=3d-1$, gives a family with the $(m,d+1)$-property and piercing number at least $N/(3d-1)$.

Writing $p=m$, we have $N\ge c_{d,\eta}p^{1/(1/2+\eta)}$. Since $\eta$ can be arbitrarily small, this is at least $c_{d,\gamma}p^{2-\gamma}$ for every fixed $\gamma>0$ and arbitrarily large $p$.
\end{proof}

\subsection{Amplification by unions}
The passage from convex sets to unions is given by a simple amplification lemma.

\begin{lemma}[Amplification by unions]\label{lem:amplification}
For all $u\ge q\ge d+1$ and $s\ge1$, one has $\HD_d^{(s)}(2(u-1)(q-1)+1,q)\ge (s+1)\HD_d(u,q)$.
\end{lemma}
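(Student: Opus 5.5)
The plan is to build the $s$-convex family out of many translated copies of an extremal convex family. The union structure then forces $s+1$ copies to be pierced completely, while an averaging argument over the copies yields the $(p,q)$-property for $p=2(u-1)(q-1)+1$.

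\emph{Construction.} Put $h=\HD_d(u,q)$. By the definition of $\HD_d(u,q)$, there is a finite family $\mathcal G$ of compact convex sets in $\R^d$ with the $(u,q)$-property and $\tau(\mathcal G)\ge h$. Choose $2s$ translation vectors $v_1,\ldots,v_{2s}$ so far apart that the regions $R_j=\bigcup_{G\in\mathcal G}(G+v_j)$ are pairwise disjoint. For every $s$-subset $J\subseteq[2s]$ and every choice function $(G_j)_{j\in J}$ with $G_j\in\mathcal G$, form the $s$-convex set
\[
F_{J,(G_j)}=\bigcup_{j\in J}(G_j+v_j).
\]
Let $\F$ be the (finite) family of all such sets.

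\emph{Piercing number.} Let $T$ be a set of fewer than $(s+1)h$ points. At most $s$ regions $R_j$ can contain at least $h$ points of $T$, so at least $s$ indices $j$ satisfy $|T\cap R_j|<h$. Call a set $J$ of $s$ such indices. For each $j\in J$, the translate $T\cap R_j-v_j$ has fewer than $h\le\tau(\mathcal G)$ points, so it misses some $G_j\in\mathcal G$. Then $F_{J,(G_j)}$ misses $T$: its component in $R_j$ avoids $T$ for each $j\in J$, and $T$ has no points of $R_j$ outside $R_j$. Hence $\tau(\F)\ge(s+1)h$.

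\emph{The $(p,q)$-property with $p=2(u-1)(q-1)+1$.} Take any $p$ members of $\F$. They have $ps$ components in total, distributed among $2s$ regions. By averaging, some region $R_j$ receives components from at least $p/2>(u-1)(q-1)$ of the chosen members. These components are translates $G+v_j$ with $G\in\mathcal G$, forming a multiset of size at least $(u-1)(q-1)+1$ drawn from $\mathcal G$. Either some $G$ appears $q$ times, and then those $q$ members share that component. Or there are at least $u$ distinct elements of $\mathcal G$, and then the $(u,q)$-property of $\mathcal G$ gives $q$ of them with a common point. In both cases, $q$ of the chosen members intersect inside $R_j$.

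This is the same multiset pigeonhole used in the proof of Proposition~\ref{prop:basic}. Members of $\F$ may have equal components, but that does not matter, since such members are still distinct sets of $\F$. Together the two parts give $\HD_d^{(s)}(p,q)\ge(s+1)h$, as claimed.

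The main obstacle is the piercing count. Using only $s+1$ regions, or one fixed $G$ per union, does not work, because a single underpierced region is not enough to produce a missed $s$-convex set. Two features resolve this: allowing an independent $G_j$ in each region, and using exactly $2s$ regions. With $2s$ regions, at least $s$ of them are underpierced, and the same choice of $2s$ is what keeps the averaging loss at a factor of only $2$.
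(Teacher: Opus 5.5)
Your proof is correct and follows the paper's argument essentially step for step: $2s$ disjoint copies of an extremal convex family, unions over $s$-subsets of copies with an independent member chosen in each, averaging over the $2s$ copies plus the multiset pigeonhole with $R=(u-1)(q-1)+1$ for the $(2R-1,q)$-property, and the observation that $s$ insufficiently pierced copies yield an unpierced union. Your point count per region (fewer than $(s+1)h$ points leave at least $s$ regions with fewer than $h$ points each) is simply the contrapositive of the paper's ``at least $s+1$ covered balls'' formulation.
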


\begin{proof}[Proof of Lemma~\ref{lem:amplification}]
Let $\mathcal B$ be a family of compact convex sets with the $(u,q)$-property and piercing number $t=\HD_d(u,q)$. Place $2s$ affine copies $\mathcal B_1,\ldots,\mathcal B_{2s}$ in pairwise disjoint balls.

Form a new family $\F$ as follows. Choose $s$ of the balls, choose one member of the corresponding copy in each ball, and take the union of the $s$ chosen sets. Thus every member of $\F$ is $s$-convex.

Put $R=(u-1)(q-1)+1$ and take $2R-1$ members of $\F$. They use altogether $s(2R-1)>2s(R-1)$ ball occurrences, so one ball occurs in at least $R$ of the unions. Inside that ball we see a labelled multiset of $R$ members of $\mathcal B$. Either it contains $u$ distinct members, in which case $q$ intersect by the $(u,q)$-property, or one member occurs at least $q$ times. Hence the same $q$ unions intersect. Thus $\F$ has the $(2R-1,q)$-property.

Now let $N$ pierce $\F$. Call a ball covered if the points of $N$ inside it pierce the entire copy $\mathcal B_j$. If $s$ balls were not covered, we could choose in each one a member missed by $N$; their union would be an unpierced member of $\F$. Hence at least $s+1$ balls are covered. Each covered copy requires at least $t$ points, and the balls are disjoint, so $|N|\ge(s+1)t$.
\end{proof}

\begin{proof}[Proof of Theorem~\ref{thm:s-lower}]
Apply Lemma~\ref{lem:amplification} with $q=d+1$ to the families supplied by Theorem~\ref{thm:convex-lower}. If the base parameter is $u$, the new parameter is $p=2d(u-1)+1$, so $u=\Theta_d(p)$. Therefore $\HD_d^{(s)}(p,d+1)\ge(s+1)c_{d,\gamma}u^{2-\gamma}\ge c'_{d,\gamma}s p^{2-\gamma}$.
\end{proof}

\begin{remark}
Lemma~\ref{lem:amplification} transfers any convex lower bound to $s$-convex sets with a linear gain in $s$. For example, applying it to the planar bounds of~\cite{KS21} gives the lower bound $\HD_2^{(s)}(p,q)\ge s p^{1+\Omega(1/q)}$ for unions in the plane.
\end{remark}

\subsection{A lower bound on $\HD_d^{(s)}(p,q)$ for an arbitrarily large $q$}

Unlike the preceding amplification argument, which starts from a lower-bound example for ordinary convex sets, we now construct the family directly. Inside a simplex, we build almost-full $s$-convex sets that can avoid any prescribed $d(s-1)$ points. 

We use the following consequence of the Steiner convex partition theorem of Dumitrescu, Har-Peled and T\'oth~\cite{DumitrescuHarPeledToth14}. A Steiner convex partition tiles the convex hull of a finite point set by convex bodies whose interiors avoid the points, and every $n$ points in $\R^d$ admit such a partition with at most $\lceil(n-1)/d\rceil$ tiles.

\begin{lemma}[Small holes avoiding prescribed points]\label{lem:small-holes}
Let $B\subset\R^d$ be a full-dimensional simplex, let $T\subseteq B$ have at most $d(s-1)$ points, and let $\eta>0$. Then there is a compact $s$-convex set $F\subseteq B$ such that $F\cap T=\emptyset$ and $\operatorname{vol}(B\setminus F)<\eta$.
\end{lemma}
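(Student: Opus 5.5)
The idea is to use the Steiner convex partition theorem of~\cite{DumitrescuHarPeledToth14} on the point set $T\cup V$, where $V$ is the vertex set of $B$. Then shrink each tile slightly so that it avoids $T$, while losing only a small amount of volume.

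\textbf{Step 1: partition.} Put $X=T\cup V$. Then $|X|\le d(s-1)+d+1=ds+1$, and $\operatorname{conv}(X)=B$ because $T\subseteq B$. The Steiner convex partition theorem gives a tiling of $B$ by at most $\lceil (|X|-1)/d\rceil\le\lceil ds/d\rceil=s$ convex bodies $C_1,\dots,C_k$, with $k\le s$, whose interiors avoid $X$ and in particular avoid $T$. If $|T|$ is smaller, we may add dummy points or simply note that the count only improves. The tiles may be taken compact and full-dimensional, and their total volume equals $\operatorname{vol}(B)$.

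\textbf{Step 2: shrink the tiles.} The points of $T$ may lie on the boundaries of the tiles, so we cannot take the tiles themselves. For each $i$, fix an interior point $c_i$ of $C_i$ and let $C_i^\lambda=c_i+\lambda(C_i-c_i)$ for some $\lambda<1$. Each $C_i^\lambda$ is compact and convex, and it is contained in $\operatorname{int} C_i\cup\{c_i\}\subseteq \operatorname{int}C_i$. A homothety with ratio $\lambda<1$ centered at an interior point maps a convex body into its interior, so $C_i^\lambda\cap T=\emptyset$. Its volume is $\lambda^d\operatorname{vol}(C_i)$.

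\textbf{Step 3: assemble $F$.} Set $F=\bigcup_i C_i^\lambda$. This is a compact $s$-convex subset of $B$ disjoint from $T$, and
\[
\operatorname{vol}(B\setminus F)=\operatorname{vol}(B)-\sum_i\lambda^d\operatorname{vol}(C_i)=(1-\lambda^d)\operatorname{vol}(B),
\]
using that the tiles have disjoint interiors and so do the shrunk copies. Choosing $\lambda$ close enough to $1$ makes this less than $\eta$.

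The main obstacle is checking the precise form of the Steiner partition statement. We need it to apply to an arbitrary, possibly degenerate, finite set, with interiors avoiding all points and tiles covering $\operatorname{conv}(X)$. We also need the exact bound $\lceil (n-1)/d\rceil$, since this is what makes the count come out as exactly $s$ once we include the $d+1$ vertices. If coincident or degenerate points cause trouble, a point of $T$ that coincides with a vertex can be dropped from $X$, which only lowers the count. Alternatively, one can perturb slightly, since only interiors matter.
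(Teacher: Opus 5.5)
Your proposal is correct and follows the paper's proof essentially step for step. You adjoin the $d+1$ vertices of $B$ to $T$, apply the Steiner convex partition theorem to get at most $\lceil ds/d\rceil=s$ tiles whose interiors avoid $T$, and shrink each tile by a homothety about an interior point, so that $\operatorname{vol}(B\setminus F)=(1-\lambda^d)\operatorname{vol}(B)$. Your extra remarks (that the shrunken tiles lie in the tile interiors, and that the tile count only improves when $|T|$ is smaller) just make explicit details the paper leaves implicit.
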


\begin{proof}
Adjoin the vertices of $B$ to $T$. The resulting set has convex hull $B$ and at most $ds+1$ points. By the Steiner convex partition theorem, $B$ can therefore be tiled by at most $\lceil(ds+1-1)/d\rceil=s$ convex bodies whose interiors avoid all these points.

Shrink each tile slightly, by the same homothety factor $1-\lambda$, about an interior point, and let $F_\lambda$ be the union of the shrunken tiles. Then $F_\lambda$ is compact, is a union of at most $s$ convex sets, and avoids $T$. Since volume is multiplied by $(1-\lambda)^d$, we have $\operatorname{vol}(B\setminus F_\lambda)=\bigl(1-(1-\lambda)^d\bigr)\operatorname{vol}(B)$. This is smaller than $\eta$ when $\lambda>0$ is sufficiently small.
\end{proof}

\begin{proof}[Proof of Theorem~\ref{thm:large-q-lower}]
For $d=1$, this is the lower-bound construction from Section~\ref{sec:one-dimensional}. Assume $d\ge2$, let $B$ be a full-dimensional simplex in $\R^d$, and choose $\eta<\operatorname{vol}(B)/q$.

For every ordered $d(s-1)$-tuple $\mathbf{x}$ of points of $B$, Lemma~\ref{lem:small-holes} gives a compact $s$-convex set $G_{\mathbf{x}}\subseteq B$ avoiding all entries of $\mathbf{x}$, whose complement $U_{\mathbf{x}}=B\setminus G_{\mathbf{x}}$ has volume less than $\eta$. Thus $\mathbf{x}\in U_{\mathbf{x}}^{\,d(s-1)}$, and these Cartesian powers form an open cover of the compact space $B^{d(s-1)}$. Choose a finite subcover $U_1^{\,d(s-1)},\ldots,U_N^{\,d(s-1)}$, adding further such holes if necessary so that $N\ge q$, and put $G_i=B\setminus U_i$.

Every $G_i$ is compact and $s$-convex. Any $q$ of them intersect, since the corresponding $q$ holes have total volume less than $q\eta<\operatorname{vol}(B)$ and therefore cannot cover $B$. On the other hand, any collection of at most $d(s-1)$ points misses some $G_i$: discard the points outside $B$, extend the remaining points to an ordered $d(s-1)$-tuple, and use the finite subcover to find a hole containing all of them. Hence $\tau(\{G_1,\ldots,G_N\})\ge d(s-1)+1$.

Finally, add $p-q$ pairwise disjoint compact convex sets outside $B$, also disjoint from every $G_i$. Among any $p$ members of the enlarged family, at least $q$ are members of $\{G_1,\ldots,G_N\}$ and therefore intersect. Thus the enlarged family has the $(p,q)$-property. The added sets require $p-q$ separate piercing points, in addition to the $d(s-1)+1$ points required inside $B$. Therefore $\HD_d^{(s)}(p,q)\ge p-q+d(s-1)+1$.
\end{proof}

\section*{Acknowledgements}

\paragraph{Use of generative AI.} The authors used ChatGPT as an auxiliary tool in discussions of computational and technical details and in editing parts of the exposition.

\printbibliography

@article{HadwigerDebrunner57,
  author  = {Hadwiger, Hugo and Debrunner, Hans},
  title   = {{\"U}ber eine Variante zum Hellyschen Satz},
  journal = {Archiv der Mathematik},
  volume  = {8},
  pages   = {309--313},
  year    = {1957},
  doi     = {10.1007/BF01898794}
}

@article{AlonKleitman92,
  author  = {Alon, Noga and Kleitman, Daniel J.},
  title   = {Piercing Convex Sets and the Hadwiger--Debrunner $(p,q)$-Problem},
  journal = {Advances in Mathematics},
  volume  = {96},
  number  = {1},
  pages   = {103--112},
  year    = {1992},
  doi     = {10.1016/0001-8708(92)90052-M}
}

@article{KatchalskiLiu79,
  author  = {Katchalski, Meir and Liu, Andrew C. F.},
  title   = {A Problem of Geometry in $\mathbb{R}^n$},
  journal = {Proceedings of the American Mathematical Society},
  volume  = {75},
  number  = {2},
  pages   = {284--288},
  year    = {1979},
  doi     = {10.1090/S0002-9939-1979-0532152-6}
}

@article{AlonBFK92,
  author  = {Alon, Noga and B{\'a}r{\'a}ny, Imre and
             F{\"u}redi, Zolt{\'a}n and Kleitman, Daniel J.},
  title   = {Point Selections and Weak $\varepsilon$-Nets for Convex Hulls},
  journal = {Combinatorics, Probability and Computing},
  volume  = {1},
  number  = {3},
  pages   = {189--200},
  year    = {1992},
  doi     = {10.1017/S0963548300000225}
}

@article{ChazelleEtAl95,
  author  = {Chazelle, Bernard and Edelsbrunner, Herbert and
             Grigni, Michelangelo and Guibas, Leonidas J. and
             Sharir, Micha and Welzl, Emo},
  title   = {Improved Bounds on Weak $\varepsilon$-Nets for Convex Sets},
  journal = {Discrete \& Computational Geometry},
  volume  = {13},
  pages   = {1--15},
  year    = {1995},
  doi     = {10.1007/BF02574025}
}

@article{MatousekWagner04,
  author  = {Matou{\v{s}}ek, Ji{\v{r}}{\'i} and Wagner, Uli},
  title   = {New Constructions of Weak $\varepsilon$-Nets},
  journal = {Discrete \& Computational Geometry},
  volume  = {32},
  number  = {2},
  pages   = {195--206},
  year    = {2004},
  doi     = {10.1007/s00454-004-1116-4}
}

@inproceedings{AlonSmorodinsky26,
  author    = {Alon, Noga and Smorodinsky, Shakhar},
  title     = {Extended {VC}-Dimension, and Radon and Tverberg Type
               Theorems for Unions of Convex Sets},
  booktitle = {Proceedings of the 2026 Annual ACM--SIAM Symposium
               on Discrete Algorithms (SODA)},
  pages     = {6223--6232},
  publisher = {SIAM},
  year      = {2026}
}

@misc{GeShuXu26,
  author        = {Ge, Gennian and Shu, Yang and Xu, Zixiang},
  title         = {Tverberg's Theorem for Unions of Convex Sets:
                   Sharp Bounds and Colored Extensions},
  year          = {2026},
  eprint        = {2607.12449},
  archivePrefix = {arXiv}
}

@misc{RocheNewton26,
  author        = {Roche-Newton, Oliver},
  title         = {A General-Position Problem for Planar Line Arrangements},
  year          = {2026},
  eprint        = {2607.25742},
  archivePrefix = {arXiv}
}

@article{BMN11,
  author  = {Bukh, Boris and Matou{\v{s}}ek, Jiri and Nivasch, Gabriel},
  title   = {Lower bounds for weak epsilon-nets and stair-convexity},
  journal = {Israel Journal of Mathematics},
  volume  = {182},
  pages   = {199--228},
  year    = {2011},
  doi     = {10.1007/s11856-011-0029-1}
}

@article{BS18,
  author  = {J. Balogh and J. Solymosi},
  title   = {On the number of points in general position in the plane},
  journal = {Discrete Analysis},
  pages   = {2018:16:1--20},
  year    = {2018}
}

@article{Kaiser97,
  author  = {Kaiser, Tom{\'a}{\v{s}}},
  title   = {Transversals of $d$-Intervals},
  journal = {Discrete \& Computational Geometry},
  volume  = {18},
  number  = {2},
  pages   = {195--203},
  year    = {1997},
  doi     = {10.1007/PL00009315}
}

@article{Alon98,
  author  = {Alon, Noga},
  title   = {Piercing $d$-Intervals},
  journal = {Discrete \& Computational Geometry},
  volume  = {19},
  number  = {3},
  pages   = {333--334},
  year    = {1998},
  doi     = {10.1007/PL00009349}
}

@article{AlonKalai95,
  author  = {Noga Alon and Gil Kalai},
  title   = {Bounding the piercing number},
  journal = {Discrete \& Computational Geometry},
  volume  = {13},
  number  = {1},
  pages   = {245--256},
  year    = {1995},
  doi     = {10.1007/BF02574042}
}

@article{Amenta96,
  author  = {Nina Amenta},
  title   = {A short proof of an interesting {Helly}-type theorem},
  journal = {Discrete \& Computational Geometry},
  volume  = {15},
  number  = {4},
  pages   = {423--427},
  year    = {1996},
  doi     = {10.1007/BF02711517}
}

@article{KalaiMeshulam08,
  author  = {Gil Kalai and Roy Meshulam},
  title   = {Leray numbers of projections and a topological {Helly}-type theorem},
  journal = {Journal of Topology},
  volume  = {1},
  number  = {3},
  pages   = {551--556},
  year    = {2008},
  doi     = {10.1112/jtopol/jtn010}
}

@article{KST18,
  author  = {Chaya Keller and Shakhar Smorodinsky and G{\'a}bor Tardos},
  title   = {Improved bounds on the {Hadwiger--Debrunner} numbers},
  journal = {Israel Journal of Mathematics},
  volume  = {225},
  number  = {2},
  pages   = {925--945},
  year    = {2018},
  doi     = {10.1007/s11856-018-1685-1}
}

@article{Larman68,
  author  = {David G. Larman},
  title   = {Helly type properties of unions of convex sets},
  journal = {Mathematika},
  volume  = {15},
  number  = {1},
  pages   = {53--59},
  year    = {1968},
  doi     = {10.1112/S0025579300002370}
}

@book{Matousek02,
  author    = {Ji{\v{r}}{\'i} Matou{\v{s}}ek},
  title     = {Lectures on Discrete Geometry},
  series    = {Graduate Texts in Mathematics},
  volume    = {212},
  publisher = {Springer},
  address   = {New York},
  year      = {2002},
  doi       = {10.1007/978-1-4613-0039-7}
}

@article{Matousek97,
  author  = {Ji{\v{r}}{\'i} Matou{\v{s}}ek},
  title   = {A {Helly}-type theorem for unions of convex sets},
  journal = {Discrete \& Computational Geometry},
  volume  = {18},
  number  = {1},
  pages   = {1--12},
  year    = {1997},
  doi     = {10.1007/PL00009305}
}

@inproceedings{Rubin21,
  author        = {Natan Rubin},
  title         = {Stronger Bounds for Weak Epsilon-Nets in Higher Dimensions},
  booktitle     = {Proceedings of the 53rd Annual ACM SIGACT Symposium on Theory of Computing},
  pages         = {989--1002},
  publisher     = {ACM},
  year          = {2021},
  doi           = {10.1145/3406325.3451062},
  eprint        = {2104.12654},
  archivePrefix = {arXiv},
  primaryClass  = {cs.CG},
  note          = {Revised arXiv version, 2023}
}

@article{Rubin22,
  author  = {Natan Rubin},
  title   = {An Improved Bound for Weak Epsilon-Nets in the Plane},
  journal = {Journal of the ACM},
  volume  = {69},
  number  = {5},
  pages   = {32:1--32:35},
  year    = {2022},
  doi     = {10.1145/3555985}
}

@article{deCaen83,
  author  = {D. de Caen},
  title   = {Extension of a theorem of {Moon} and {Moser} on complete subgraphs},
  journal = {Ars Combinatoria},
  volume  = {16},
  pages   = {5--10},
  year    = {1983}
}

@article{KS21,
  author  = {Chaya Keller and Shakhar Smorodinsky},
  title   = {A new lower bound on {Hadwiger--Debrunner} numbers in the plane},
  journal = {Israel Journal of Mathematics},
  volume  = {244},
  number  = {2},
  pages   = {649--680},
  year    = {2021},
  doi     = {10.1007/s11856-021-2185-2}
}

@article{SukZeng26,
  author  = {Andrew Suk and Ji Zeng},
  title   = {On higher dimensional point sets in general position},
  journal = {Combinatorics, Probability and Computing},
  volume  = {35},
  number  = {1},
  pages   = {134--148},
  year    = {2026},
  doi     = {10.1017/S0963548325100254},
  eprint  = {2211.15968},
  archivePrefix = {arXiv},
  primaryClass = {math.CO}
}

@inproceedings{BulavkaGoodarziTancer21,
  author    = {Denys Bulavka and Afshin Goodarzi and Martin Tancer},
  title     = {Optimal Bounds for the Colorful Fractional {Helly} Theorem},
  booktitle = {37th International Symposium on Computational Geometry (SoCG 2021)},
  series    = {Leibniz International Proceedings in Informatics (LIPIcs)},
  volume    = {189},
  pages     = {19:1--19:14},
  publisher = {Schloss Dagstuhl--Leibniz-Zentrum fuer Informatik},
  year      = {2021},
  doi       = {10.4230/LIPIcs.SoCG.2021.19},
  eprint    = {2010.15765},
  archivePrefix = {arXiv},
  primaryClass  = {math.CO}
}

@article{Zerbib19,
  author  = {Shira Zerbib},
  title   = {The $(p,q)$ Property in Families of $d$-Intervals and $d$-Trees},
  journal = {Discrete Mathematics},
  volume  = {342},
  number  = {4},
  pages   = {1089--1097},
  year    = {2019},
  doi     = {10.1016/j.disc.2018.12.019},
  eprint  = {1703.02939},
  archivePrefix = {arXiv},
  primaryClass  = {math.CO}
}

@article{Matousek01,
  author  = {Matou{\v{s}}ek, Ji{\v{r}}{\'i}},
  title   = {Lower Bounds on the Transversal Numbers of $d$-Intervals},
  journal = {Discrete \& Computational Geometry},
  volume  = {26},
  number  = {3},
  pages   = {283--287},
  year    = {2001},
  doi     = {10.1007/s00454-001-0037-8}
}

@article{DumitrescuHarPeledToth14,
  author  = {Dumitrescu, Adrian and Har-Peled, Sariel and T{\'o}th, Csaba D.},
  title   = {Minimum Convex Partitions and Maximum Empty Polytopes},
  journal = {Journal of Computational Geometry},
  volume  = {5},
  number  = {1},
  pages   = {86--103},
  year    = {2014},
  doi     = {10.20382/jocg.v5i1a5}
}

\appendix

\section{Bootstrapping for the polylogarithmic range}
\label{app:one-dimensional-bootstrapping}

We prove the upper bound in Theorem~\ref{thm:one-dimensional-polylog}. The argument has two stages. First, we iterate the $(p,q)$ dichotomy until the general one-dimensional estimate yields a bound of the form $p-q+O(s)$. We then use Zerbib's diagonal estimate at a sequence of scales to improve the additive term to $2s+1$.

Throughout the appendix, write $H_s(p,q)=\HD_1^{(s)}(p,q)$ and
\[
\Delta_s(p,q)=H_s(p,q)-(p-q).
\]
Thus $\Delta_s(p,q)$ measures only the additive cost beyond the main term $p-q$. We also put $L=\max\{2,\lceil\log_2(2s)\rceil\}$.

We use two consequences of the preceding results. First, Theorem~\ref{thm:one-dimensional-near-diagonal} gives an absolute constant $c_1>0$ such that
\begin{equation}
\label{eq:one-dimensional-near-input}
q\ge L(p-q+1)
\quad\Longrightarrow\quad
\Delta_s(p,q)\le c_1s.
\end{equation}
Thus a constant-additive bound is already available when $q$ is large compared with the gap $p-q$.

Second, the proof of Proposition~\ref{prop:bootstrap} is dimension-independent except for the estimate applied to the final smaller pair. Replacing Proposition~\ref{prop:basic} there by the first assertion of Theorem~\ref{thm:one-dimensional-intro}, we obtain, for every integer $1\le k<q$,
$H_s(p,q)\le p-q+1+O(s^{1+1/k}kp/q)$. Taking $k=L$ and using $s^{1/L}\le2$, we obtain an absolute constant $c_2>0$ such that, whenever $q>L$,
\begin{equation}
\label{eq:one-dimensional-weak-input}
\Delta_s(p,q)\le1+a\frac{p}{q},
\qquad\text{where } a=c_2sL.
\end{equation}

The following recurrence records the only dichotomy calculation needed below. The parameter $B$ is the amount removed from $q$, while $D$ is the amount by which the gap $p-q$ is reduced.

\begin{lemma}[Dichotomy recurrence for the excess]
\label{lem:one-dimensional-recurrence}
Let $p>q\ge2$, and let $B,D$ be positive integers with $D\le p-q$ and $B\le q-2$. Then
\[
\Delta_s(p,q)\le
\max\left\{
\Delta_s(p-B-D,q-B)-D,\;
1+\max_{1\le u\le D}\Delta_s(B+D,B+u)
\right\}.
\]
\end{lemma}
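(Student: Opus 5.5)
The plan is to fix a family $\F$ of $s$-intervals with the $(p,q)$-property and apply Lemma~\ref{lem:dichotomy} with $p'=p-B-D$ and $q'=q-B$. The hypotheses $D\le p-q$ and $B\le q-2$ guarantee $p'\ge q'\ge2$, so $H_s(p',q')$ makes sense, and also $p'<p$, $q'<q$. I would then treat the two alternatives of the dichotomy separately.

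\emph{First case: $\F$ satisfies the $(p',q')$-property.} Then
\[
\tau(\F)\le H_s(p',q')=(p'-q')+\Delta_s(p',q')=(p-q-D)+\Delta_s(p-B-D,q-B).
\]
This is exactly the first term of the maximum.

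\emph{Second case: $\F$ does not satisfy the $(p',q')$-property.} Then there is a subfamily $S\subseteq\F$ with $|S|=p-B-D$ that contains no $q-B$ intersecting members. Here I would not use the crude output $(B+D,B+1)$ of Lemma~\ref{lem:dichotomy}. Applying it bluntly and piercing $S$ as in Proposition~\ref{prop:bootstrap} loses an additive $D$, and getting rid of this loss is the main obstacle. The fix is to keep track of how intersecting $S$ actually is.

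Let $r$ be the largest number of members of $S$ with a common point, so $1\le r\le q-B-1$. Since $\F$ has at least $p$ members, $\F\setminus S$ has at least $B+D$ members. Take any $B+D$ members $T$ of $\F\setminus S$. The $p$-family $S\cup T$ contains $q$ intersecting sets, and at most $r$ of them lie in $S$. Hence at least $q-r$ of them lie in $T$. Define $u$ by $q-r=B+u$.

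\begin{itemize}
\item Since $r\le q-B-1$, we get $u\ge1$.
\item Since $q-r\le|T|=B+D$, we get $u\le D$.
\end{itemize}

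So $\F\setminus S$ satisfies the $(B+D,B+u)$-property for some $1\le u\le D$, and therefore
\[
\tau(\F\setminus S)\le (D-u)+\Delta_s(B+D,B+u).
\]

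To pierce $S$, take $r$ intersecting members of $S$ and pierce them with one point. Pierce each of the remaining $p-B-D-r$ members of $S$ separately. Substituting $r=q-B-u$, this gives
\[
\tau(S)\le 1+p-B-D-r=p-q-D+u+1.
\]

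Adding the two bounds, $u$ and $D$ cancel:
\[
\tau(\F)\le (p-q)+1+\Delta_s(B+D,B+u)\le (p-q)+1+\max_{1\le u\le D}\Delta_s(B+D,B+u).
\]
Subtracting $p-q$ and taking the worse of the two cases yields the stated inequality.
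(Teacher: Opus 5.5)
Your proposal is correct and follows essentially the same argument as the paper: apply Lemma~\ref{lem:dichotomy} at $(p-B-D,q-B)$, and in the failing case write the largest intersecting subfamily of $\mathcal S$ as having size $q-B-u$. From this you get the $(B+D,B+u)$-property outside $\mathcal S$ with $1\le u\le D$, and the two occurrences of $D-u$ cancel; your explicit check that $p'\ge q'\ge 2$ and $B+u\ge 2$ just makes precise what the paper leaves implicit.
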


\begin{proof}
Apply Lemma~\ref{lem:dichotomy} to the smaller pair $(p-B-D,q-B)$. If the family satisfies this property, then
$H_s(p,q)\le H_s(p-B-D,q-B)$. Since the gap of the smaller pair is $(p-q)-D$, this gives the first term in the displayed recurrence.

Otherwise, there is a subfamily $\mathcal S$ of size $p-B-D$ containing no intersecting $(q-B)$-tuple. Suppose that the largest intersecting subfamily of $\mathcal S$ has size $q-B-u$, where $u\ge1$.

Choose any $B+D$ members outside $\mathcal S$. Together with $\mathcal S$ they form a $p$-subfamily, so some $q$ of them intersect. At least $q-(B+D)=q-B-D$ of these intersecting sets lie in $\mathcal S$, and hence $u\le D$. Since at most $q-B-u$ of them lie in $\mathcal S$, at least $B+u$ lie outside it. As the chosen $B+D$ members were arbitrary, the family outside $\mathcal S$ satisfies the $(B+D,B+u)$-property.

A largest intersecting subfamily of $\mathcal S$ can be pierced by one point, and all remaining members separately. This uses
$p-B-D-(q-B-u)+1=p-q-D+u+1$
points. The remaining family has gap $D-u$, so after subtracting the original gap $p-q$, the total excess is $1+\Delta_s(B+D,B+u)$.
\end{proof}

The advantage of working with $\Delta_s$ is now visible. In the second branch, the exceptional family costs $p-q-D+u+1$ points, while the remaining family has gap $D-u$; these two occurrences of $D-u$ cancel. Thus the recurrence transfers a bound for the excess at a smaller pair back to the original pair at a cost of only one additional point.

We next iterate the recurrence. At each step, a small part of $q$ is reserved for the weak estimate~\eqref{eq:one-dimensional-weak-input}, while the rest of $q$ is passed to the preceding induction level. The decrease in the gap is chosen to absorb the error $ap/q$ in the weak estimate.

\begin{lemma}[Polylogarithmic bootstrapping]
\label{lem:one-dimensional-polylog-bootstrap}
There are absolute constants $K,C>0$ with the following property. If $m\ge1$,
\[
q\ge Ka(L+m)
\qquad\text{and}\qquad
q^{m+1}\ge\bigl(Ka(L+m)\bigr)^m p,
\]
then $\Delta_s(p,q)\le Cs$.
\end{lemma}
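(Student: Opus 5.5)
I would prove the lemma by induction on $m$, using the recurrence of Lemma~\ref{lem:one-dimensional-recurrence} at each step. In the recurrence, the second branch is always controlled by the near-diagonal input~\eqref{eq:one-dimensional-near-input}. The first branch is controlled by the induction hypothesis, or at the base of the induction by the weak input~\eqref{eq:one-dimensional-weak-input}. Set $Q=Ka(L+m)$, so the hypotheses read $q\ge Q$ and $q^{m+1}\ge Q^m p$. The target constant is $C=c_1+2$ or similar, independent of $m$. The point of the argument is that each application of the recurrence costs only the additive $1$ in the second branch, and that branch never feeds back into the induction.

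\emph{The second branch.} I would choose $B$ and $D$ so that every pair $(B+D,B+u)$ with $1\le u\le D$ lies in the near-diagonal range, that is, $B+u\ge L(D-u+1)$. It suffices to take $B\ge LD$. Then~\eqref{eq:one-dimensional-near-input} bounds the second branch by $1+c_1s$, uniformly in the induction. We also need $B\le q-2$ and $D\le p-q$. If $p-q$ is already small, say $q\ge L(p-q+1)$, we are done directly by~\eqref{eq:one-dimensional-near-input}, so we may assume $p-q$ is large.

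\emph{The base case $m=1$.} Here $q^2\ge Qp$. Apply the recurrence once with $B=q/2$ (rounded) and $D=\min\{p-q,\lfloor B/L\rfloor\}$.
\begin{itemize}
\item If $D=p-q$, the first branch is a diagonal pair $(p-B-D,q-B)$ with gap $0$. This is again near-diagonal, so its excess is at most $c_1s$.
\item Otherwise $D\approx q/(2L)$. The first branch is then at most $\Delta_s(p',q')-D$ with $q'\approx q/2>L$. By~\eqref{eq:one-dimensional-weak-input} this is at most $1+2ap/q-D$.
\end{itemize}
Since $p\le q^2/(Ka(L+1))$, we get $2ap/q\le 2q/(K(L+1))\le q/(2L)\approx D$ once $K$ is large. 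So the first branch is $O(1)$, and the base case holds.

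\emph{The inductive step.} Assume the claim for $m-1$. Given $(p,q)$ satisfying the hypotheses for $m$, take $B$ to be a fraction $q(1-1/(L+m))$-ish of $q$, so that $q'=q-B$ is slightly smaller than $q$. Take $D$ to be the part of the gap that must be removed so that $p'=p-B-D$ satisfies $q'^{m}\ge (Ka(L+m-1))^{m-1}p'$; if $D$ would exceed $p-q$, we land in the diagonal case as before. The first branch is then at most $\Delta_s(p',q')-D\le Cs$ by induction, since $D\ge0$.

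\emph{The main obstacle.} The difficulty is the bookkeeping that reconciles the two constraints on $D$. We need $D\le B/L$ so that the second branch is near-diagonal, yet $D$ must be large enough to reduce $p$ by the factor required to pass from exponent $m$ to $m-1$. Concretely, we need $p'\lesssim q\cdot p/Q$ roughly, which requires removing almost all of $p$ in a single step of size at most $B/L\le q/L$. This is impossible unless $p$ is at most a polynomial in $q$, and that is exactly what $q^{m+1}\ge Q^m p$ guarantees.

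My first attempt would therefore not do one step per induction level. Instead I would iterate the recurrence many times at a fixed level, each time removing a chunk $B$ with $D=B/L$. This peels $p$ down while $q$ decreases only by $B$ per step, and the first branch carries a $-D$ that telescopes. Summed over all steps, the $D$'s absorb the whole gap, while $q$ shrinks by the factor $1-1/(L+m)$ from level to level. That shrinkage is exactly what the $(L+m)$ in $Q$ pays for, since $\prod_j(1-1/(L+j))$ stays bounded below relative to the ratio $Q_m/Q_{m-1}$. Verifying that these parameters close up is where I expect the real work to be.
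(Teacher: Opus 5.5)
Your base case $m=1$ is sound and is essentially the paper's first step: the weak input handles the first branch and the near-diagonal input handles the second. The inductive step for $m\ge2$ cannot work as designed, however, and neither the hypothesis $q^{m+1}\ge Q^mp$ nor iteration resolves the obstacle you identify. In the first branch of Lemma~\ref{lem:one-dimensional-recurrence} the pair $(p,q)$ becomes $(p-B-D,q-B)$. You require $B\ge LD$ to keep the second branch near-diagonal. Then along any chain of first-branch steps the total decrease of $p$ is at most $(1+1/L)$ times the total decrease of $q$, hence below $2q$, and $\sum_i D_i<q/L$.

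For $m\ge2$ the hypotheses allow $p$ as large as $q^{m+1}/Q^m$, which is much larger than $q$ once $q\gg Q$. Every first-branch pair $(p',q')$ then still has $p'\ge p-2q$ and $q'\le q$, and neither available estimate applies to it. The weak input~\eqref{eq:one-dimensional-weak-input} would need $\sum_i D_i\gtrsim ap/q=a(q/Q)^m$, far more than $q/L$. The level-$(m-1)$ hypothesis $(q')^m\ge(Ka(L+m-1))^{m-1}p'$ would force $q=O(Q)$. So the ``telescoping'' cannot absorb the gap. Removing most of $p$ by steps of total $p$-cost below $2q$ is impossible as soon as $p\gg q$, whatever polynomial bound $p$ satisfies.

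The missing idea is that the branch able to shrink $p$ by a large factor is the second one. Its pair $(B+D,B+u)$ has first parameter $B+D$, which depends on $p$ only through $D$. The paper therefore reverses the roles in the inductive step. It takes $q_0=\lceil q/(8(L+j+1))\rceil$, $B=q-q_0$ and $D=\lceil 4ap/q_0\rceil$. The first branch $(p-B-D,q_0)$ is killed by the weak input, since $1+a(p-B-D)/q_0-D\le1$. The second-branch pair has first parameter $B+D\le q+O(a(L+j)p/q)$ and second parameter at least $B$, so the level-$j$ hypotheses hold and induction is applied there. Your heuristic that $q$ loses a factor $1-O(1/(L+m))$ per level is right; it is the second parameter $B$ of this branch that loses it.

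This second-branch pair is far from near-diagonal, since its gap $D-u$ may be huge, and each level costs an extra $+1$. The induction therefore only gives the auxiliary bound $\Delta_s(p,q)\le C_1a+j$, where $a=c_2sL$ carries an extra factor $L$. A separate final application of the recurrence uses $D=E=\lceil C_1a+m+2\rceil$ and $B=L(E+1)$. Its first branch is bounded via the auxiliary estimate at $j=m$, giving at most $-2$, and its second branch is near-diagonal. This step removes both the growth in $j$ and the factor $L$, giving $c_1s+1$. The argument does not close with an $m$-independent constant carried through the induction, as you propose; it needs the growing auxiliary bound followed by this last step.
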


\begin{proof}
We first prove an auxiliary estimate. There are absolute constants $K_1,C_1>0$ such that, for every integer $j\ge1$,
\begin{equation}
\label{eq:iterated-auxiliary}
\begin{split}
&q\ge K_1^2a(L+j),\qquad
q^{j+1}\ge K_1^j(L+j)^{j-1}a^jLp\\
&\hspace{45mm}\Longrightarrow\quad
\Delta_s(p,q)\le C_1a+j.
\end{split}
\end{equation}
The integer $j$ counts the number of times the weak estimate is recycled. We choose $K_1$ sufficiently large throughout the proof.

Let $r=p-q$. For $j=1$, put $D=\lceil4ap/q\rceil$ and $B=L(D+1)$. The assumptions give
$B\le4aLp/q+2L\le q/2$.

If $r\le D$, then $q\ge B=L(D+1)\ge L(r+1)$, so~\eqref{eq:one-dimensional-near-input} applies.

Suppose that $r>D$. Apply Lemma~\ref{lem:one-dimensional-recurrence}. Since $q-B\ge q/2>L$, the contribution of its first branch is, by~\eqref{eq:one-dimensional-weak-input}, at most
\[
\Delta_s(p-B-D,q-B)-D
\le1+\frac{a(p-B-D)}{q-B}-D
\le1.
\]
For the second branch, the pair $(B+D,B+u)$ has gap $D-u$, and
$B+u\ge L(D-u+1)$. Hence~\eqref{eq:one-dimensional-near-input} gives
$\Delta_s(B+D,B+u)\le c_1s$. The recurrence therefore gives
$\Delta_s(p,q)\le c_1s+1$, proving~\eqref{eq:iterated-auxiliary} for $j=1$.

Assume now that~\eqref{eq:iterated-auxiliary} holds for $j$, and suppose that its hypotheses hold with $j+1$ in place of $j$. Reserve a small part of $q$ by putting
\[
q_0=\left\lceil\frac{q}{8(L+j+1)}\right\rceil,
\qquad
B=q-q_0,
\qquad
D=\left\lceil\frac{4ap}{q_0}\right\rceil.
\]
The role of $D$ is to absorb the error $ap/q_0$ if the recurrence enters its first branch.

If $r\le D$, then the lower bound on $q$ implies $p/q\le2$, provided $K_1$ is sufficiently large. Hence~\eqref{eq:one-dimensional-weak-input} gives
$\Delta_s(p,q)\le1+2a$.

Suppose that $r>D$. Apply Lemma~\ref{lem:one-dimensional-recurrence}. Since $q_0>L$, the contribution of the first branch is at most
\[
\Delta_s(p-B-D,q_0)-D
\le1+\frac{a(p-B-D)}{q_0}-D
\le1.
\]

It remains to consider a second-branch pair $(B+D,B+u)$. We claim that the induction hypothesis applies to this pair. First, $B+u\ge B\ge K_1^2a(L+j)$. For the power condition, the lower bound on $q$ gives
\[
K_1^j(L+j)^{j-1}a^jLB\le\frac{q^{j+1}}{4}.
\]
Moreover, $D=O(a(L+j+1)p/q)$, and the power hypothesis at level $j+1$ gives
\[
K_1^j(L+j)^{j-1}a^jLD\le\frac{q^{j+1}}{4}.
\]
Finally, since $B$ is all but a fraction of order $1/(L+j+1)$ of $q$, we have $B^{j+1}\ge q^{j+1}/2$. Consequently,
\[
K_1^j(L+j)^{j-1}a^jL(B+D)
\le B^{j+1}
\le(B+u)^{j+1}.
\]
Thus~\eqref{eq:iterated-auxiliary} applies and gives
$\Delta_s(B+D,B+u)\le C_1a+j$. The recurrence now yields
$\Delta_s(p,q)\le C_1a+j+1$, completing the induction.

The auxiliary estimate has an error of order $a=O(sL)$. We remove the factor $L$ by one final application of the recurrence. Put
$E=\lceil C_1a+m+2\rceil$ and $B=L(E+1)$.

Choose the constant $K$ in the statement sufficiently large. Then the two hypotheses of the lemma imply $B\le q/2$, and they imply the hypotheses of~\eqref{eq:iterated-auxiliary} with $j=m$ even after $q$ is replaced by $q-B\ge q/2$.

If $r\le E$, then $q\ge B\ge L(r+1)$, and~\eqref{eq:one-dimensional-near-input} applies. Suppose that $r>E$ and apply Lemma~\ref{lem:one-dimensional-recurrence} with $D=E$. By~\eqref{eq:iterated-auxiliary}, the first branch contributes at most
$C_1a+m-E\le-2$.
For $1\le u\le E$, the pair $(B+E,B+u)$ satisfies
$B+u\ge L(E-u+1)$, and hence~\eqref{eq:one-dimensional-near-input} gives
$\Delta_s(B+E,B+u)\le c_1s$.
The recurrence therefore yields $\Delta_s(p,q)\le c_1s+1$, proving the lemma.
\end{proof}

\begin{proof}[A preliminary bound for Theorem~\ref{thm:one-dimensional-polylog}]
Put $m=\max\{1,\lceil\log(ep/s)\rceil\}$. Since $q\le p$, the hypothesis of the theorem is nonvacuous only when $s\le p$. In this range, $L+m=O(\log(ep))$.

By increasing the absolute constant in the hypothesis of the theorem, we may assume that
$q\ge eKa(L+m)$ and $q\ge s$. Since $e^m\ge ep/s$, we have
\[
\bigl(Ka(L+m)\bigr)^mp
\le\left(\frac{q}{e}\right)^mp
\le q^ms
\le q^{m+1}.
\]
Thus Lemma~\ref{lem:one-dimensional-polylog-bootstrap} applies. We have proved that there are absolute constants $A_0,B_0>0$ such that
\begin{equation}
\label{eq:preliminary-polylog-bound}
q\ge A_0s\log(2s)\log(ep)
\quad\Longrightarrow\quad
H_s(p,q)\le p-q+B_0s.
\end{equation}
\end{proof}

It remains to replace the unspecified constant $B_0$ by the explicit value $2$. For $t\ge2$, write $D_s(t)=H_s(t,t)$. Taking $B=t-1$ and $D=1$ in Lemma~\ref{lem:one-dimensional-recurrence} gives, whenever $p>q$ and $2\le t<q$,
\begin{equation}
\label{eq:one-diagonal-cleanup}
\Delta_s(p,q)\le
\max\bigl\{
\Delta_s(p-t,q-t+1)-1,\,
D_s(t)+1
\bigr\}.
\end{equation}
Thus one step either decreases the excess by one, or finishes the argument using a diagonal estimate.

\begin{lemma}[Diagonal scales]
\label{lem:diagonal-scales}
There is an absolute constant $C>0$ such that, for all $s\ge2$ and all integers $e\ge1$, the integer
$t_e=\lceil C(1+s/e)\log(2s)\rceil+2$
satisfies $D_s(t_e)\le2s+e-1$.
\end{lemma}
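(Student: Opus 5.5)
The plan is to apply Zerbib's diagonal estimate (Theorem~\ref{thm:zerbib-tt}) directly at the scale $t=t_e$ and then use integrality. Theorem~\ref{thm:zerbib-tt} gives
\[
D_s(t)<t^{1/(t-1)}s^{t/(t-1)}+s=s\,(ts)^{1/(t-1)}+s .
\]
So it suffices to show $(t_es)^{1/(t_e-1)}\le 1+e/s$. That inequality gives $D_s(t_e)<s(1+e/s)+s=2s+e$, and since $D_s(t_e)$ is an integer, $D_s(t_e)\le 2s+e-1$.

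Taking logarithms, the required inequality is $\ln(t_es)\le (t_e-1)\ln(1+e/s)$. For the right-hand side I will use the elementary bound $\ln(1+x)\ge x/(1+x)$ with $x=e/s$. This gives $\ln(1+e/s)\ge 1/(1+s/e)$. It therefore suffices to prove
\[
(1+s/e)\,\ln(t_es)\le t_e-1 .
\]
By the definition of $t_e$, we have $t_e-1\ge C(1+s/e)\log(2s)$. So it is enough to show $\ln(t_es)\le C\log(2s)$.

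The only real point is the self-referential dependence of $t_e$ on $C$. Since $e\ge1$ and $s\ge2$, we have $1+s/e\le 1+s\le 2s$. Hence
\[
t_e\le 2Cs\log(2s)+3\le C' C s^2 ,
\]
with $C'$ absolute, and so $\ln(t_es)\le \ln(C'C)+3\ln s\le (\ln(C'C)+3)\log(2s)$, using $\log(2s)\ge \ln 2$ to absorb the constant. The condition then reduces to $\ln(C'C)+3\le C$ up to an absolute factor. This holds for every sufficiently large absolute $C$, because the left side grows only logarithmically in $C$.

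I expect this choice of $C$, uniform in $s$ and $e$, to be the main obstacle, though only a mild one. One needs the bound $t_e=O(Cs\log s)$, so that $\ln t_e=O(\log s+\log C)$ is dominated by $C\log(2s)$. Once $C$ is fixed this way, the argument is the one-line substitution into Theorem~\ref{thm:zerbib-tt} described above, followed by rounding down to an integer.
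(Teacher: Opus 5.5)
Your proposal is correct and follows the paper's proof: apply Zerbib's bound at $t=t_e$, show $(t_es)^{1/(t_e-1)}\le 1+e/s$, and round down using integrality. The only differences are cosmetic. The inequality $\ln(1+x)\ge x/(1+x)$ replaces the paper's case split between $e\le s$ and $e\ge s$. You also spell out the uniform choice of $C$ (handling the dependence of $t_e$ on $C$), which the paper compresses into the remark $\log(t_es)=O(\log(2s))$.
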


\begin{proof}
By Theorem~\ref{thm:zerbib-tt}, $D_s(t)<s+s(ts)^{1/(t-1)}$. For the chosen value of $t_e$, we have $\log(t_es)=O(\log(2s))$. Taking the absolute constant $C$ sufficiently large gives
\[
\frac{\log(t_es)}{t_e-1}
\le\frac14\min\left\{1,\frac{e}{s}\right\}.
\]
If $e\le s$, then $\exp(x)\le1+2x$ for $0\le x\le1$, and hence
$(t_es)^{1/(t_e-1)}<1+e/s$.
If $e\ge s$, the same conclusion follows from
$(t_es)^{1/(t_e-1)}\le\exp(1/4)<2\le1+e/s$.
Therefore $D_s(t_e)<2s+e$. Since $D_s(t_e)$ is an integer, it is at most $2s+e-1$.
\end{proof}

\begin{proof}[Completion of the proof of Theorem~\ref{thm:one-dimensional-polylog}]
Let $A_0,B_0$ be the constants in~\eqref{eq:preliminary-polylog-bound}. Put
$M=\max\{\lceil B_0s\rceil,2s+1\}$ and $E=M-(2s+1)$. Thus $E$ is the number of units that must still be removed from the preliminary additive constant.

If $E=0$, the preliminary bound already gives the result. Otherwise, choose $t_e$ as in Lemma~\ref{lem:diagonal-scales} for $1\le e\le E$. Since $E=O(s)$,
$T:=\sum_{e=1}^E(t_e-1)=O(s(\log(2s))^2)$.

After increasing the absolute constant in Theorem~\ref{thm:one-dimensional-polylog}, its hypothesis guarantees
$q-T\ge A_0s\log(2s)\log(ep)$ and $q-T>t_1$.
The scales $t_e$ are nonincreasing in $e$, so every cleanup step below is valid.

Apply~\eqref{eq:one-diagonal-cleanup} successively with
$t_1,t_2,\ldots,t_{\min\{E,p-q\}}$. Each time the first alternative occurs, the excess decreases by one and the second parameter decreases by $t_e-1$.

Suppose that the diagonal alternative occurs at the $j$-th step. The preceding $j-1$ steps have already decreased the excess by $j-1$, and Lemma~\ref{lem:diagonal-scales} gives
\[
\Delta_s(p,q)
\le D_s(t_j)+1-(j-1)
\le2s+1.
\]

If all $E$ cleanup steps use the first alternative, the remaining pair still satisfies the hypothesis of~\eqref{eq:preliminary-polylog-bound}, because its second parameter is at least $q-T$. Its excess is therefore at most $B_0s\le M$. Since the $E$ successful steps decrease the excess by $E$, the original excess is at most
$M-E=2s+1$.

The only remaining possibility is that the gap reaches zero after $p-q<E$ successful steps. The resulting diagonal parameter is at least $q-T>t_1$. Since $D_s(t)$ is nonincreasing in $t$, repeated use of the first alternative gives
$H_s(p,q)\le D_s(t_1)\le2s$.

In all cases, $\Delta_s(p,q)\le2s+1$, or equivalently
$H_s(p,q)\le p-q+2s+1$, as required.
\end{proof}

\begin{remark}[The diagonal barrier]
\label{rem:diagonal-barrier}
The extra $1$ in the bound $p-q+2s+1$ comes from the exceptional subfamily in~\eqref{eq:one-diagonal-cleanup}. Zerbib's estimate approaches $2s$ from above and therefore gives $D_s(t)\le2s$ for sufficiently large $t$, but it does not give $D_s(t)\le2s-1$. Thus an improvement below $p-q+2s+1$ by the same cleanup method would require a stronger diagonal estimate, or an argument that avoids reducing the problem to the diagonal case.
\end{remark}

\section{The colorful fractional Helly theorem}\label{app:colorful-fh}

\begin{proof}[Proof of Theorem~\ref{thm:colorful-fh}]
As in the proof of Lemma~\ref{lem:count}, we may replace the convex components by compact convex subsets while preserving every intersecting colorful tuple.

For each intersecting colorful tuple $I=(F_1,\ldots,F_{d+1})$, let $v_I$ be the lexicographically smallest point of its intersection, and choose in every $F_i$ a component containing $v_I$. By Lemma~\ref{lem:lex}, $d$ of the chosen components already determine $v_I$. We record these components; the certificate omits exactly one color.

Among the at least $\alpha n_1\cdots n_{d+1}$ intersecting tuples, some color $i$ is omitted by at least a $1/(d+1)$ fraction of the certificates. There are at most $s^d\prod_{j\ne i}n_j$ certificates omitting color $i$. Hence one such certificate occurs at least $\alpha n_i/((d+1)s^d)$ times. It determines one point $v$, and every tuple carrying it uses a different set of color $i$ that contains $v$. Thus $v$ belongs to at least the asserted number of members of $\F_i$.
\end{proof}

\begin{remark}
If $\beta_d^{\mathrm{col}}$ is any colorful fractional Helly function for convex sets, then the direct component reduction and Theorem~\ref{thm:colorful-fh} give $\beta_{d,s}^{\mathrm{col}}(\alpha)\ge\max\{\beta_d^{\mathrm{col}}(\alpha/s^{d+1}),\alpha/((d+1)s^d)\}$.
\end{remark}





\end{document}